\tikzset{
>=stealth',
  punktchain/.style={
    rectangle,
    rounded corners,
    draw=black, thick,
^{}    
    minimum height=3em,
    text centered,
    on chain},
  line/.style={draw, thick, <-},
  element/.style={
    tape,
    top color=white,
    bottom color=blue!50!black!60!,
    minimum width=8em,
    draw=blue!40!black!90, very thick,
    text width=10em,
    minimum height=3.5em,
    text centered,
    on chain},
  every join/.style={->, thick,shorten >=1pt},
  decoration={brace},
  tuborg/.style={decorate},
  tubnode/.style={midway, right=2pt},
}
\def\cA{\mathcal{A}}
\def\cB{\mathcal{B}}
\def\cC{\mathcal{C}}
\def\cD{\mathcal{D}}
\def\cE{\mathcal{E}}
\def\cF{\mathcal{F}}
\def\cG{\mathcal{G}}
\def\cK{\mathcal{K}}
\def\cL{\mathcal{L}}
\def\cT{\mathcal{T}}
\newcommand{\mor}[1][]{\xrightarrow{#1}}
\newcommand{\isomor}{\mor[\sim]}
\def\P{\ensuremath{\mathbb{P}}}
\def\Z{\ensuremath{\mathbb{Z}}}
\def\ev{\mathop{\mathrm{ev}}\nolimits}
\def\Ext{\mathop{\mathrm{Ext}}\nolimits}
\def\Hom{\mathop{\mathrm{Hom}}\nolimits}
\def\RHom{\mathop{\mathbf{R}\mathrm{Hom}}\nolimits}
\def\id{\mathop{\mathrm{id}}\nolimits}
\def\Num{\mathop{\mathrm{Num}}\nolimits}
\def\NS{\mathop{\mathrm{NS}}\nolimits}
\def\MG13{\ensuremath{{\mathcal M}_{\Gamma_1(3)}}}
\def\tildeMG13{\ensuremath{\widetilde{\mathcal M}_{\Gamma_1(3)}}}
\def\into{\ensuremath{\hookrightarrow}}
\def\onto{\ensuremath{\twoheadrightarrow}}
\def\Db{\mathrm{D}^{\mathrm{b}}}
\newtheorem*{rep@theorem}{\rep@title}
\newcommand{\newreptheorem}[2]{%
\newenvironment{rep#1}[1]{%
 \def\rep@title{#2 \ref{##1}}%
 \begin{rep@theorem}}%
 {\end{rep@theorem}}}
\newtheorem{Thm}{Theorem}[section]
\newtheorem{Prop}[Thm]{Proposition}
\newtheorem{Lem}[Thm]{Lemma}
\newtheorem{Con}[Thm]{Conjecture}
\newtheorem{thm-int}{Theorem}
\newtheorem{ThmInt}{Theorem}[section]
\theoremstyle{definition}
\newtheorem{Def-s}[Thm]{Definition}
\newtheorem{Def}[Thm]{Definition}
\newtheorem{Set}[Thm]{Setup}
\newtheorem{Rem}[Thm]{Remark}
\newtheorem{Ex}[Thm]{Example}
\numberwithin{equation}{section}
\def\C{\ensuremath{\mathbb{C}}}
\def\P{\ensuremath{\mathbb{P}}}
\def\Z{\ensuremath{\mathbb{Z}}}
\def\AA{\ensuremath{\mathcal A}}
\def\BB{\ensuremath{\mathcal B}}
\def\DD{\ensuremath{\mathcal D}}
\def\EE{\ensuremath{\mathcal E}}
\def\GG{\ensuremath{\mathcal G}}
\def\KK{\ensuremath{\mathcal K}}
\def\LL{\ensuremath{\mathcal L}}
\def\OO{\ensuremath{\mathcal O}}
\def\TT{\ensuremath{\mathcal T}}
\newcommand{\Ku}[1][]{\mathcal{K}u({#1})}
\def\llra{\hbox to 10mm{\rightarrowfill}}
\def\lllra{\hbox to 15mm{\rightarrowfill}}
\def\llla{\hbox to 10mm{\leftarrowfill}}
\def\lllla{\hbox to 15mm{\leftarrowfill}}
\def\K{\mathbb K}
\DeclareMathOperator{\Cone}{Cone}
\def\onto{\ensuremath{\twoheadrightarrow}}
\def\llra{\hbox to 10mm{\rightarrowfill}}
\def\lllra{\hbox to 15mm{\rightarrowfill}}
\def\Ku{\mathcal{K}\!u}
\newcommand{\ignore}[1]{}
\begin{document}
\author{}

\title{A Refined Derived Torelli Theorem for Enriques Surfaces}

\author[C.~Li, H.~Nuer, P.~Stellari, and X.~Zhao]{Chunyi Li, Howard Nuer, Paolo Stellari, and Xiaolei Zhao}

\address{C.L.: Mathematics Institute (WMI), University of Warwick, Coventry,
CV4 7AL, United Kingdom.}
\email{C.Li.25@warwick.ac.uk}
\urladdr{\url{https://sites.google.com/site/chunyili0401/}}

\address{H.N.: Department of Mathematics, Statistics, and Computer Science,
University of Illinois at Chicago,
851 S. Morgan Street
Chicago, IL 60607, USA.}
\email{hnuer@uic.edu}
\urladdr{\url{https://sites.google.com/site/howardnuermath/home}}

\address{P.S.: Dipartimento di Matematica ``F.~Enriques'', Universit{\`a} degli Studi di Milano, Via Cesare Saldini 50, 20133 Milano, Italy.}
\email{paolo.stellari@unimi.it}
\urladdr{\url{https://sites.unimi.it/stellari}}

\address{X.Z.: Department of Mathematics,
University of California, Santa Barbara,
6705 South Hall
Santa Barbara, CA 93106, USA.}
\email{xlzhao@math.ucsb.edu}
\urladdr{\url{https://sites.google.com/site/xiaoleizhaoswebsite/}}

\thanks{C.~L.~ was an Early Career Fellow supported by the Leverhulme Trust, and a University Research Fellow supported by the Royal Society. H.~N.~was partially supported by the NSF postdoctoral fellowship DMS-1606283, by the NSF RTG grant DMS-1246844, and by the NSF FRG grant DMS-1664215.
P.~S.~was partially supported by the ERC Consolidator Grant ERC-2017-CoG-771507-StabCondEn, by the research projects PRIN 2017 ``Moduli and Lie Theory'' and FARE 2018 HighCaSt (grant number R18YA3ESPJ), and by the International Associated Laboratory LIA LYSM (in cooperation with AMU, CNRS, ECM and INdAM). X.~Z.~ was partially supported by the Simons Collaborative Grant 636187.}

\keywords{Enriques surfaces, derived categories, Torelli theorem}

\subjclass[2010]{18E30, 14J28, 14F05}

\begin{abstract}
We prove that two general Enriques surfaces defined over an algebraically closed field of characteristic different from $2$ are isomorphic if their Kuznetsov components are equivalent. We apply the same techniques to give a new simple proof of a conjecture by Ingalls and Kuznetsov relating the derived categories of the blow-up of general Artin\textendash Mumford quartic double solids and of the associated Enriques surfaces.

This paper originated from one of the problem sections at the workshop \emph{Semiorthogonal decompositions, stability conditions and sheaves of categories}, Universit\'e de Toulouse, May 2--5, 2018. 
\end{abstract}

\maketitle

\setcounter{tocdepth}{1}
\tableofcontents

\section{Introduction}

An \emph{Enriques surface} is a smooth projective surface $X$ with $2$-torsion dualizing sheaf $\omega_X$ and such that $H^1(X,\OO_X)=0$. In this paper, we assume that all varieties are defined over an algebraically closed field $\K$ of characteristic different from $2$. Under this additional assumption, the above definition is equivalent to asking that $X$ is the quotient of a K3 surface by a fixed-point-free involution.

Because of their construction, Enriques surfaces inherit certain properties from their K3 cousins.  Of interest to us here is an important Hodge theoretic feature that they share: their period maps are injective. In other words, in characteristic zero, Hodge-theoretic Torelli theorems hold for both Enriques and K3 surfaces. More precisely, the formulation of the Torelli theorem for Enriques surfaces involves the weight-$2$ Hodge structure on the second integral cohomology group of the K3 surfaces which are their double covers. 

On the other hand, the bounded derived category of coherent sheaves $\Db(X)$ has very different behaviour depending on whether $X$ is a K3 surface or an Enriques surface. Indeed, in the first case, there might be numerous (albeit finitely many) non-isomorphic K3 surfaces with derived category equivalent to $\Db(X)$ (see \cite{Og,St} in characteristic zero and \cite{LM} in positive characteristic).  Moreover, $\Db(X)$ is indecomposable, i.e.\ it does not contain proper non-trivial admissible subcategories.

When $X$ is an Enriques surface, however, $\Db(X)$ uniquely determines $X$ up to isomorphism (see \cite{BM01} in characteristic zero and \cite{HLT17} for the fields $\K$ of odd positive characteristic).  In analogy to the Hodge theoretic result, we refer to this fact as the \emph{Derived Torelli Theorem} for Enriques surfaces: two Enriques surfaces $X_1$ and $X_2$, defined over a field $\K$ as above, are isomorphic if and only if $\Db(X_1)\cong\Db(X_2)$.  Moreover, we will recall in Section \ref{subsec:Enriques} that for an Enriques surface $X$ there is a collection $\cL:=\{ L_1,\dots,L_{10}\}$ of $10$ exceptional bundles. If $X$ is generic, then it was shown in \cite{Zu} that there is a collection $\cL$ satisfying the additional property that there exists an isomorphism of graded vector spaces
\begin{equation}\label{cond} \tag{$\star$}
\RHom(L_i,L_j)=\K^{\delta_{ij}}.
\end{equation}
In other words, the set $\LL$ is a completely orthogonal exceptional collection.  We then define the \emph{Kuznetsov component} of $X$ to be the admissible subcategory $\Ku(X,\cL):=\langle\LL\rangle^{\perp}$, giving the (non-trivial) semiorthogonal decomposition 
$$\Db(X)=\langle \Ku(X,\cL), \cL \rangle.$$

This decomposition and the properties of $\Ku(X,\cL)$ have been extensively studied in \cite{KI15,Kuz18}. The aim of this paper is to investigate further how much of the geometry of $X$ is encoded by $\Ku(X,\cL)$. The following result should be thought of as a \emph{refined} version of the Derived Torelli Theorem for Enriques surfaces mentioned above.

\begin{ThmInt}\label{thm:derived_torreli}
Let $X_1$ and $X_2$ be Enriques surfaces. Assume that, for $i=1,2$, there exist semiorthogonal decompositions
\begin{equation*}\label{cond2}
\Db(X_i)=\langle \Ku(X_i,\cL_i),\cL_i\rangle
\end{equation*}
and an exact equivalence $\mathsf{F}\colon \Ku(X_1,\cL_1)\xrightarrow{\sim}\Ku(X_2,\cL_2)$ of Fourier\textendash Mukai type, where $\LL_i$ is a collection of 10 bundles satisfying \eqref{cond}. Then $X_1\cong X_2$.
\end{ThmInt}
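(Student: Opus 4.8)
The strategy is to bootstrap the equivalence of Kuznetsov components into an equivalence of full derived categories, and then invoke the Derived Torelli Theorem for Enriques surfaces. The starting point is the Fourier–Mukai equivalence $\mathsf{F}\colon\Ku(X_1,\cL_1)\xrightarrow{\sim}\Ku(X_2,\cL_2)$. Since $\mathsf F$ is of Fourier–Mukai type, it has a well-behaved action on (numerical, or topological) Mukai lattices, and—crucially—Serre functors are preserved. The Serre functor of $\Ku(X_i,\cL_i)$ is not the shift $[2]$ but rather a nontrivial autoequivalence whose structure (studied in \cite{KI15,Kuz18}) closely remembers the torsion canonical bundle $\omega_{X_i}$; understanding it is what makes $\Ku(X,\cL)$ richer than a K3-type category. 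The plan is to extract from $\mathsf F$ a $\K$-linear exact equivalence $\widetilde{\mathsf F}\colon\Db(X_1)\xrightarrow{\sim}\Db(X_2)$ by showing that $\mathsf F$ sends the (right or left) mutated image of the exceptional collection $\cL_1$ to something that, after composing with autoequivalences of $\Db(X_2)$, becomes a genuine completely orthogonal exceptional collection of line bundles on $X_2$ of the type appearing in some semiorthogonal decomposition $\Db(X_2)=\langle\Ku(X_2,\cL_2'),\cL_2'\rangle$.

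The key steps, in order, are as follows. First, I would establish that the numerical Grothendieck group of $\Ku(X_i,\cL_i)$, together with its Euler pairing and the distinguished class structure coming from the Serre functor, is a lattice-theoretic invariant strong enough to pin down the relevant data; this is where one uses that $X_i$ is Enriques (not K3), so that $K_{\mathrm{num}}(\Ku(X_i,\cL_i))$ carries $2$-torsion reflecting $\omega_{X_i}$. Second, using the Fourier–Mukai kernel of $\mathsf F$ and the gluing of $\Db(X_i)$ from $\Ku(X_i,\cL_i)$ and $\langle\cL_i\rangle$, I would produce a Fourier–Mukai functor $\Db(X_1)\to\Db(X_2)$: one extends $\mathsf F$ by choosing, on the $\langle\cL_1\rangle$ side, an appropriate functor to an exceptional collection in $\Db(X_2)$, and checks that the two pieces glue to a fully faithful functor (using that there are no Homs in the wrong direction between the two semiorthogonal pieces, plus a standard criterion for gluing Fourier–Mukai functors). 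Third, having obtained a fully faithful Fourier–Mukai functor between $\Db$ of two smooth projective varieties of the same dimension with (torsion) canonical bundles, I would invoke the standard fact that it is automatically an equivalence. Fourth and finally, apply the Derived Torelli Theorem for Enriques surfaces (\cite{BM01} in characteristic zero, \cite{HLT17} in odd positive characteristic) to conclude $X_1\cong X_2$.

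The main obstacle—and the technical heart of the argument—is the second step: controlling the image under $\mathsf F$ of the mutated exceptional objects, or equivalently showing that the semiorthogonal decomposition of $\Db(X_1)$ can be transported across $\mathsf F$ rather than just the piece $\Ku(X_1,\cL_1)$ alone. A priori $\mathsf F$ knows nothing about $\cL_1$; the decomposition $\Db(X)=\langle\Ku(X,\cL),\cL\rangle$ is not unique, and different choices of $\cL$ can be related by autoequivalences, so one must argue that the data recorded by the abstract category $\Ku(X_i,\cL_i)$ (its Serre functor, its numerical invariants, perhaps the set of its ``spherical-like'' or ``$(-1)$-classes'' objects) forces the existence of \emph{some} orthogonal exceptional collection realizing it as a Kuznetsov component of $X_2$, and moreover that the resulting reconstruction of $\Db(X_2)$ is compatible with $\mathsf F$. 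I would expect to handle this by a careful lattice/Mukai-vector computation identifying the classes that can be realized by exceptional line bundles, together with a deformation or genericity argument (using the hypothesis that the $\cL_i$ satisfy the strong orthogonality \eqref{cond}, which by \cite{Zu} holds for generic $X$) to rigidify the choices. Once that compatibility is in place, the remaining steps are essentially formal.
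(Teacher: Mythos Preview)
Your overall architecture matches the paper's: extend the equivalence $\mathsf{F}$ from the Kuznetsov components to an equivalence $\Db(X_1)\cong\Db(X_2)$, then invoke the Derived Torelli Theorem for Enriques surfaces. That final reduction is exactly what the paper does.

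The gap is in your second step, which you yourself flag as the technical heart. Your proposed tools---numerical Grothendieck groups, lattice/Mukai-vector computations, a deformation or genericity argument---are not what makes the extension go through, and it is not clear they could. A lattice argument tells you at best where the \emph{classes} of the exceptional objects should land, not which \emph{objects} realize them; and a deformation argument would be circular, since you would need to know the result in some case to propagate it. (Your aside about $2$-torsion in $K_{\mathrm{num}}(\Ku(X,\cL))$ is also off: the numerical Grothendieck group is torsion-free; the relevant structure is that the Serre functor of $\Ku(X,\cL)$ satisfies $\mathsf{S}^2\cong[4]$, which is categorical rather than numerical.)

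What the paper actually does is classify, up to shift, the $3$-spherical objects in $\Ku(X,\cL)$: there are exactly ten, namely the projections $S_i:=\kappa^!(L_i)$ of the exceptional line bundles into $\Ku(X,\cL)$. This is an intrinsic, object-level statement about $\Ku(X,\cL)$, so any equivalence $\mathsf{F}$ must permute the $S_i$ (up to shift). Since $S_i^j\cong\kappa_j^!(L_i^j)$ on both sides, this gives precisely the compatibility $\mathsf{F}(\kappa_1^!(L_i^1))\cong\kappa_2^!(L_i^2)$ needed to glue $\mathsf{F}$ with the assignment $L_i^1\mapsto L_i^2$, one exceptional object at a time, via an explicit Fourier--Mukai extension lemma. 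You gesture at ``spherical-like'' objects in passing, and that is indeed the right thread; but it has to be pulled all the way: prove that the $S_i$ are $3$-spherical, prove there are no others, and then use this to drive the extension. Without that classification, the extension step in your outline has no mechanism.
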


The terminology used above will be clarified in Section \ref{sec:SOEnriques} while the proof of this result will be carried out in Section \ref{subsec:mainthm1}. More precisely, the result is  an easy consequence of the more general Theorem \ref{thm:gen}.

The essence of the proof is in showing that the equivalence $\mathsf{F}$ can be extended to an equivalence $\Db(X_1)\cong\Db(X_2)$ by means of Propositions \ref{prop:extension} and \ref{prop:extension2} and by adding step by step the $10$ exceptional objects. Then the Derived Torelli Theorem implies our result. The way we obtain the latter global equivalence is by studying and classifying the so-called $3$-spherical objects in $\Ku(X_i,\cL_i)$. This is done in Section \ref{sec:3spherical}. Note that the reason why we have to assume that the characteristic of $\K$ is not $2$ is because the Derived Torelli Theorem for Enriques surfaces is only known to hold under this additional assumption, but the rest of our argument applies even in characteristic $2$ for \emph{classical} Enriques surfaces (see \cite{CD89}).

It is clear that Theorem \ref{thm:derived_torreli} has a trivial converse that, for the sake of completeness, we include in Theorem \ref{thm:derived_torreligen}.

\smallskip

We develop the techniques we use to prove Theorem \ref{thm:derived_torreli} in sufficient generality to give a short proof of a very interesting conjecture by Ingalls and Kuznetsov (see Conjecture \ref{conj:IK}). This will be explained in detail in Section \ref{subsec:AMdoublesolids}, so we content ourselves here with a short summary.

Recall that a general Artin\textendash Mumford quartic double solid is a double cover of the three dimensional projective space, ramified over a quartic symmetroid surface with $10$ nodes, which is the degeneracy locus of a web of quadrics in $\mathbb{P}^3$. The $10$ singular points correspond to quadrics of corank $2$ in the family.

Consider the blow-up $Y'$ of a general Artin\textendash Mumford double solid at its $10$ singular points. It is a classical observation that $Y'$ has an associated Enriques surface $X$ obtained as a quotient of the (desingularization) of the quartic symmetroid. It turns out that $\Db(X)$ has a semiorthogonal decomposition $\Db(X)=\langle\Ku(X,\cL),\cL\rangle$ where $\cL=\{L_1,\dots,L_{10}\}$ satisfies \eqref{cond}.

The main result of \cite{KI15} shows that there is a semiorthogonal decomposition $\Db(Y')=\langle\AA_{Y'},\BB_{Y'}\rangle$, where $\BB_{Y'}$ consists of $12$ exceptional objects. The component $\AA_{Y'}$ has a completely orthogonal exceptional collection $\cG=\{G_1,\dots,G_{10}\}$, and defining $\Ku(Y',\GG):=\langle\GG\rangle^\perp$ gives the semiorthogonal decomposition
\[
\AA_{Y'}=\langle\Ku(Y',\cG),\cG\rangle.
\]
Most importantly, there is an exact equivalence $\Ku(Y',\cG)\cong\Ku(X,\LL)$ of Fourier\textendash  Mukai type (see Theorem \ref{thm:IK}).

The following is our second main result.

\begin{ThmInt}\label{thm:IKConj}
Let $Y'$ be the blow-up of a general Artin\textendash Mumford double solid at its $10$ singular points. Let $X$ be its associated Enriques surface. Then there is an exact equivalence $$\Db(X)\cong\AA_{Y'}$$ which is of Fourier\textendash Mukai type.
\end{ThmInt}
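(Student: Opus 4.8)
The plan is to recognize Theorem~\ref{thm:IKConj} as an instance of the general extension statement Theorem~\ref{thm:gen}, with the equivalence of Kuznetsov components now provided by the Ingalls--Kuznetsov theorem (Theorem~\ref{thm:IK}) rather than being assumed. Concretely, Theorem~\ref{thm:IK} gives an exact Fourier--Mukai equivalence $\mathsf{F}\colon\Ku(Y',\cG)\xrightarrow{\sim}\Ku(X,\cL)$, and by construction the two ambient categories are obtained from these Kuznetsov components by adjoining a completely orthogonal exceptional collection of length $10$:
\[
\AA_{Y'}=\langle\Ku(Y',\cG),\cG\rangle,\qquad \Db(X)=\langle\Ku(X,\cL),\cL\rangle,
\]
with $\cG=\{G_1,\dots,G_{10}\}$ and $\cL=\{L_1,\dots,L_{10}\}$. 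Thus the whole content of the theorem is that $\mathsf F$ extends to an exact, Fourier--Mukai equivalence $\AA_{Y'}\xrightarrow{\sim}\Db(X)$; note that, unlike in the proof of Theorem~\ref{thm:derived_torreli}, no appeal to the classical Derived Torelli Theorem is required here, since the target of the extended functor is exactly the category $\Db(X)$ that we want.

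To produce the extension I would add the ten exceptional objects one at a time, via Propositions~\ref{prop:extension} and~\ref{prop:extension2}, exactly as in the proof of Theorem~\ref{thm:derived_torreli}. What makes each step go through is the classification of $3$-spherical objects in $\Ku(X,\cL)$ carried out in Section~\ref{sec:3spherical}: the semiorthogonal decomposition $\AA_{Y'}=\langle\Ku(Y',\cG),\cG\rangle$ determines a family $\widetilde{G}_1,\dots,\widetilde{G}_{10}$ of pairwise completely orthogonal $3$-spherical objects of $\Ku(Y',\cG)$ canonically associated with $\cG$ (and likewise $\cL$ determines $\widetilde{L}_1,\dots,\widetilde{L}_{10}$ in $\Ku(X,\cL)$), the equivalence $\mathsf F$ sends the $\widetilde{G}_i$ to ten pairwise completely orthogonal $3$-spherical objects of $\Ku(X,\cL)$, and by the classification the latter agree --- up to shift and up to the group of autoequivalences of $\Ku(X,\cL)$ generated by the relevant spherical twists --- with the $\widetilde{L}_j$. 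After composing $\mathsf F$ with such an autoequivalence and adjusting the $L_j$ by shifts (which changes neither the orthogonality relations nor the subcategory $\langle\Ku(X,\cL),L_1,\dots,L_{10}\rangle=\Db(X)$), the hypotheses of Propositions~\ref{prop:extension} and~\ref{prop:extension2} are met, and the induction produces an exact equivalence $\AA_{Y'}\xrightarrow{\sim}\Db(X)$.

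It remains to check that this equivalence is of Fourier--Mukai type, and this is inherited step by step: $\mathsf F$ is Fourier--Mukai by Theorem~\ref{thm:IK}; the spherical twists and mutation-type functors used to set up and perform the induction are Fourier--Mukai; the extensions furnished by Propositions~\ref{prop:extension} and~\ref{prop:extension2} are Fourier--Mukai because $\AA_{Y'}$ and $\Db(X)$ are admissible in the derived categories of the smooth projective varieties $Y'$ and $X$, so that the standard existence and uniqueness results for Fourier--Mukai kernels of functors defined on admissible subcategories apply; and a composition of Fourier--Mukai functors is again Fourier--Mukai. This yields the statement.

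I expect the main obstacle to be the applicability of the $3$-spherical classification of Section~\ref{sec:3spherical} to the Enriques surface $X$ attached to a \emph{general} Artin--Mumford double solid. Such an $X$ does carry a completely orthogonal exceptional collection of $10$ line bundles, so that \eqref{cond} holds, but it need not be generic among all Enriques surfaces; hence one must ensure that the classification --- together with the description of the autoequivalences needed to normalize $\mathsf F$ --- is established for \emph{every} Enriques surface satisfying \eqref{cond}, and that the ten images $\mathsf F(\widetilde{G}_i)$ can be normalized to match the $\widetilde{L}_j$ \emph{simultaneously} without breaking the exceptional-collection structure. A secondary, more technical, point is the careful propagation of Fourier--Mukai kernels when passing between the component $\AA_{Y'}$ of the derived category of the threefold $Y'$ and the derived category of the surface $X$.
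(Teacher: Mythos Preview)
Your approach is essentially the paper's own: Theorem~\ref{thm:IKConj} follows at once from Theorem~\ref{thm:gen} applied with $Z_1=X$, $Z_2=Y'$, $\cA_{Z_1}=\Db(X)$, $\cA_{Z_2}=\cA_{Y'}$, and the input equivalence supplied by Theorem~\ref{thm:IK}(2).

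One correction: your normalization step invokes ``the group of autoequivalences of $\Ku(X,\cL)$ generated by the relevant spherical twists'', but this is unnecessary and misreads the strength of Proposition~\ref{prop:10Ts}. That proposition shows that \emph{every} $3$-spherical object in the Kuznetsov component is already isomorphic to some $S_i[k]$; hence $\mathsf{F}(S_i^1)\cong S_{\sigma(i)}^2[k_i]$ for a permutation $\sigma$ and integers $k_i$, and reordering and shifting the $L_j^2$ is all that is required---no composition with spherical twists. Relatedly, the obstacles you anticipate dissolve: Proposition~\ref{prop:10Ts} is proved in the generality of Setup~\ref{setup}, and Example~\ref{ex:EnriquesCat} records that both $\Db(X)$ (for any Enriques surface satisfying \eqref{cond}, generic or not) and $\cA_{Y'}$ fall under it, so no special argument for the Artin--Mumford Enriques surfaces is needed and the simultaneous matching of all ten spherical objects is automatic.
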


This statement, which is precisely the content of Conjecture \ref{conj:IK} of Ingalls and Kuznetsov \cite{KI15}, was previously proved by Hosono and Takagi \cite{HT} using an intricate argument depending on Homological Projective Duality. Our more precise Theorem \ref{thm:IKConjgen} in Section \ref{subsec:mainthm1} implies Theorem \ref{thm:IKConj} and, given its elementary proof, it provides a simpler proof of this conjecture.

\smallskip

This paper originates from the circle of ideas that stems out of \cite{BMMS}. Indeed, if $Y_1$ and $Y_2$ are cubic threefolds (i.e.\ smooth degree $3$ hypersurfaces in the complex projective space $\P^4$) and $H_i$ is the class of a hyperplane section in Pic$(Y_i)$, then we have semiorthogonal decompositions
\[
\Db(Y_i)=\langle\Ku(Y_i),\OO_{Y_i},\OO_{Y_i}(H_i)\rangle.
\] 
The main result in \cite{BMMS} shows that $Y_1\cong Y_2$ if and only if there is an exact equivalence $\Ku(Y_1)\cong\Ku(Y_2)$. To make the analogy with the paper \cite{BMMS} tighter, we should also mention cubic threefolds and Artin\textendash Mumfold double solids are both Fano threefolds of index $2$ and their derived categories admit very similar semiorthogonal decompositions (see Corollary 3.5 in \cite{KI15}), even though Artin\textendash Mumfold double solids are singular.

If we increase the dimension of the hypersurfaces by one and we consider two cubic fourfolds $W_1$ and $W_2$, then we get semiorthogonal decompositions similar to the one above but with additional exceptional objects $\OO_{W_i}(2H_i)$. In this case, the admissible subcategories $\Ku(W_i)$ behave like the derived category of a K3 surface. Thus, in view of the discussion above, we cannot expect that $W_1\cong W_2$ if and only if there is an exact equivalence $\Ku(W_1)\cong\Ku(W_2)$. And in fact, Huybrechts and Rennemo proved in \cite{HR} that such a statement needs an adjustment: the equivalence $\Ku(W_1)\cong\Ku(W_2)$ has to satisfy some additional and natural compatibility. Contrary to the approach we use in the present paper, the proofs of categorical Torelli for cubic threefolds in \cite{BMMS}, and for cubic fourfolds in \cite[Appendix]{BLMS} and \cite{LPZ}, all make use of Bridgeland stability conditions.

In conclusion, it is worth pointing out that arbitrary semiorthogonal decompositions are in general non-canonical. However, Theorem \ref{thm:derived_torreli} is further evidence of the fact that, when these decompositions originate from geometry, they usually encode important pieces of information.

\section{Semiorthogonal decompositions and an extension result}\label{sec:SOEnriques}

In this section, we briefly recall some basic facts about semiorthogonal decompositions. We also prove an extension result for Fourier\textendash Mukai functors of independent interest and which will be important in this paper. 

\subsection{Generalities}\label{subsec:gen}
In complete generality, let $\cT$ be a triangulated category. A \emph{semiorthogonal decomposition} 
	\begin{equation*}
	\cT = \langle \cD_1, \dots, \cD_m \rangle
	\end{equation*}
	is a sequence of full triangulated subcategories $\cD_1, \dots, \cD_m$ of $\cT$ such that: 
	\begin{enumerate}
		\item $\Hom(F, G) = 0$, for all $F \in \cD_i$, $G \in \cD_j$ and $i>j$;	\item For any $F \in \cT$, there is a sequence of morphisms
		\begin{equation*}  
		0 = F_m \to F_{m-1} \to \cdots \to F_1 \to F_0 = F,
		\end{equation*}
		such that $\pi_i(F):=\mathrm{Cone}(F_i \to F_{i-1}) \in \cD_i$ for $1 \leq i \leq m$. 
	\end{enumerate}
The subcategories $\cD_i$ are called the \emph{components} of the decomposition.

The condition (a) implies that the factors $\pi_i(F)$ in (b) are uniquely determined and functorial. Hence, for all $i=1,\ldots,m$, one can define the \emph{$i$-th projection functor}  $\pi_i\colon\cT\to\cD_i$ for all $F\in\cT$. 

Denote by $\alpha_i\colon\cD_i\hookrightarrow\cT$ the inclusion. We say that $\cD_i$ is \emph{admissible} if $\alpha_i$ has left adjoint $\alpha_{i}^*$ and right adjoint $\alpha_{i}^!$.

Let $\cT_1=\langle\cD_1^1,\cD_2^1\rangle$ and $\cT_2=\langle\cD_1^2,\cD_2^2\rangle$ be two triangulated categories with semiorthogonal decompositions by admissible subcategories $\alpha_{ij}\colon\cD_i^j\hookrightarrow\cT_j$. Following \cite[Section 2.2]{KL}, we can define the \emph{gluing functor}\footnote{Note that our gluing functor differs from the one in \cite{KL} by the shift by $1$. This is harmless and it makes the rest of the discussion easier.}
\[
\Psi_j:=\alpha_{1j}^!\circ\alpha_{2j}\colon\cD^j_2\to\cD_1^j.
\]
Moreover, if $B\in\cD_2^1$, then we denote by $\eta_B\colon\alpha_{11}\Psi_1(B)\to \alpha_{21}(B)$ the counit of adjunction.

The following result will be useful later.

\begin{Lem}\label{lem:crit}
For $i=1,2$, let $\cT_i=\langle\cD_1^i,\cD_2^i\rangle$ be a triangulated category with a semiorthogonal decomposition by admissible subcategories. Let $\mathsf{F}\colon\cT_1\to\cT_2$ be an exact functor with left and right adjoints and such that
\begin{enumerate}
\item[{\rm (a)}]  $\mathsf{F}(\cD^1_j)\subseteq\cD^2_j$ and the induced functors  $\mathsf{F}_j:=\mathsf{F}|_{\cD_j^1}\colon \cD_j^1\to \cD_j^2$ are equivalences for $j=1,2$.
\item[{\rm (b)}] The morphism $\mathsf{F}(\eta_B)$ induces an isomorphism
\[
\xymatrix{
	\Hom\left(\mathsf{F}(\alpha_{11}(A)),\mathsf{F}(\alpha_{11}\Psi_1(B))\right)\ar[rr]^-{\mathsf{F}(\eta_B)\circ-}&&\Hom(\mathsf{F}(\alpha_{11}(A)),\mathsf{F}(\alpha_{21}(B))),
}
\]
for all $A\in\cD_1^1$ and all $B\in\cD_2^1$.
\end{enumerate}
Then $\mathsf{F}$ is an equivalence.
\end{Lem}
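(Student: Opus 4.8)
The idea is to show that $\mathsf{F}$ is fully faithful, and then invoke the fact that a fully faithful exact functor between triangulated categories which admits both adjoints and is essentially surjective (or, more precisely, whose image generates) is an equivalence. Actually, the cleanest route is: prove fully faithfulness, then observe that $\mathsf{F}$ has a left adjoint, so the image of $\mathsf{F}$ is an admissible (hence thick) subcategory of $\cT_2$; since $\mathsf{F}_1$ and $\mathsf{F}_2$ are already equivalences onto $\cD_1^2$ and $\cD_2^2$, the image contains both components and therefore all of $\cT_2$, forcing $\mathsf{F}$ to be an equivalence. So the whole content is in checking that
\[
\mathsf{F}\colon \Hom_{\cT_1}(E,E') \to \Hom_{\cT_2}(\mathsf{F}(E),\mathsf{F}(E'))
\]
is an isomorphism for all $E, E' \in \cT_1$.

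To do this I would use the semiorthogonal decomposition $\cT_1 = \langle \cD_1^1, \cD_2^1\rangle$ to reduce to cases. Writing the decomposition triangles for $E$ and $E'$ (with pieces $A := \pi_1(E') \in \cD_1^1$ and $B := \pi_2(E') \in \cD_2^1$, and similarly for $E$), and using the five lemma / long exact sequences obtained by applying $\Hom(E, -)$ and $\Hom(-, \mathsf{F}(E'))$, fully faithfulness on all of $\cT_1$ follows once one knows it on the four pairs of components. Two of these, $\Hom(A, A')$ with $A, A' \in \cD_1^1$ and $\Hom(B, B')$ with $B, B' \in \cD_2^1$, are immediate from hypothesis (a) since $\mathsf{F}_1, \mathsf{F}_2$ are equivalences. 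The pair with $E \in \cD_1^1$, $E' \in \cD_2^1$, i.e.\ $\Hom_{\cT_1}(\alpha_{11}(A), \alpha_{21}(B))$ — wait, here I should be careful about which direction has the semiorthogonality vanishing. One of the two mixed pairs vanishes on both sides by semiorthogonality (the Hom from $\cD_2^1$ to $\cD_1^1$ is zero, and likewise in $\cT_2$ because $\mathsf{F}$ respects the decompositions by (a)), so there is nothing to check. The remaining mixed pair, $\Hom_{\cT_1}(\alpha_{11}(A), \alpha_{21}(B))$ for $A \in \cD_1^1$, $B \in \cD_2^1$, is exactly where hypothesis (b) enters.

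For that last pair, the key computation is the identification
\[
\Hom_{\cT_1}(\alpha_{11}(A), \alpha_{21}(B)) \cong \Hom_{\cD_1^1}(A, \Psi_1(B)),
\]
which is just the adjunction defining $\Psi_1 = \alpha_{11}^! \circ \alpha_{21}$, with the isomorphism realized by composition with the counit $\eta_B$. Applying $\mathsf{F}$ and using that $\mathsf{F}_1$ is an equivalence on $\cD_1^1$, the source becomes $\Hom(\mathsf{F}(\alpha_{11}(A)), \mathsf{F}(\alpha_{11}\Psi_1(B)))$, and hypothesis (b) says precisely that post-composition with $\mathsf{F}(\eta_B)$ sends this isomorphically to $\Hom(\mathsf{F}(\alpha_{11}(A)), \mathsf{F}(\alpha_{21}(B)))$. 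Chasing the naturality square that expresses compatibility of $\mathsf{F}$ with the counit then shows the composite agrees with the map induced by $\mathsf{F}$ on $\Hom_{\cT_1}(\alpha_{11}(A), \alpha_{21}(B))$, so $\mathsf{F}$ is an isomorphism on this pair too.

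The main obstacle, and the step requiring the most care, is the bookkeeping in the last paragraph: making sure that the isomorphism $\Hom_{\cT_1}(\alpha_{11}(A), \alpha_{21}(B)) \cong \Hom(A, \Psi_1(B))$ is genuinely compatible — via the counit $\eta_B$ — with the map $\mathsf{F}$ induces on morphism spaces, so that hypothesis (b) can be applied verbatim rather than to some a priori different map. A secondary point is to be careful that the dévissage via the decomposition triangles really does reduce fully faithfulness to the four component-pairs (one needs both adjoints of $\mathsf{F}$ to guarantee $\mathsf{F}$ commutes with the relevant exact sequences, which is why they are in the hypotheses), and finally that admissibility of the image follows from the existence of a left adjoint to $\mathsf{F}$ — standard, but worth stating. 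Once these are in place the conclusion $X_1 \cong X_2$-style generating argument finishes the proof.
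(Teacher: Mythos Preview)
Your proposal is correct and follows essentially the same route as the paper: reduce fully faithfulness to the four component-pairs, dispatch three of them via hypothesis (a) and semiorthogonality, and handle the remaining mixed pair $\Hom(\alpha_{11}(A),\alpha_{21}(B))$ by the adjunction identification with $\Hom(A,\Psi_1(B))$ together with hypothesis (b); then conclude essential surjectivity because the image is triangulated and contains both $\cD_1^2$ and $\cD_2^2$. The only cosmetic difference is that the paper packages your d\'evissage step by citing Bridgeland's spanning-class criterion \cite[Theorem~2.3]{BrEq99} (this is where the adjoints are actually used---not, as you suggest, to make $\mathsf{F}$ commute with exact sequences, which follows from exactness alone).
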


To help the reader to keep track of the numerous indices in our notation for categories and functors, we include here a diagram which summarizes them:
\[
\xymatrix{
\cD_1^1\ar[r]^{\alpha_{11}}\ar[d]_{\mathsf{F}_1} & \cT_1\ar[d]^{\mathsf{F}} & \cD_2^1\ar[l]_{\alpha_{21}}\ar[d]^{\mathsf{F}_2}\\
\cD_1^2\ar[r]^{\alpha_{12}} & \cT_2 & \cD_2^2\ar[l]_{\alpha_{22}}.}
\]

\begin{proof}
The objects of $\alpha_{11}(\cD_1^1)$ and $\alpha_{21}(\cD_2^1)$ form a spanning class $\Omega$ for $ \cT_1$ in the sense of \cite{BrEq99}. Hence, by \cite[Theorem 2.3]{BrEq99}, to prove that $\mathsf{F}$ is fully faithful it is enough to show that, for any $A, B \in \Omega$, the natural morphism
\[
\Hom(A,B)\to\Hom(\mathsf{F}(A),\mathsf{F}(B))
\]
induced by $\mathsf{F}$ is bijective.

Since $\mathsf{F}_j$ is an equivalence by (a) and the categories $\cD_1^1$, $\cD_2^1$ are semiorthogonal in $\cT_1$ while $\cD_1^2$, $\cD_2^2$ are semiorthogonal in $\cT_2$, it follows from the definition of a semiorthogonal decomposition that we need only verify that the morphism
\[
\Hom(\alpha_{11}(A),\alpha_{21}(B))\to\Hom(\alpha_{12}\mathsf{F}_1(A),\alpha_{22}\mathsf{F}_2(B))
\]
induced by $\mathsf{F}$ is bijective, for all $A\in\cD_1^1$ and $B\in\cD_2^1$. To this extent, consider the commutative diagram
\begin{equation*}\label{eqn:diagrcomm}
\xymatrix@C-10pt{
	\Hom(\alpha_{11}(A),\alpha_{11}\Psi_1(B))\ar[r]\ar[d]^-{\eta_B\circ-}&\Hom(\mathsf{F}(\alpha_{11}(A)),\mathsf{F}(\alpha_{11}\Psi_1(B)))\ar[d]\ar@{=}[r]&\Hom(\alpha_{12}\mathsf{F}_1(A),\alpha_{12}\mathsf{F}_1(\Psi_1(B)))\ar[d]^-{\mathsf{F}(\eta_B)\circ-}\\
	\Hom(\alpha_{11}(A),\alpha_{21}(B))\ar[r]&\Hom(\mathsf{F}(\alpha_{11}(A)),\mathsf{F}(\alpha_{21}(B)))\ar@{=}[r]&\Hom(\alpha_{12}\mathsf{F}(A),\alpha_{22}\mathsf{F}(B)),
}
\end{equation*}
where the top and bottom rows are obtained by applying $\mathsf{F}$.

Note that the vertical arrows are isomorphisms by adjunction and (b). Since $\mathsf{F}_1$ is an equivalence by (a), the top row is an isomorphism. Thus the bottom one is an isomorphism as well.

The essential surjectivity can be deduced now, as the image of a fully faithful functor is triangulated, and the category $\cT_2$ is generated by $\cD^2_1$ and $\cD^2_2$.
\end{proof}

In the general situation where we have a semiorthogonal decomposition
\[
\cT=\langle\cD_1,\cD_2\rangle,
\]
then $\pi_1$ and $\pi_2$ coincide with the left adjoint $\alpha_1^*$ and and the right adjoint $\alpha_2^!$ of the embeddings $\alpha_i\colon\cD_i\hookrightarrow\cT$. The \emph{left mutation functor} through $\cD_i$ is denoted $\mathsf L_{\cD_i}$ and is defined by the canonical functorial distinguished triangle
\begin{equation}\label{eqn:defofleftmut}
    \alpha_i\alpha_i^!\xrightarrow{\eta_i}\mathsf{id}\rightarrow \mathsf L_{\cD_i},
\end{equation}
where $\eta_i$ denotes the counit of the adjunction.

From now on, let us assume that all categories are linear over a field $\K$. An object $E\in\cT$ is \emph{exceptional} if $\Hom(E,E[p])=0$, for all integers $p\neq0$, and $\Hom(E,E)\cong \K$. A set of objects $\{E_1,\ldots,E_m\}$ in $\cT$ is an \emph{exceptional collection} if $E_i$ is an exceptional object, for all $i$, and $\Hom(E_i,E_j[p])=0$, for all $p$ and all $i>j$.
An exceptional collection $\{E_1,\ldots,E_m\}$ is \emph{orthogonal} if $\Hom(E_i,E_j[p])=0$, for all $i,j=1,\ldots,m$ with $i\neq j$ and for all integers $p$.

If $\TT$ admits a semiorthogonal decomposition $\TT=\langle \DD_1,\DD_2\rangle$ with $\DD_2=\langle E_1,\dots,E_m\rangle$, where $\{E_1,\dots,E_m\}$ is an orthogonal exceptional collection, then the left mutation through $\DD_2$ takes a particularly convenient form: \begin{equation}\label{eqn:LeftMutationExceptional}
\mathsf{L}_{\DD_2}(F)=\Cone\left(\ev:\bigoplus_{1\leq i\leq m,p}\Hom(E_i,F[p])\otimes E_i[-p]\to F\right).
\end{equation} 

\subsection{Extending Fourier\textendash Mukai functors}\label{subsec:extending}

Now let $X$ and $Y$ be smooth projective varieties over $\K$ with admissible embeddings $$\alpha:\AA\into\Db(X)\qquad\text{and}\qquad\beta:\BB\into\Db(Y).$$

Thus $\AA$ (resp. $\BB$) is endowed with a Serre functor $\mathsf{S}_{\AA}$ (resp. $\mathsf{S}_\BB$) by \cite[Lemma 2.8]{KI15}.  Recall that an exact functor $\mathsf{F}\colon\AA\to\BB$ is \emph{of Fourier\textendash Mukai type} if there exists $\mathcal{E}\in\Db(X\times Y)$ such that the composition $\beta\circ \mathsf{F}$ is isomorphic to the restriction 
\[
\Phi_\EE|_{\AA}\colon\AA\to\Db(Y).
\]
Here the exact functor $\Phi_{\EE}$ is given by
\[
\Phi_\mathcal{E}(-):=p_{2*}(\mathcal{E}\otimes p_1^*(-)),
\]
where $p_i$ is the $i$th natural projection. By construction, a morphism $\mathcal{E}_1\to\mathcal{E}_2$ in $\Db(X\times Y)$ induces a morphism
\[
\Phi_{\mathcal{E}_1}(F)\longrightarrow\Phi_{\mathcal{E}_2}(F),
\]
for all $F\in\Db(X)$. Moreover, a distinguished triangle
\[
\mathcal{E}_1\to\mathcal{E}_2\to\mathcal{E}_3
\]
in $\Db(X\times Y)$ induces a distinguished triangle
\[
\Phi_{\mathcal{E}_1}(F)\to\Phi_{\mathcal{E}_2}(F)\to\Phi_{\mathcal{E}_3}(F),
\]
for all $F\in\Db(X)$.

\begin{Rem}\label{rmk:FMfun0onorth}
In the above setup, for a given Fourier\textendash Mukai functor  $\Phi_\cE\colon\Db(X)\to\Db(Y)$  suppose that the induced exact functor $\mathsf{F}:=\Phi_\cE|_{\AA}\colon\AA\to\Db(Y)$ factors through $\BB$. 
By \cite[Theorem 7.1]{Kuz11}, the projection functor onto the admissible subcategory $\AA$ is of Fourier\textendash Mukai type. Thus, if we precompose $\Phi_\cE$ with the projection onto $\AA$, we get a Fourier\textendash Mukai functor $\Phi_{\cE'}\colon\Db(X)\to\Db(Y)$ such that $\Phi_{\cE'}|_{\AA}=\mathsf{F}$ and $\Phi_{\cE'}(\!^\perp\!\AA)=0$.
\end{Rem}

\begin{Rem}\label{rmk:FMfun}
It should be noted that, when $\mathsf{F}$ is an equivalence, \cite[Conjecture 3.7]{Kuz07} would imply that $\mathsf{F}$ is of Fourier\textendash Mukai type. Unfortunately, this conjecture is not known to hold true in the generality needed in this paper. This expectation should be compared with the fact that any full functor $\mathsf{F}\colon\Db(X)\to\Db(Y)$ is of Fourier\textendash Mukai type (see \cite{CS,Or}). As for this, it should be noted that in the seminal paper \cite{Or} such a result was proved under the assumption that $\mathsf{F}$ is fully faithful but in \cite{COS,CS} the assumptions were weakened. In particular, assuming full is enough.
\end{Rem}

We can now explain how to extend a Fourier\textendash Mukai functor to an exceptional object.

\begin{Prop}\label{prop:extension}
Let $\alpha\colon\cA\hookrightarrow\Db(X)$ be an admissible embedding and let $E\in \!^\perp\!\cA$ with counit of adjunction  $\eta\colon\alpha\alpha^!(E)\to E$ .
Let $\Phi_{\cE}:\Db(X)\rightarrow \Db(Y)$ be a Fourier\textendash Mukai functor with the property that $\Phi_\cE(^\perp\!\cA)\cong 0$.  For any object $F$ in $\Db(Y)$ and any morphism $\varphi:\Phi_\cE(\alpha\alpha^!(E))\rightarrow F$, there is an object $\cE_\varphi\in\Db(X\times Y)$ and a morphism $\psi_F:\cE_\varphi\rightarrow \cE$ such that 
\begin{enumerate}
    \item[\rm{(1)}] the induced morphism $\Phi_{\cE_\varphi}|_{\cA}\xrightarrow{\psi_F}\Phi_\cE|_{\cA}$ is an isomorphism;
    \item[\rm{(2)}] $\Phi_{\cE_\varphi}|_{^\perp\!\langle \cA,E\rangle}=0$; and
    \item[\rm{(3)}] there is an isomorphism $\Phi_{\cE_\varphi}(E)\cong F$ such that the diagram \begin{equation}\label{eqn:inextprop}
\xymatrix{
	\Phi_{\cE_\varphi}(\alpha\alpha^!(E))\ar[d]^-{\psi_F(\alpha\alpha^!(E))}\ar[rr]^-{\Phi_{\cE_\varphi}(\eta)}&&	\Phi_{\cE_\varphi}(E) \ar[d]^{\cong} \\
	\Phi_{\cE}(\alpha\alpha^!(E))\ar[rr]^-{\varphi} && F
}
\end{equation}
is commutative.
\end{enumerate}
\end{Prop}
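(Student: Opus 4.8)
The idea is to build $\cE_\varphi$ by a ``cone'' construction at the level of kernels, mimicking at the Fourier--Mukai level the extension one would naively perform object-by-object. Concretely, the counit $\eta\colon\alpha\alpha^!(E)\to E$ together with the morphism $\varphi\colon\Phi_\cE(\alpha\alpha^!(E))\to F$ should be encoded as a morphism of kernels. First I would observe that, since $\Phi_\cE(^\perp\!\cA)\cong 0$ and the projection onto $\cA$ is of Fourier--Mukai type (as in Remark \ref{rmk:FMfun0onorth}), one may assume $\cE$ itself is chosen so that $\Phi_\cE$ kills $^\perp\!\cA$ and is already in ``projected'' form. The object $\alpha\alpha^!(E)$ lies in $\cA$, and I would want to realize the composite $\Phi_\cE(\eta)\colon\Phi_\cE(\alpha\alpha^!(E))\to\Phi_\cE(E)$ at the kernel level; but the cleaner move is to use $E\in{}^\perp\!\cA$, so that $\alpha^!(E)=\pi_{\cA}(E)$ is computed by a mutation triangle $\alpha\alpha^!(E)\xrightarrow{\eta}E\to\mathsf L_{\cA}(E)$, and $\mathsf L_{\cA}(E)\in{}^\perp\!\langle\cA,E\rangle^{\perp}$... no, rather $\mathsf L_{\cA}(E)$ is left-orthogonal to $\cA$. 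The key point is that after mutation the object $E$ and the subcategory $\cA$ interact only through this one triangle.

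\textbf{Construction of the kernel.} Here is the construction I would carry out. Let $\iota_E\colon\{E\}\to\Db(X)$ and consider the object $E\boxtimes\Phi_\cE(E)^{\vee}$-type correction — more precisely, since $E$ is an object of $\Db(X)$, there is a canonical kernel $\cF_E\in\Db(X\times Y)$ inducing the functor $G\mapsto \RHom_X(G,E)^{\vee}\otimes \Phi_\cE(E)$ wait, this is getting complicated. Let me instead follow the cleaner route: by the projection-formula description of mutations and \cite[Theorem 7.1]{Kuz11}, the functor $\mathsf{L}_{\langle\cA,E\rangle}\circ(\text{something})$ is Fourier--Mukai; but most directly, $\Phi_\cE$ restricted to $\langle\cA,E\rangle$ is determined by $\Phi_\cE|_\cA$ and the single object $\Phi_\cE(E)$ together with the triangle obtained by applying $\Phi_\cE$ to $\alpha\alpha^!(E)\xrightarrow{\eta}E\to\mathsf L_\cA(E)$. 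I would define $\cE_\varphi$ to fit in a distinguished triangle in $\Db(X\times Y)$
\[
\cE_\varphi \xrightarrow{\ \psi_F\ }\cE\longrightarrow \cC_\varphi,
\]
where $\cC_\varphi$ is chosen so that $\Phi_{\cC_\varphi}$ vanishes on $\cA$ (this forces (1): $\psi_F$ is an isomorphism after restricting to $\cA$), vanishes on $^\perp\!\langle\cA,E\rangle$ (giving (2), using $\Phi_\cE(^\perp\!\cA)=0$ and the extra vanishing on $E$), and such that $\Phi_{\cC_\varphi}(E)=\Cone(\varphi)[-1]$ shifted appropriately, matched via $\Phi_\cE(\eta)$ and $\varphi$. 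The natural candidate for $\cC_\varphi$ is the kernel of the functor $G\mapsto \RHom_X(\alpha\alpha^!(E)\ \text{resolved against}\ G,\ \Cone(\varphi))$; i.e.\ $\cC_\varphi$ should be $q^*(\text{stuff on }X)\otimes p_2^*(\Cone(\varphi))$ convolved to have the right support. Rotating the triangle and applying $\Phi_{(-)}(E)$ gives a triangle $\Phi_{\cE_\varphi}(E)\to F\to \Phi_{\cC_\varphi}(E)[1]$... which I arrange to have the middle map an isomorphism, yielding the isomorphism $\Phi_{\cE_\varphi}(E)\cong F$ of (3), with the commutative square \eqref{eqn:inextprop} coming from functoriality of $\Phi_{(-)}$ applied to $\eta$.

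\textbf{The main obstacle.} The delicate point — and where I expect to spend the real effort — is constructing a \emph{kernel} $\cC_\varphi\in\Db(X\times Y)$ whose induced functor is exactly ``$\RHom_X(-,\alpha\alpha^!(E))$ paired with $\Cone(\varphi)$,'' together with a kernel-level morphism $\cE\to\cC_\varphi$ inducing $\Phi_\cE(\eta)$ followed by $\varphi$ on the relevant objects. A priori one only gets such data on objects, not on kernels; to promote it one uses that $\alpha\alpha^!(E)\in\cA$ is a single object of $\Db(X)$ and hence corresponds to a genuine sheaf-kernel $\cE\mapsto \RHom_X(-,\alpha\alpha^!(E))\otimes(\text{pt on }Y)$ built from $p_1^*(\alpha\alpha^!E)^{\vee}\otimes p_2^*(\Cone\varphi)$; concretely $\cC_\varphi:= p_{13*}(p_{12}^*\mathcal{K}\otimes p_{23}^*(\mathcal{O}_\Delta\text{-type term}))$ where $\mathcal{K}$ is the kernel of $\RHom(-,\alpha\alpha^!E)$, which exists because $\alpha\alpha^!E$ is a perfect object on a smooth projective variety. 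The morphism $\cE\to\cC_\varphi$ is then assembled from the element $\varphi\circ\Phi_\cE(\eta)\in\Hom(\Phi_\cE(\alpha\alpha^! E)\ \text{as a functor evaluated}\ldots)$ via the Yoneda-type identification $\Hom_{\Db(X\times Y)}(\cE,\cC_\varphi)\cong\Hom_{\Db(Y)}(\Phi_\cE(\alpha\alpha^!E),\Cone(\varphi))$ — this identification is where smoothness/properness and the concrete form of $\cC_\varphi$ are used. Once $\psi_F$ and $\cE_\varphi:=\Cone(\psi_F)[-1]$ are in hand, properties (1)–(3) follow formally: (1) and (2) from the vanishing of $\Phi_{\cC_\varphi}$ on $\cA$ and on $E$ respectively (the latter because $\Phi_{\cC_\varphi}(E)=\Cone(\varphi)$ and on $^\perp\langle\cA,E\rangle$ everything vanishes since $\Phi_\cE$ did and $\alpha\alpha^!E\in\cA$), and (3) from rotating the defining triangle of $\cE_\varphi$, evaluating at $E$, and chasing the resulting octahedron against the mutation triangle for $\mathsf L_\cA(E)$.
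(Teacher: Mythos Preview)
Your overall strategy---build $\cE_\varphi$ as the cocone of a morphism from $\cE$ to a ``simple'' $\boxtimes$-kernel $\cC_\varphi$, and produce that morphism from $\varphi$ via the adjunction $\Hom(\Phi_K(M),N)\cong\Hom(K,\mathsf S_X(M^\vee)\boxtimes N)$---is exactly the paper's. But your specific choice of $\cC_\varphi$ is wrong, and this is not a detail: it breaks property (1).

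You propose $\cC_\varphi$ built from $\alpha\alpha^!(E)$ on the $X$-side (paired with $\Cone(\varphi)$ on the $Y$-side). The induced functor then has the shape $G\mapsto \RHom(G,\alpha\alpha^!(E))^\vee\otimes(\text{something})$, and for the triangle $\cE_\varphi\to\cE\to\cC_\varphi$ to give $\Phi_{\cE_\varphi}|_{\cA}\cong\Phi_\cE|_{\cA}$ you need $\Phi_{\cC_\varphi}|_{\cA}=0$. But $\alpha\alpha^!(E)\in\cA$, so $\RHom(A,\alpha\alpha^!(E))$ does not vanish for $A\in\cA$; your $\Phi_{\cC_\varphi}$ is nonzero on $\cA$ and (1) fails. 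Relatedly, pairing with $\Cone(\varphi)$ on the $Y$-side is problematic: under the adjunction with $M=\alpha\alpha^!(E)$ and $N=\Cone(\varphi)$, the element you would associate to $\varphi$ is the composite $\Phi_\cE(\alpha\alpha^!E)\xrightarrow{\varphi}F\to\Cone(\varphi)$, which is zero, so the resulting kernel map is trivial.

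The paper's key move is to use $\mathsf L_{\cA}(E)$ rather than $\alpha\alpha^!(E)$ on the $X$-side, and $F$ (not $\Cone(\varphi)$) on the $Y$-side: set $E':=\mathsf S_X((\mathsf L_\cA(E))^\vee)$ and take the correction kernel to be $E'\boxtimes F[1]$. Because $\mathsf L_\cA(E)\in\cA^\perp$ you get $\Phi_{E'\boxtimes F}|_{\cA}=0$ (hence (1)), and because $\mathsf L_\cA(E)\in\langle\cA,E\rangle$ you get vanishing on ${}^\perp\langle\cA,E\rangle$ (hence (2)). The map $\cE\to E'\boxtimes F[1]$ is produced from $\varphi$ by first using the adjunction with $M=\alpha\alpha^!(E)$, $N=F$, and then postcomposing with the morphism $\gamma\boxtimes\id_F$ coming from the Serre-dualized mutation triangle $\mathsf S_X(E^\vee)\to\mathsf S_X((\alpha\alpha^!E)^\vee)\xrightarrow{\gamma}E'[1]$. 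Property (3) then drops out of the resulting $3\times 3$ diagram obtained by applying the three kernels to the mutation triangle $\alpha\alpha^!(E)\to E\to\mathsf L_\cA(E)$. Your octahedron intuition at the end is in the right spirit, but it only works once the correction kernel is chosen so that both vanishings hold simultaneously---and that forces $\mathsf L_\cA(E)$, not $\alpha\alpha^!(E)$.
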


\begin{proof}
Consider the triangle \eqref{eqn:defofleftmut} applied to the object $E$:
\begin{equation}\label{eqn:leftmutofE}
\alpha\alpha^!(E)\xrightarrow{\eta} E\rightarrow \mathsf L_{\AA}(E).
\end{equation}
Set $E':=\mathsf S_{X}\left((\mathsf L_{\AA}(E))^\vee\right)$. If we take the derived dual of \eqref{eqn:leftmutofE} and we apply $\mathsf{S}_{X}$ we get the triangle
\begin{equation}\label{eqn:twistdual}
    \mathsf S_{X}(E^\vee)\rightarrow \mathsf S_{X}\left((\alpha\alpha^!(E))^\vee\right)\stackrel{\gamma}{\longrightarrow} E'[1].
\end{equation}

Next, we observe that, for any $K\in\Db(X\times Y)$, $M\in\Db(X)$ and $N\in\Db(Y)$, we get isomorphisms
\begin{equation}\label{eqn:usefformu}
\begin{split}
    \Hom(\Phi_K(M),N)&=\Hom(p_{2*}(K\otimes p_1^*M),N)\\&\cong \Hom(K\otimes p_1^*M,p_2^!N)\\&\cong \Hom(K,\mathsf S_{X}(M^\vee)\boxtimes N)
\end{split}
\end{equation}
which are functorial in $K$, $M,$ and $N$. Thus, we get a natural morphism
\begin{equation}\label{eqn:ident}
\Hom(\Phi_\cE(\alpha\alpha^! (E)),F)\cong \Hom\left(\cE,\mathsf S_{X}\left((\alpha\alpha^!(E))^\vee\right)\boxtimes F\right)\stackrel{\gamma\boxtimes\id_F}{\longrightarrow}\Hom(\cE,E'\boxtimes F[1]),
\end{equation}
where the latter map is induced by the postcomposition by $\gamma\boxtimes\id_F$ for $\gamma$ as in \eqref{eqn:twistdual}.

Such a morphism of vector spaces takes $\varphi$ as in the assumptions of the proposition to a morphism $\varphi':\cE\rightarrow E'\boxtimes F[1]$. We set $\psi_F:\cE_\varphi\to\EE$ to be the cocone of $\varphi'$ as in the distinguished triangle
\begin{equation}\label{eqn:triofkernels}
\cE_\varphi\xrightarrow{\psi_F}\cE\xrightarrow{\varphi'}E'\boxtimes F[1].
\end{equation}
Clearly, $\psi_F$ induces a natural transformation $\Phi_{\cE_\varphi}\to\Phi_\cE$ which we denote with the same symbol.

Note that ${}^\perp\langle\cA,E\rangle\subseteq{}^\perp\cA$. Then, an easy computation using \eqref{eqn:usefformu} shows that $\mathsf{L}_\cA(E)\in\cA^\perp$ and hence $\Phi_{E'\boxtimes F}(\cA)=0$ and, similarly, that  $\mathsf{L}_\cA(E)\in\langle\cA,E\rangle=({}^\perp\langle\cA,E\rangle)^\perp$ and hence $\Phi_{E'\boxtimes F}({}^\perp\langle\cA,E\rangle)=0$. Therefore
\begin{equation}\label{eqn:vanofcomps}
\Phi_{E'\boxtimes F}(\AA)\cong\Phi_{E'\boxtimes F}(\!^\perp\!\langle\AA,E\rangle)\cong 0\qquad \Phi_{E'\boxtimes F}(E)\cong F.
\end{equation}
Since, by assumption, $\Phi_\cE(^\perp\!\AA)\cong 0$, the first two isomorphisms imply (1) and (2) in the statement. 

To complete the proof of (3) and deduce the commutative diagram \eqref{eqn:inextprop}, we apply the Fourier\textendash Mukai functors whose kernels are the objects in \eqref{eqn:triofkernels} to the triangle in \eqref{eqn:leftmutofE}. We obtain the commutative diagram
\begin{equation}\label{eqn:comdiagPhi}
\xymatrix{
	\Phi_{\cE_\varphi}(\mathsf L_{\AA} (E)[-1])\ar[d]^-{\psi_F(L_{\AA} (E)[-1])}\ar[rr] &&	\Phi_{\cE_\varphi}(\alpha\alpha^!(E)) \ar[rr]^-{\Phi_{\cE_\varphi}(\eta)}\ar[d]^-{\psi_F(\alpha\alpha^!(E))}&&\Phi_{\cE_\varphi}(E) \ar[d]  \\
	\Phi_{\cE}(\mathsf L_{\AA} (E)[-1])\ar[d]^-{\varphi'(\mathsf L_{\AA} (E)[-1])}\ar[rr]^{\epsilon} &&	\Phi_{\cE}(\alpha\alpha^!(E)) \ar[rr]\ar[d] && 0\ar[d] \\
	\Phi_{E'\boxtimes F[1]}(\mathsf L_{\AA} (E)[-1])\ar[rr] && 0	\ar[rr] && F[1] 
}
\end{equation}
with rows and columns which are distinguished triangles. The zero in the bottom row of the diagrams comes from the fact that $\Phi_{E'\boxtimes F}(\alpha\alpha^!(E))\cong 0$ by \eqref{eqn:vanofcomps}.  The zero on the third column depends on the fact that $E\in{}^\perp\cA$ and, by assumption, $\Phi_\mathcal{E}({}^\perp\cA)\cong 0$. In conclusion, the morphisms $\epsilon$ and $\psi_F(\alpha\alpha^!(E))$ are both isomorphisms.

Now we can recast its top row and left column in the following commutative diagram
\begin{equation}\label{eqn:comdiagsmall}
\xymatrix{
	\Phi_{\cE_\varphi}(\mathsf L_{\AA} (E)[-1])\ar@{=}[d]\ar[rr] &&	\Phi_{\cE_\varphi}(\alpha\alpha^!(E)) \ar[rr]^-{\Phi_{\cE_\varphi}(\eta)}\ar[d]^-{\psi_F(\alpha\alpha^!(E))}&&\Phi_{\cE_\varphi}(E)\\
	\Phi_{\cE_\varphi}(\mathsf L_{\AA}(E)[-1])\ar[rr] &&	\Phi_{\cE}(\alpha\alpha^!(E)) \ar[rr]^-{\varphi'\circ\epsilon^{-1}} && 
	\Phi_{E'\boxtimes F[1]}(\mathsf L_{\AA}(E)[-1])
}
\end{equation}
where we write $\varphi'$ as a shorthand for the natural transformation $\varphi'(\mathsf L_{\AA} (E)[-1])$ coming from \eqref{eqn:comdiagsmall}.  Observe that the middle vertical map is an isomorphism by \eqref{eqn:comdiagsmall}, since $\alpha\alpha^!(E)\in\AA$.  Therefore, there is an isomorphism 
\begin{equation}\label{eqn:impdop}
\Phi_{\cE_\varphi}(E)\to\Phi_{E'\boxtimes F[1]}(\mathsf L_{\AA}(E)[-1])\cong F
\end{equation}
making all squares in \eqref{eqn:comdiagsmall} commutative. The right square is precisely \eqref{eqn:inextprop} as the bottom morphism identifies with $\varphi$ under the maps \eqref{eqn:ident} and the isomorphism $\Phi_{E'\boxtimes F[1]}(\mathsf L_{\AA}(E)[-1])\cong\Phi_{E'\boxtimes F[1]}(E[-1])\cong F$.
\end{proof}

Now we consider the case when $\Phi_{\EE}|_{\AA}$ is an equivalence onto its image.

\begin{Prop}\label{prop:extension2}
Let $\alpha\colon\cA\hookrightarrow\Db(X)$ be an admissible embedding and let $E\in \!^\perp\!\cA$ with counit of adjunction  $\eta\colon\alpha\alpha^!(E)\to E$ .
Let $\Phi_{\cE}:\Db(X)\rightarrow \Db(Y)$ be a Fourier\textendash Mukai functor with the property that $\Phi_\cE(^\perp\!\cA)\cong 0$.  Suppose further that
\begin{itemize}
\item[{\rm (a)}] $\Phi_{\EE}|_\AA$ is an equivalence onto an admissible subcategory $\BB$ with embedding $\beta:\BB\into\Db(Y)$, and
\item[{\rm (b)}] there is an exceptional object $F\in\!^\perp\!\BB$ and an isomorphism $\rho:\Phi_\cE(\alpha\alpha^!(E))\isomor\beta\beta^!(F)$.
\end{itemize}
Then the Fourier\textendash Mukai functor $\Phi_{\cE_\varphi}$ restricts to an equivalence between $\langle \cA,E\rangle$ and $\langle \BB,F\rangle$, where $\EE_{\varphi}$ is the object given by Proposition \ref{prop:extension} for $\varphi:=\eta'\circ\rho$ and $\eta':\beta\beta^!(F)\to F$ is the counit of adjunction.
\end{Prop}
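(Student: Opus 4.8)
The plan is to apply Lemma \ref{lem:crit} to the Fourier\textendash Mukai functor $\mathsf{F} := \Phi_{\EE_\varphi}|_{\langle\cA,E\rangle}\colon \langle\cA,E\rangle \to \langle\BB,F\rangle$, with the two semiorthogonal decompositions $\cT_1 = \langle\cA,E\rangle = \langle\cD_1^1,\cD_2^1\rangle$ where $\cD_1^1 = \cA$ and $\cD_2^1 = \langle E\rangle$, and likewise $\cT_2 = \langle\BB,F\rangle = \langle\cD_1^2,\cD_2^2\rangle$ with $\cD_1^2 = \BB$, $\cD_2^2 = \langle F\rangle$. First I would note that $\langle\cA,E\rangle$ and $\langle\BB,F\rangle$ are indeed admissible in $\Db(X)$ and $\Db(Y)$ respectively (the former by hypothesis that $\cA$ is admissible and $E$ is exceptional with $E \in {}^\perp\!\cA$, the latter by (a) and (b)), so that $\mathsf{F}$ has left and right adjoints and the components $\cA$, $\langle E\rangle$, $\BB$, $\langle F\rangle$ are admissible in the respective ambient small categories — this is what is needed to invoke Lemma \ref{lem:crit}.

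Next I would verify hypothesis (a) of Lemma \ref{lem:crit}. On $\cD_1^1 = \cA$: by part (1) of Proposition \ref{prop:extension}, $\psi_F$ induces an isomorphism $\Phi_{\EE_\varphi}|_\cA \cong \Phi_\cE|_\cA$, and by hypothesis (a) of the present proposition the latter is an equivalence onto $\BB$; hence $\mathsf{F}_1 := \mathsf{F}|_\cA\colon \cA \isomor \BB$ is an equivalence. On $\cD_2^1 = \langle E\rangle$: by part (3) of Proposition \ref{prop:extension} there is an isomorphism $\Phi_{\EE_\varphi}(E) \cong F$, and since $F$ is exceptional and $E$ is exceptional, this gives an equivalence $\langle E\rangle \isomor \langle F\rangle$. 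I also need to check that $\mathsf{F}$ sends $\cA$ into $\BB$ (clear from the above) and $\langle E\rangle$ into $\langle F\rangle$ (also clear), so that $\mathsf{F}$ restricts to a functor $\langle\cA,E\rangle \to \langle\BB,F\rangle$ as claimed; here one uses part (2) of Proposition \ref{prop:extension} only indirectly — the point is simply that the image of $E$ under $\Phi_{\EE_\varphi}$ lands in $\langle\BB,F\rangle$.

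The main work is hypothesis (b) of Lemma \ref{lem:crit}, the compatibility with the gluing functors. In our situation $\Psi_1 = \alpha^! \circ (\text{incl of } \langle E\rangle) \colon \langle E\rangle \to \cA$ sends $E \mapsto \alpha\alpha^!(E)$ (up to the identification of $\langle E\rangle$ with vector spaces), and the counit $\eta_E$ is exactly the morphism $\eta\colon \alpha\alpha^!(E) \to E$. Likewise $\Psi_2\colon \langle F\rangle \to \BB$ sends $F \mapsto \beta\beta^!(F)$ with counit $\eta'$. The content of (b) is that for every $A \in \cA$, postcomposition with $\mathsf{F}(\eta)$ gives an isomorphism $\Hom(\mathsf{F}(A), \mathsf{F}(\alpha\alpha^!(E))) \to \Hom(\mathsf{F}(A), \mathsf{F}(E))$. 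The key observation is that the commutative square \eqref{eqn:inextprop} from Proposition \ref{prop:extension}, together with the identifications $\Phi_{\EE_\varphi}(\alpha\alpha^!(E)) \xrightarrow{\psi_F} \Phi_\cE(\alpha\alpha^!(E)) \xrightarrow{\rho} \beta\beta^!(F)$ and $\Phi_{\EE_\varphi}(E) \cong F$, identifies $\mathsf{F}(\eta)$ (up to the chosen isomorphisms) with the original counit $\eta'\colon \beta\beta^!(F) \to F$. Since $F \in {}^\perp\!\BB$ and $\mathsf{F}(A) = \Phi_{\EE_\varphi}(A) \in \BB$, postcomposition with $\eta'$ is an isomorphism $\Hom(\mathsf{F}(A), \beta\beta^!(F)) \to \Hom(\mathsf{F}(A), F)$ by the universal property of the right adjoint $\beta^!$ applied to $\mathsf{F}(A) \in \BB$ (equivalently, because $\mathsf{L}_\BB(F) \in \BB^\perp$ so $\Hom(\mathsf{F}(A), \mathsf{L}_\BB(F)[p]) = 0$ for all $p$, and the triangle \eqref{eqn:defofleftmut} then forces the claim). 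I expect the main obstacle to be purely bookkeeping: one must chase the chosen isomorphisms from Proposition \ref{prop:extension} carefully to confirm that $\mathsf{F}(\eta)$ really becomes $\eta'$ and not some other map, since a priori Proposition \ref{prop:extension}(3) only guarantees commutativity of \eqref{eqn:inextprop} with $\varphi = \eta' \circ \rho$ on the nose, and one has to check this is enough to pass to the level of $\Hom$-spaces against arbitrary $\mathsf{F}(A)$. Once (a) and (b) are in hand, Lemma \ref{lem:crit} immediately gives that $\mathsf{F}$ is an equivalence $\langle\cA,E\rangle \isomor \langle\BB,F\rangle$, completing the proof.
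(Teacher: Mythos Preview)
Your proposal is correct and follows essentially the same approach as the paper: apply Lemma~\ref{lem:crit}, verify its hypothesis~(a) via parts~(1) and~(3) of Proposition~\ref{prop:extension}, and verify its hypothesis~(b) by using the commutative square~\eqref{eqn:inextprop} to identify the cone of $\Phi_{\cE_\varphi}(\eta)$ with $\mathsf{L}_{\BB}(F)\in\BB^\perp$, so that $\RHom(\Phi_{\cE_\varphi}(A),-)$ applied to the triangle gives the required isomorphism. The paper's proof is slightly more streamlined in that it computes the cone directly rather than first phrasing things as ``$\mathsf{F}(\eta)$ becomes $\eta'$ under the chosen identifications'', but the content is identical.
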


\begin{proof}
We apply Lemma \ref{lem:crit}. The assumption (a) of the lemma obviously holds. It is enough to check that for any object $A$ in $\AA$,
\begin{equation}\label{eqn:iss}
\xymatrix{
\Hom(\Phi_{\cE_\varphi}(A),\Phi_{\cE_\varphi}(\alpha\alpha^!(E)))\ar[rrr]^-{\Phi_{\cE_\varphi}(\eta)}&&&\Hom(\Phi_{\cE_\varphi}(A),\Phi_{\cE_\varphi}(E))
}
\end{equation}
is an isomorphism. For this we consider the commutative diagram \eqref{eqn:inextprop} in Proposition \ref{prop:extension} (3), which takes the form 
\[
\xymatrix{
	\Phi_{\cE_\varphi}(\alpha\alpha^!(E))\ar[d]^-{\psi_F(\alpha\alpha^!(E))}\ar[rr]^-{\Phi_{\cE_\varphi}(\eta_{1})} &&	\Phi_{\cE_\varphi}(E) \ar[d]^-{\cong} \\
	\Phi_{\cE}(\alpha\alpha^!(E))\ar[rr]^-{\varphi=\eta'\circ\rho} && F
}
\]
where the vertical arrows are isomorphisms for the same arguments as in the proof of Proposition \ref{prop:extension} (see, in particular, the middle column in \eqref{eqn:comdiagPhi} for the isomorphism on the left and \eqref{eqn:impdop} for the one on the right).

Now the cone $C$ of the morphism at the top is isomorphic to the cone of $\varphi$, which in turn is isomorphic to $\mathsf L_{\BB}(F)\in\BB^\perp$.  It follows that $C\in\BB^\perp$, so $C$ is right orthogonal to $\Phi_{\cE_\varphi}(A)\in\BB$ for any  $A\in \AA$. Therefore, if we apply $\RHom(\Phi_{\cE_\varphi}(A),-)$ to the distinguished triangle
\[
\xymatrix{
\Phi_{\cE_\varphi}(\alpha\alpha^!(E)))\ar[rr]^-{\Phi_{\cE_\varphi}(\eta)} &&	\Phi_{\cE_\varphi}(E)\ar[r]& C
},
\]
we get the isomorphism \eqref{eqn:iss}.
\end{proof}

\subsection{A side remark: a dg category approach}\label{subsect:dgcat}

Even though our approach in this paper is purely triangulated, this section ends with a short discussion concerning an alternative viewpoint via dg categories (the non-expert reader can have a look at \cite{CS07} for a quick introduction to this subject). More precisely, this section should be considered as an indication of how an expert in dg categories might read the results in Section \ref{subsec:extending}. On the other hand, the reader who is not interested in this higher categorical viewpoint can move to Section \ref{sec:geometry}.

Let us first rediscuss semiorthogonal decompositions in terms of dg categories. Assume that $\cT=\langle\cD_1,\cD_2 \rangle$ is a small $\K$-linear \emph{algebraic triangulated category}, i.e.\ there is a small $\K$-linear pre-triangulated dg category $\cB$ and an exact equivalence $\cT\cong H^0(\cB)$, where $H^0(\cB)$ denotes the homotopy category of $\cB$.
Suppose that $\alpha_i\colon\cD_i\hookrightarrow\cT$ is the embedding of an admissible subcategory, for $i=1,2$. Then there exist two pre-triangulated dg subcategories $\mathsf{G}_i\colon\cB_i\hookrightarrow\cB$ such that $H^0(\cB_i)\cong\cD_i$ and $H^0(\mathsf{G}_i)\cong\alpha_i$. As it is explained for example in \cite[Section 4]{KL}, there exist a dg bimodule $\varphi$ (i.e.\ a $\cB_1^\circ\otimes\cB_2$-dg module), a pre-triangulated dg category $\cB_1\times_\varphi\cB_2$, whose definition depends on $\cB_1$, $\cB_2$ and $\varphi$, and an isomorphism $\cB\cong\cB_1\times_\varphi\cB_2$ in $\mathrm{Ho}(\mathbf{dgCat})$. Here $\mathrm{Ho}(\mathbf{dgCat})$ denotes the homotopy category of the category $\mathbf{dgCat}$ of (small) dg categories which are linear over the field $\K$. We will refer to $\cB_1\times_\varphi\cB_2$ as the \emph{gluing of $\cB_1$ and $\cB_2$ along $\varphi$}. By \cite[Corollary 4.5]{KL}, the dg bimodule $\varphi$ yields, at the triangulated level, the gluing functor in Section \ref{subsec:gen}.

Assume that $\cC_1$ and $\cC_2$ are two pre-triangulated dg categories which are obtained by this gluing procedure. In explicit form, $\cC_i=\cC^i_1\times_{\varphi_i}\cC^i_2$.

\begin{Ex}\label{ex:dgcat}
For our applications and in accordance with the discussion and notations in the previous section, we should think of the case $\langle\cA,E\rangle\cong H^0(\cC_1)$ and $\langle\cB,F\rangle\cong H^0(\cC_2)$. The dg subcategories $\cC_j^i$ are taken so that $\cA\cong H^0(\cC^1_1)$ $\cB\cong H^0(\cC^2_1)$, $\langle E\rangle\cong H^0(\cC^1_2)$ and $\langle F\rangle\cong H^0(\cC^2_2)$.
\end{Ex}


Let $\mathsf{J}_1\colon\cC^1_1\to\cC^2_1$ and $\mathsf{J}_2\colon\cC^1_2\to\cC^2_2$ be two isomorphisms in $\mathrm{Ho}(\mathbf{dgCat})$. We would like to conclude that $\mathsf{J}_1$ and $\mathsf{J}_2$ glue to an isomorphism $\mathsf{J}\colon\cC_1\to\cC_2$ in $\mathrm{Ho}(\mathbf{dgCat})$, under some reasonable assumptions.
To this extent, consider the $(\cC_1^1)^\circ\otimes\cC_2^1$-module $\varphi'_1$ obtained from $\varphi_2$ by composing with $\mathsf{J}_1$ and $\mathsf{J}_2$.

The following is a well-known result, which should be thought of as the dg analogue of Propositions \ref{prop:extension} and \ref{prop:extension2}.

\begin{Prop}\label{prop:KL}
	Under the assumptions above, suppose that there is a quasi-isomorphism between $\varphi_1$ and $\varphi'_1$. Then there is an isomorphism $\mathsf{J}\colon\cC_1\to\cC_2$ in $\mathrm{Ho}(\mathbf{dgCat})$ whose restriction to $\cC^1_j$ is $\mathsf{J}_j$, for $j=1,2$.
\end{Prop}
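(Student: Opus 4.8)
The plan is to treat this as an entirely formal statement about gluings of dg categories, following the framework of \cite[Section 4]{KL}, and to assemble the desired isomorphism $\mathsf{J}$ from the data $\mathsf{J}_1$, $\mathsf{J}_2$ and the quasi-isomorphism $\varphi_1\simeq\varphi_1'$. First I would recall the precise construction of the gluing $\cC_1^i\times_{\varphi_i}\cC_2^i$: as a dg category its objects are triples (an object of $\cC_1^i$, an object of $\cC_2^i$, and a closed degree-zero morphism between their images under the twisting prescribed by $\varphi_i$), with Hom-complexes built as a cone-type total complex out of the Hom-complexes of $\cC_1^i$, $\cC_2^i$ and the bimodule $\varphi_i$. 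The key functoriality statement, already implicit in \cite{KL}, is that a pair of dg functors (or, more flexibly, a pair of morphisms in $\mathrm{Ho}(\mathbf{dgCat})$, realized by bimodules) together with a morphism of the corresponding twisting bimodules over them induces a morphism of the glued categories; and that if the two functors are isomorphisms and the bimodule map is a quasi-isomorphism, the induced morphism is an isomorphism in $\mathrm{Ho}(\mathbf{dgCat})$.

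The concrete steps would be: (1) Using $\mathsf{J}_1$ and $\mathsf{J}_2$, transport the twisting bimodule $\varphi_2$ on the $\cC_2$-side to a $(\cC_1^1)^\circ\otimes\cC_2^1$-bimodule $\varphi_1'$ — this is the bimodule defined in the paragraph preceding the statement; by construction $\mathsf{J}_1$, $\mathsf{J}_2$ then glue to a canonical isomorphism $\cC_1^1\times_{\varphi_1'}\cC_2^1\xrightarrow{\sim}\cC_2^1\times_{\varphi_2}\cC_2^2=\cC_2$ in $\mathrm{Ho}(\mathbf{dgCat})$, simply because replacing a diagram by an isomorphic one does not change the (homotopy) gluing. (2) Invoke the hypothesis: a quasi-isomorphism $\varphi_1\simeq\varphi_1'$ of $(\cC_1^1)^\circ\otimes\cC_2^1$-bimodules induces an isomorphism $\cC_1=\cC_1^1\times_{\varphi_1}\cC_2^1\xrightarrow{\sim}\cC_1^1\times_{\varphi_1'}\cC_2^1$ in $\mathrm{Ho}(\mathbf{dgCat})$, since the homotopy type of the gluing depends on the bimodule only through its quasi-isomorphism class (this is where one uses that $\mathrm{Ho}(\mathbf{dgCat})$ inverts quasi-equivalences, and that the gluing construction is homotopy invariant in the bimodule variable — see \cite[Section 4]{KL} and the general yoga of Toën's derived Morita theory). (3) Compose the isomorphisms of (2) and (1) to obtain $\mathsf{J}\colon\cC_1\to\cC_2$; by tracking the construction through, its restriction to $\cC_j^1$ is, on homotopy categories, the functor induced by $\mathsf{J}_j$, since both the base-change in (1) and the bimodule replacement in (2) act as the identity on the two "corner" subcategories.

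The main obstacle — or rather, the only real subtlety — is verifying the homotopy invariance used in step (2) with the correct degree of care: one must know that the gluing $\cC_1^1\times_{(-)}\cC_2^1$ sends a quasi-isomorphism of bimodules to an isomorphism in $\mathrm{Ho}(\mathbf{dgCat})$, not merely to a dg functor that is a quasi-equivalence at each stage. In the algebraic setting of \cite{KL} this follows because the gluing can be described, up to the model-categorical localization, as a homotopy fiber product / cone construction on the level of quasi-functors, and cones of homotopic maps of bimodules are quasi-equivalent; alternatively one can present both sides explicitly and exhibit the comparison dg functor directly, checking it is a quasi-equivalence on Hom-complexes via the five-lemma applied to the cone filtration of the glued Hom-complexes. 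I would phrase the write-up to cite \cite[Section 4, especially Corollary 4.5]{KL} for the functoriality and homotopy invariance of the gluing, so that steps (1)–(3) reduce to a short formal assembly; the substantive content is genuinely already in \loc, which is why the paper calls this "well-known."
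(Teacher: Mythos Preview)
Your proposal is correct and is essentially the approach the paper has in mind: the paper's own proof is a one-line reference, stating that the argument is an easy adaptation of the proof of Propositions 4.11 and 4.14 in \cite{KL}, which are exactly the functoriality and quasi-equivalence criteria for glued dg categories that you invoke in steps (1)--(3). Your write-up is in fact more detailed than what the paper provides; the only adjustment is that you cite Corollary 4.5 of \cite{KL} for the homotopy invariance, whereas the paper points instead to Propositions 4.11 and 4.14 of \loc\ for the precise statements being adapted.
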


\begin{proof}
	The argument is an easy adaptation of the proof of Propositions 4.11 and 4.14 in \cite{KL}. 
\end{proof}

Roughly, the existence of a quasi-isomorphism between $\varphi_1$ and $\varphi'_1$ means that $\varphi_1$ and $\varphi_2$ are compatible under the action of $\mathsf{J}_1$ and $\mathsf{J}_2$. In general, it is clear that this assumption is not easy to verify.
On the other hand, if we look at the triangulated counterpart of this hypothesis in the simplified situation in Example \ref{ex:dgcat}, it corresponds to the assumptions on $\Phi_\EE$ in Proposition \ref{prop:extension2}. Hence, the reader should interpret the results in the previous section as a way to give a conceptually simple triangulated version of Proposition \ref{prop:KL}. In this case, the conditions under which one glue exact equivalences will be easy to check, see Section \ref{sec:genextresult} for an application of this technique.

It should be noted that the dg approach to semiorthogonal decompositions discussed in this section was put in an even more general categorical setting in \cite{RVdB} (see, in particular, \cite[Section 9.3]{RVdB}).

\section{The geometric setting}\label{sec:geometry}

In this section, we discuss the structure of the derived categories of Enriques surfaces and of Artin\textendash Mumford quartic double solids. The emphasis is on  the analogies which are at the core of this paper.

\subsection{The case of Enriques surfaces}\label{subsec:Enriques}
We work over an algebraically closed field $\K$ of characteristic different from $2$. We refer to \cite{D} for a quick but exhaustive introduction to the geometry of Enriques surfaces. For the moment, we recall several useful facts.

First note that the Serre functor $\mathsf S_X(-):=(-)\otimes\omega_X[2]$ satisfies the property $\mathsf S_X^2=[4]$, but $\mathsf{S}_X\ne[2]$ because the dualizing sheaf $\omega_X:=\OO_X(K_X)$ is non-trivial but $2$-torsion. Note that $\mathrm{NS}(X)_{\mathrm{tor}}\cong\Z/2\Z$ and it is generated by the class of $K_X$. We set $\Num(X):=\mathrm{NS}(X)/\mathrm{NS}(X)_{\mathrm{tor}}$. If $F$ is a line bundle on $X$ and $f$ is its class in $\Num(X)$, then Riemann\textendash Roch takes the simple form
\begin{equation}\label{eqn:RR}
\chi(F)=\frac{1}{2}f^2+1
\end{equation}
for an Enriques surface.

Secondly, recall that an Enriques surface $X$ is called \emph{unnodal} if it contains no smooth rational curves.  It is called \emph{nodal} otherwise. Denote by $\mathbf{M}$ the $10$-dimensional moduli space of Enriques surfaces. Nodal Enriques surfaces form an irreducible divisor in $\mathbf{M}$ (see, for example, \cite[Section 5]{D}).

Now let us make precise a result that was mentioned in the introduction. Indeed, as an application of the stronger statements \cite[Proposition 6.1]{BM01} and \cite[Theorem 1.1]{HLT17}, we get the following result.

\begin{Thm}[Bridgeland\textendash Maciocia, Honigs\textendash Lieblich\textendash Tirabassi]\label{thm:dertordercat}
Let $X$ and $Y$ be smooth projective surfaces defined over an algebraically closed field $\K$ of characteristic different from $2$. If $X$ is an Enriques surface and there is an exact equivalence $\Db(X)\cong\Db(Y)$, then $X\cong Y$.
\end{Thm}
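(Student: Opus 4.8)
The plan is to reduce the statement to the case where $Y$ is itself an Enriques surface and then to invoke the two results cited in the statement, which handle respectively the characteristic zero case (Bridgeland--Maciocia) and the odd positive characteristic case (Honigs--Lieblich--Tirabassi).

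First I would use the fact that an exact equivalence $\Db(X)\cong\Db(Y)$ between smooth projective surfaces is automatically of Fourier--Mukai type (Orlov's theorem over a field of characteristic zero, together with its analogue in positive characteristic), so that the standard derived invariants are available. The canonical ring $\bigoplus_{m\geq 0}H^0(X,\omega_X^{\otimes m})$ is preserved, hence $\kappa(Y)=\kappa(X)=0$; Hochschild homology is preserved, hence (via the Hochschild--Kostant--Rosenberg decomposition in characteristic zero, and by the arguments of \cite{HLT17} in positive characteristic) $h^1(\OO_Y)=h^0(\omega_Y)=0$. Finally the Serre functor is transported by any Fourier--Mukai equivalence, and since $\mathsf{S}_X^{\,2}=[4]$ while $\mathsf{S}_X\ne[2]$, the dualizing sheaf $\omega_Y$ is $2$-torsion; it is non-trivial, because $\omega_Y\cong\OO_Y$ would force $h^0(\omega_Y)=h^0(\OO_Y)=1$. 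Since it is known that a Fourier--Mukai partner of a minimal surface of non-negative Kodaira dimension is again minimal, $Y$ is a minimal surface with $\kappa(Y)=0$, $h^1(\OO_Y)=0$, $p_g(Y)=0$, and $\omega_Y$ of order exactly $2$; by the Enriques--Kodaira classification, respectively its refinement in odd positive characteristic, $Y$ is an Enriques surface.

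Once both $X$ and $Y$ are known to be Enriques surfaces, the conclusion $X\cong Y$ follows from \cite[Proposition 6.1]{BM01} when $\Char\K=0$ and from \cite[Theorem 1.1]{HLT17} when $\Char\K$ is odd and positive. The genuine content lies entirely in those two references: in \cite{BM01} a derived equivalence of Enriques surfaces is lifted to a ($\Z/2\Z$-equivariant) equivalence between their canonical K3 covers, and the derived Torelli theorem for K3 surfaces, combined with the extra rigidity coming from the fixed-point-free involutions, forces the two Enriques surfaces to be isomorphic; in \cite{HLT17} the theory of Fourier--Mukai partners is developed over fields of odd positive characteristic and it is shown that an Enriques surface has no non-isomorphic Fourier--Mukai partner. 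Consequently, the only new input needed here is the reduction carried out in the previous paragraph, which is routine; the main difficulty, namely derived Torelli for Enriques surfaces itself, is precisely what one is permitted to cite.
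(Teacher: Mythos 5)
Your proof is correct and takes the same route as the paper, which simply derives the statement by citing \cite[Proposition 6.1]{BM01} in characteristic zero and \cite[Theorem 1.1]{HLT17} in odd positive characteristic. The reduction you spell out (showing that any smooth projective surface $Y$ derived equivalent to an Enriques surface is itself Enriques) is sound, but it is already subsumed by the two cited results, which allow $Y$ to be an arbitrary smooth projective surface, so it is an optional elaboration rather than needed input.
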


To the best of our knowledge, a similar statement is not known when $\K$ has characteristic $2$, due to the additional complexity of the double cover structure. We should also observe that generalizations of Theorem \ref{thm:dertordercat} are available in the twisted setting (see \cite{AW} when the characteristic is $0$ and \cite{HLT17} in positive characteristic).

\smallskip

As we mentioned in the introduction, the category $\Db(X)$ admits a very nice semiorthogonal decomposition.  To describe it, we must begin with the relationship between Fano polarizations and canonical isotropic 10-sequences.

\begin{Def} 
Let $X$ be an Enriques surface.
\begin{enumerate}
    \item A \emph{Fano polarization} on $X$ is a nef divisor $\Delta$ such that\begin{enumerate}
	\item $\Delta^2=10$;
	\item $\Delta.F\geq 3$ for every nef divisor $F$ with $F^2=0$.
\end{enumerate} 
\item An \emph{isotropic $10$-sequence} in $\Num(X)$ is a set of $10$ vectors $\cF=\{f_1,\dots, f_{10}\}\subset\Num(X)$ such that $f_i.f_j=1-\delta_{ij}$, where $\delta_{ij}$ is Kronecker delta.
\end{enumerate}

\end{Def}

 If $\Delta$ is a Fano polarization, then by \cite[Theorem 2.6]{DM} it determines an isotropic $10$-sequence $\{f_1,\dots, f_{10}\}$ which is unique up to permutation and such that the numerical class $\delta$ of $\Delta$ satisfies
\begin{equation}\label{eqn:FP1}
\delta=\frac{1}{3}(f_1+\dots+f_{10}).
\end{equation}
Even though this is not going to be used later, let us note that the statement above can be made more precise. Indeed, as explained in Sections 2.2 and 2.3 of \cite{DM}, the isotropic $10$-sequence $\{f_1,\dots,f_{10}\}$ yielding \eqref{eqn:FP1} is \emph{canonical}, i.e.\ it contains all the nef representatives of its orbit under the action of the Weyl group generated by the reflections in the classes of the $(-2)$-curves. Moreover, by \cite[Theorem 2.7]{DM}, the converse is also true: given a canonical isotropic $10$-sequence $\{f_1,\dots,f_{10}\}$, there is a Fano polarization $\Delta$ on $X$ whose numerical class $\delta$ satisfies \eqref{eqn:FP1}.

\begin{Rem}\label{rmk:Fanoample}
(i) By \cite[Proposition 1]{DR91}, every Enriques surface possesses a Fano polarization $\Delta$. On the other hand, by \cite[Proposition 2.7]{DM}, a Fano polarization $\Delta$ is ample if and only if the corresponding isotropic $10$-sequence $\{f_1,\dots, f_{10}\}$ is made of nef classes.

(ii) If $X$ is unnodal, an easy application of \cite[Theorem 3.2]{Cos85} shows that $X$ has an ample Fano polarization $\Delta$. A generic nodal Enriques surface has the same property by \cite[Theorem 3.2.2]{Cos} and \cite[Corollary 4.4]{DM}. Thus Enriques surfaces with an ample Fano polarization form an open subset in $\mathbf{M}$ whose complement has codimension at least $2$. 
\end{Rem}

\begin{Ex}\label{ex:EnrC}
Let $X$ be an unnodal Enriques surface defined over $\C$ with an ample Fano polarization $\Delta$. Let $\{f_1,\dots,f_{10}\}$ be the isotropic  $10$-sequence associated to $\Delta$, and let $F_i^{\pm}\in \NS(X)$ be the two preimages of $f_i$ under the quotient map $\NS(X)\onto\Num(X)$. Then for $i=1,\dots,10$, $F_i^+=F_i^-+K_X$ and the linear series $$|2F_i^+|=|2F_i^-|$$ has precisely two double fibers, supported on $F_i^+$ and $F_i^-$. As observed in \cite[Example 2.6]{Zu}, $\{\OO_X(F_i^+)\}_{1\leq i\leq 10}$ is an orthogonal exceptional collection. Each of the objects $\OO_X(F_i^+)$ can be individually changed to $\OO_X(F_i^-)$ just by tensoring by the canonical bundle and the new collection is still an orthogonal exceptional collection by Serre duality. Thus we have at least $2^{10}=1024$ possible choices of orthogonal exceptional collections of line bundles in $\Db(X)$.  Of course, in every case the numerical class of the sum of all ten divisors remains unchanged, equal to $3\delta$.  By \cite[Theorem 3.11]{BP}, there are exactly $2^{13}\cdot3\cdot17\cdot31$ numerical classes of Fano polarizations up to the action of the Weyl group.  So each of these give rise to $1024$ different orthogonal exceptional collections as above.
\end{Ex}

We now want to discuss how to use isotropic $10$-sequences to construct interesting collections of exceptional objects and generalize the picture in Example \ref{ex:EnrC}. The following is probably well-known to experts but we state and prove it here for the sake of completeness.

\begin{Prop}\label{prop:exceptionalcollectionexists}
Let $X$ be an Enriques surface over $\K$ with a Fano polarization $\Delta$. Then $\Db(X)$ contains an admissible subcategory $\cL=\langle\cL_1,\dots,\cL_c\rangle$, where $\cL_1,\dots,\cL_c$ are completely orthogonal admissible subcategories and
\[
\cL_i=\langle L^i_1,\dots,L^i_{n_i}\rangle
\]
where
\begin{itemize}
\item[{\rm (1)}] $L_j^i$ is a line bundle such that $L^i_j=L^i_1\otimes\OO_X(R^i_1+\dots+R^i_{j-1})$ where $R^i_1,\dots,R^i_{j-1}$ is a chain of $(-2)$-curves of $A_{j-1}$ type; 
\item[{\rm (2)}] $\{L^i_1,\dots,L^i_{n_i}\}$ is an exceptional collection; and
\item[{\rm (3)}] $n_1+\dots+n_c=10$.
\end{itemize}
Moreover, if $\Delta$ is ample, then $c=10$, $n_1=\dots=n_{10}=1$ and $\cL$ is generated by an orthogonal exceptional collection $\{L_1,\dots,L_{10}\}$ consisting of line bundles.
\end{Prop}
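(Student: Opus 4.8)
The plan is to read off $\cL$ from the combinatorial data that \cite[Theorem~2.6]{DM} attaches to $\Delta$ and then to check all the required $\Hom$-vanishings by Riemann--Roch. First I would invoke \cite[Theorem~2.6]{DM} to obtain the canonical isotropic $10$-sequence $\cF=\{f_1,\dots,f_{10}\}$ determined by $\Delta$, so that $\delta=\tfrac13(f_1+\cdots+f_{10})$ and $f_i\cdot f_j=1-\delta_{ij}$; a one-line computation then gives $f_i\cdot\delta=3$ for every $i$. If $\Delta$ is ample this is essentially all one needs: by Remark~\ref{rmk:Fanoample}(i) every $f_i$ is nef, one picks a line bundle $L_i$ with class $f_i$, and the problem reduces to the vanishings below with $c=10$ and all $n_i=1$ --- this recovers Example~\ref{ex:EnrC} over an arbitrary $\K$.

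In general $\Delta$ is only nef and the $f_i$ need not be. Let $\Gamma$ be the set of $(-2)$-curves $R$ with $R\cdot\Delta=0$; since $\Delta$ is nef and big their classes span a negative definite sublattice of $\Num(X)$, forming a configuration of $ADE$ type. The key observation is that if $f_i$ is not nef and $R$ is a $(-2)$-curve with $f_i\cdot R<0$, then the reflection $s_R(f_i)$ has $\delta$-degree $3-|f_i\cdot R|\,(\Delta\cdot R)\le 3$, while the Fano inequality $\Delta\cdot F\ge 3$ forbids a nef isotropic class of $\delta$-degree $<3$; since the $\delta$-degree is non-increasing along a sequence of reflections moving $f_i$ into the nef cone and its final value is $3$, every such reflection must have $\Delta\cdot R=0$, i.e.\ involve a curve of $\Gamma$. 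Hence each $f_i$ has a nef representative $\phi_i$ obtained by reflections in curves of $\Gamma$, with $\phi_i\cdot\delta=3$ and $f_i-\phi_i$ an effective sum of curves in $\Gamma$. Grouping the indices by the value of $\phi_i$ partitions $\{1,\dots,10\}=I_1\sqcup\cdots\sqcup I_c$, and --- using the analysis of canonical isotropic $10$-sequences in Sections~2.2 and 2.3 of \cite{DM}, and crucially the inequality $\Delta\cdot F\ge 3$ rather than $\ge 2$, which is what forces the root systems attached to the clusters to be of type $A$ --- one orders the classes in a cluster $I_i$ of size $n_i$ as $g^i_1,\dots,g^i_{n_i}$ with $g^i_{j+1}=g^i_j+R^i_j$ for an $A_{n_i-1}$ chain $R^i_1,\dots,R^i_{n_i-1}$ of curves in $\Gamma$, the chains of distinct clusters being mutually orthogonal and orthogonal to all the $\phi$'s. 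Choosing line bundles $L^i_j$ with class $g^i_j$ and $L^i_{j+1}=L^i_j\otimes\OO_X(R^i_j)$ realizes (1) and (3), with $n_1+\cdots+n_c=10$ because the $I_i$ partition $\{1,\dots,10\}$; as in Example~\ref{ex:EnrC} one can twist each $\cL_i$ independently by $\omega_X$.

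It remains to verify the vanishings. For $a>b$ one has $\RHom(L^i_a,L^i_b)=H^\bullet(X,\OO_X(-C))$ with $C=R^i_b+\cdots+R^i_{a-1}$ a chain of rational curves, so $C^2=-2$ and $p_a(C)=0$; then $H^0(\OO_X(-C))=0$ (as $C$ is effective and nonzero), $H^2(\OO_X(-C))=H^0(\omega_X\otimes\OO_X(C))\hookrightarrow H^0(\omega_C)=0$ via the adjunction sequence together with $H^0(\omega_X)=0$, and $H^1(\OO_X(-C))=-\chi(\OO_X(-C))=0$ by \eqref{eqn:RR}; with $\RHom(L^i_j,L^i_j)=H^\bullet(X,\OO_X)=\K$ this gives that $\{L^i_1,\dots,L^i_{n_i}\}$ is exceptional. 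For complete orthogonality of $\cL_i$ and $\cL_{i'}$ with $i\ne i'$ one reduces to $H^\bullet(X,\OO_X(\pm D))=0$ for $D$ the difference of the classes of $L^{i'}_b$ and $L^i_a$; here $D\cdot\Delta=0$ (every $L^i_j$ has $\delta$-degree $3$), $D^2=-2$ by the cross-cluster orthogonality of the previous step, and --- using $\phi_i\ne\phi_{i'}$ and the shape of the canonical sequence --- none of $D$, $-D$, $K_X+D$, $K_X-D$ is effective, so the same Riemann--Roch argument applies. Finally, an exceptional collection generates an admissible subcategory of the derived category of a smooth projective variety, so each $\cL_i$ is admissible and, by complete orthogonality, so is $\cL=\langle\cL_1,\dots,\cL_c\rangle$, with the asserted decomposition into completely orthogonal pieces; when $\Delta$ is ample $\Gamma=\varnothing$, so every cluster is a singleton, $c=10$, $n_i=1$, and $\{L_1,\dots,L_{10}\}$ is an orthogonal exceptional collection of line bundles.

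The step I expect to be the real obstacle is the one in the second paragraph: extracting the $A$-type clustering of the isotropic $10$-sequence from the Fano polarization, together with the non-effectivity statements that feed the cross-cluster vanishing. This is exactly the point that rests on the combinatorics of canonical isotropic $10$-sequences developed in \cite{DM}; everything else is routine Riemann--Roch bookkeeping.
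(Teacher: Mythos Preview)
Your proposal is correct and follows essentially the same route as the paper: extract the canonical isotropic $10$-sequence from $\Delta$, cluster it into $A$-type blocks using the combinatorics of \cite{DM}/\cite{CD89}, and verify the Hom-vanishings by Riemann--Roch together with non-effectivity of the relevant difference classes. The only differences are cosmetic---the paper cites \cite[Lemma~3.3.1]{CD89} for the $A$-type decomposition and \cite[Theorem~B]{HP} for the within-block exceptionality, whereas you sketch both (the reflection argument for the clustering and the adjunction computation for $H^\bullet(X,\OO_X(-C))$) directly; for the cross-block non-effectivity the paper argues that an effective difference would be a root and hence place the two classes in the same Weyl orbit, which is the precise form of the claim you defer to \cite{DM}.
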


\begin{proof}
By Remark \ref{rmk:Fanoample} (i), $\Delta$ determines a unique (up to reordering) isotropic $10$-sequence $\cF\subset\Num(X)$. By \cite[Lemma 3.3.1]{CD89} (see also \cite[Section 2]{DM} for an extensive discussion), such an isotropic $10$-sequence can be written as a disjoint union
\begin{equation}\label{eqn:FP2}
\cF=\cF_1\sqcup\dots\sqcup\cF_c,
\end{equation}
where, for $i=1,\dots,c$,
\[
\cF_i=\{f^i_1,\dots,f^i_{n_i}\}
\]
and letting $F_j^{i,\pm}$ be the two lifts of $f_j^i$, we have
\begin{itemize}
\item[{\rm (a)}] $F^{i,\pm}_j=F^{i,\pm}_1+ R^i_1+\dots+R^i_{j-1}$, where $R^i_1,\dots,R^i_{j-1}$ is a chain of $(-2)$-curves of $A_{j-1}$ type;
\item[{\rm (b)}] $f_1^i$ is nef for all $i=1,\dots,c$, and the classes $f^i_j$, for $j=1,\dots,n_i$, are all the elements of $\cF$ which are conjugate to $f^i_1$ under the Weyl group action and ordered in such a way that $f^i_{j+1}-f^i_j=R^i_j$; and
\item[{\rm (c)}] $n_1+\dots+n_c=10$.
\end{itemize}

Let $L_j^i=\OO_X(F_j^{\pm})$. Since $H^s(X,\OO_X)= 0$ for $s=1,2$, all line bundles are exceptional objects.  Moreover, by construction, $\RHom(L^i_j,L^i_k)\cong 0$, for all $i=1,\dots,c$ and all $j>k\in\{1,\dots,n_i\}$ and so $\{L^i_1,\dots,L^i_{n_i}\}$ is an exceptional collection for $i=1,\dots,c$ (see \cite[Theorem B]{HP}). By (a) and (c), conditions (1), (2), and (3) in the statement are satisfied by the $L^i_j$'s. Thus it remains to show that, if we set $\cL_i=\langle L^i_1,\dots,L^i_{n_i}\rangle$, then the block $\cL_i$ is completely orthogonal to $\cL_j$ when $i\neq j\in\{1,\dots,c\}$.

To this extent, it is enough to show that $\RHom(L^j_l,L^k_m)\cong 0$, for all $j\neq k\in\{1,\dots,c\}$, $l=1,\dots,n_j$ and $m=1,\dots,n_k$. But
\[
\Hom(L^j_l,L^k_m)\cong H^0(X,L^k_m\otimes(L^j_l)^\vee)=0
\]
because $f^k_m-f^j_l$ is not effective by (b). Indeed, if $f^k_m-f^j_l$ is effective, then such a class is a $(-2)$-curve. Thus $f^k_m$ and $f^j_l$ would be in the same orbit under the action of the Weyl group, contradicting the fact that $j\neq k$.

Likewise, $\Hom(L^k_m,L^j_l\otimes\OO_X(K_X))= 0$ because $f^j_l-f^k_m$ is not effective. Therefore, it follows from Serre duality that 
\[
\Ext^2(L^j_l,L^k_m)^\vee\cong\Hom(L^k_m,L^j_l\otimes\OO_X(K_X))\cong 0.
\]
By Riemann\textendash Roch \eqref{eqn:RR}, we have $\chi(L^j_l,L^k_m)=\chi(L^k_m\otimes(L^j_l)^\vee)=\frac{1}{2}(f^k_m-f^j_l)^2+1=0$, so we get orthogonality as claimed.

If $\Delta$ is ample then, by Remark \ref{rmk:Fanoample} (i) all the vectors in $\cF$ are nef. By \cite[Lemma 3.3.1]{CD89}, we get $c=10$ in \eqref{eqn:FP2} and $n_i=1$ for $i=1,\dots,10$.  It follows from the argument above that $\cL=\{L_1,\dots,L_{10}\}$ is an orthogonal exceptional collection\footnote{Note that the case when $X$ has an ample polarization was also treated in  \cite[Lemma 3.7]{MST}}. This concludes the proof.
\end{proof}

\begin{Rem}\label{rmk:noexc}
(i) Assume that $X$ has an ample Fano polarization so that the admissible subcategory $\cL$ in Proposition \ref{prop:exceptionalcollectionexists} is generated by a collection of $10$ orthogonal exceptional objects $L_1,\dots,L_{10}$. It is clear that the choice of the $L_i$'s is not unique. For example, we can replace any $L_i$ by $L_i\otimes\OO_X(K_X)$. Thus, as in Example \ref{ex:EnrC}, we have at least $2^{10}=1024$ choices of $10$ orthogonal exceptional objects in $\Db(X)$.

(ii) In \cite{Ko86}, the author provides e xamples of Enriques surfaces for which no Fano polarization $\Delta$ has a $c$ (as in Proposition \ref{prop:exceptionalcollectionexists}) equal to $10$. By Remark \ref{rmk:Fanoample} (i), it follows that $X$ does not have an ample Fano polarization and Proposition \ref{prop:exceptionalcollectionexists} does not provide an orthogonal exceptional collection of line bundles of length $10$.
\end{Rem}

In conclusion, let $\LL$ be the admissible subcategory in Proposition \ref{prop:exceptionalcollectionexists}, and define 
\begin{equation*}
\Ku(X,\cL):=\LL^\perp=\langle \cL_1,\dots,\cL_c\rangle^\perp.
\end{equation*}
Then $\Db(X)$ admits a semiorthogonal decomposition: 
\begin{equation}\label{eqn:semicond}
\Db(X)=\langle\Ku(X,\cL),\cL\rangle.
\end{equation}

Moreover, if $X$ has an ample Fano polarization (e.g.\ if $X$ is an unnodal or a generic nodal Enriques surface), then again by Proposition \ref{prop:exceptionalcollectionexists} the admissible subcategory $\LL=\{L_1,\dots,L_{10}\}$ satisfies \eqref{cond} in the Introduction.

For an extensive discussion about some remarkable properties of $\Ku(X,\cL)$ for nodal Enriques surfaces, one can have a look at \cite{KI15}.

\subsection{Artin\textendash Mumford quartic double solids}\label{subsec:AMdoublesolids}

Let us now recall the construction of Artin-Mumford quartic double solids from \cite{KI15}.  Consider two vector spaces $V$ and $W$ of dimension $4$ and the divisor
\[
Q_s\subseteq\P(V)\times\P(W)
\]
of bidegree $(2,1)$ on $\P(V)\times\P(W)$ corresponding to a global section $s\in H^0\left(\OO_{\P(V)\times\P(W)}(2,1)\right)$. Clearly, $Q_s$ can be thought of as a family of quadrics in $\P(V)$ parameterized by $\P(W)$.  The degeneration locus of this family of quadrics is a (singular) quartic surface $D_s\subseteq\P(W)$ called a \emph{quartic symmetroid}. Consider further the (singular) double covering $Y_s\to\P(W)$ ramified over $D_s$.  For generic $s$, $D_s$ has $10$ singular points, corresponding to the quadrics of corank $2$, and we will refer to $Y_s$ as a \emph{general Artin\textendash Mumford quartic double solid}. It was explained in \cite{Cos} that for generic $s$, the $Y_s$ are precisely the Artin\textendash Mumford conic bundles constructed in \cite{AM} as examples of unirational, but not rational, conic bundles. From now on, we will assume that $s$ is general so that the Artin\textendash Mumford quartic double solid is general as well and, for simplicity, we will remove the section $s$ from the notation.

Now $Y$ is singular at precisely 10 points above the 10 singular points of $D$.  Let $Y'$ be the blow-up of $Y$ at these 10 singular points. By \cite[Lemma 3.6]{KI15}, the variety $Y'$ is the double covering of the blow-up of $\P(W)$ at the $10$ singular points of $D$ ramified over the proper preimage (and not just the proper transform) $D'$ of $D$. So we have the following diagram:
\begin{equation}\label{eqn:diablow}
\xymatrix{
	Y'\ar[r]\ar[d]_-{2:1}&Y\ar[d]^-{2:1}\\
	\text{Bl}_{\text{10 pts}}(\P(W))\ar[r]&\P(W).
}
\end{equation}
It follows that $D'$ is the blow-up of $D$ at its $10$ singular points.  As $s$ is assumed to be generic, $D'$ is a smooth K3 surface with a fixed-point free involution $\tau$. Indeed, the surface $D'$ can be seen as the zero locus of the section $s$ viewed as a global section of $W^\vee\otimes\OO_{\P(V)\times\P(V)}(1,1)$ and $\tau$ is just the restriction of the transposition of factors in $\P(V)\times\P(V)$.  The quotient $X:=D'/\langle\tau\rangle$ is then an Enriques surface which will be called the \emph{Enriques surface associated} to $Y$. These Enriques surfaces are nodal (see \cite[Remark 4.1]{KI15} and the reference therein).

We now set some more notation.  Let $e_i$ (resp. $Q_i$) be the class of the $i$-th exceptional divisor of $\text{Bl}_{\text{10 pts}}(\P(W))\to\P(W)$ (resp. $Y'\to Y$).  Each $Q_i$ is isomorphic to $\P^1\times \P^1$ because it is the exceptional divisor of the blow-up along an ordinary double point. Let $G_i=\OO_{Q_i}(-1,0)$.  By \cite[Corollary 3.8 and Lemma 3.12]{KI15}, we have a semiorthogonal decomposition
\begin{equation}\label{eqn:semiAM1}
\Db(Y')=\langle\Ku(Y'),\{G_i\}_{i=1}^{10},\OO_{Y'}(-h),\{\OO_{Y'}(-e_i)\}_{i=1}^{10},\OO_{Y'}\rangle,
\end{equation}
where $h$ is the pull-back of the hyperplane class in $\P(W)$. The subcategory $\Ku(Y')$ is defined to be the right orthogonal to all the other objects while both $\{G_i\}_{i=1}^{10}$ and $\{\OO_{Y'}(-e_i)\}_{i=1}^{10}$ are orthogonal exceptional collections. Set
\begin{equation}\label{eqn:AQDS}
\AA_{Y'}:=\langle\Ku(Y'),\{G_i\}_{i=1}^{10}\rangle
\end{equation}
and denote by $\mathsf{S}_{\AA_{Y'}}$ its Serre functor.

Note that the action on $\Db(Y')$ of the Galois involution on $Y'$ given by the double cover $Y'\to\text{Bl}_{\text{10 pts}}(\P(W))$ in \eqref{eqn:diablow} preserves the exceptional collection $\{\OO_{Y'}(-h),\{\OO_{Y'}(-e_i)\}_{i=1}^{10},\OO_{Y'}\}$ in \eqref{eqn:semiAM1} because they are pull-backs from $\text{Bl}_{\text{10 pts}}(\P(W))$. Thus the Galois involution preserves the residual category $\AA_{Y'}$. We will explain later how $\mathsf{S}_{\AA_{Y'}}$ is related to such an involution.

In \cite{KI15}, the authors exhibit an embedding of $X$ inside the Grassmannian $\mathrm{Gr}(2,V)$, providing it with a Reye polarization (see \cite[Section 2.4]{DM} for the definition), which is, in particular, an ample Fano polarization. By Proposition \ref{prop:exceptionalcollectionexists}, the Enriques surface $X$ associated to $Y$ has an explicit semiorthogonal decomposition
\[
\Db(X)=\langle\Ku(X,\LL),\LL\rangle,
\]
where $\LL=\{L_1,\dots,L_{10}\}$ is an orthogonal exceptional collection of line bundles. The following statement collects the main results in \cite{KI15} concerning $\AA_{Y'}$ and $\Ku(Y')$.

\begin{Thm}[\cite{KI15}, Corollary 3.8, Theorem 4.3]\label{thm:IK}
In the above setting:
\begin{enumerate}
\item[{\rm (1)}] There is an isomorphism of exact functors $\mathsf{S}_{\AA_{Y'}}\cong\mathsf{I}[2]$ where $\mathsf{I}$ is the non-trivial involution of $\AA_{Y'}$ (i.e.\ $\mathsf{I}\circ\mathsf{I}\cong\id_{\AA_{Y'}}$) induced by the Galois involution of $Y'$.

\item[{\rm (2)}] There is an equivalence $\Ku(Y')\cong\Ku(X,\LL)$ of Fourier\textendash Mukai type.
\end{enumerate}
\end{Thm}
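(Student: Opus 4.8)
The two assertions have rather different natures: (1) is a Serre-functor computation that rests only on the quadric fibration underlying $Y$, whereas (2) is the construction of the equivalence itself. I would treat them separately.

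For (1), the plan is to read off $\mathsf{S}_{\AA_{Y'}}$ from the decomposition \eqref{eqn:semiAM1}. Writing $\BB_{Y'}:=\langle\OO_{Y'}(-h),\{\OO_{Y'}(-e_i)\}_{i=1}^{10},\OO_{Y'}\rangle$, one has $\Db(Y')=\langle\AA_{Y'},\BB_{Y'}\rangle$ and $\AA_{Y'}=\BB_{Y'}^{\perp}$, so the right adjoint of the inclusion $\AA_{Y'}\hookrightarrow\Db(Y')$ is the left mutation $\mathsf{L}_{\BB_{Y'}}$; combined with $\mathsf{S}_{Y'}(-)=(-)\otimes\omega_{Y'}[3]$ this gives $\mathsf{S}_{\AA_{Y'}}(A)\cong\mathsf{L}_{\BB_{Y'}}(A\otimes\omega_{Y'}[3])$ for $A\in\AA_{Y'}$, which is computable by \eqref{eqn:LeftMutationExceptional} since $\BB_{Y'}$ is generated by an orthogonal exceptional collection. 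The geometric mechanism is the double cover $p\colon Y'\to Z:=\mathrm{Bl}_{10\,\mathrm{pts}}(\P(W))$ of \eqref{eqn:diablow}, branched along the K3 surface $D'$: one has $\omega_{Y'}\cong p^{*}\OO_Z(-2h+\sum_i e_i)$, so $\omega_{Y'}$ is pulled back from $Z$ (hence fixed by the Galois involution $\iota^{*}$) and the ramification divisor satisfies $\OO_{Y'}(-R)\cong\omega_{Y'}$; the scheme $Y'\times_Z Y'=\Delta_{Y'}\cup\Gamma_{\iota}$ (with intersection $R\cong D'$) then yields, from its Mayer--Vietoris triangle, a functorial triangle
\begin{equation*}
\mathcal F\otimes\omega_{Y'}\longrightarrow p^{*}p_{*}\mathcal F\longrightarrow\iota^{*}\mathcal F\xrightarrow{\ [1]\ }.
\end{equation*}
Feeding $A\in\AA_{Y'}$ into this and projecting to $\AA_{Y'}$ reduces the identity $\mathsf{S}_{\AA_{Y'}}(A)\cong\iota^{*}A[2]$ to a computation of $\mathsf{L}_{\BB_{Y'}}(p^{*}p_{*}A)$; here one uses that $A$ is right-orthogonal to $\OO_{Y'}(-h),\OO_{Y'}(-e_i),\OO_{Y'}$, so that $p_{*}A$ lies in a controlled part of $\Db(Z)$. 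With this in hand, setting $\mathsf{I}:=\iota^{*}|_{\AA_{Y'}}$ --- which preserves $\AA_{Y'}$ because $\iota^{*}$ fixes the generators of $\BB_{Y'}$ --- the relation $\iota^2=\mathrm{id}$ forces $\mathsf{I}\circ\mathsf{I}\cong\mathrm{id}$. (Conceptually, this is Kuznetsov's computation of the Serre functor of the Clifford component of the even quadric fibration $Q_s\to\P(W)$.)

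For (2) --- which is the substantive assertion --- I would follow \cite{KI15} and use the sheaf of even Clifford algebras $\mathcal C_0$ of the quadric fibration $Q_s\subseteq\P(V)\times\P(W)\to\P(W)$. Kuznetsov's semiorthogonal decomposition of a quadric fibration of relative dimension $2$ exhibits $\Db(Q_s)$ as built from the \emph{Clifford component} $\Db(\P(W),\mathcal C_0)$ and two copies of $\Db(\P(W))$, all components and projections being of Fourier--Mukai type. The centre of $\mathcal C_0$ cuts out the singular double solid $Y$, and for general $s$ the Brauer class of $\mathcal C_0$ is trivial on the resolution $Y'$; this realizes $\Db(\P(W),\mathcal C_0)$ as a Fourier--Mukai piece of $\Db(Y')$, which --- after mutating past and discarding the exceptional objects $\{G_i\}_{i=1}^{10}$ --- is $\Ku(Y')$. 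On the other hand the Reye congruence presents $X$ inside $\mathrm{Gr}(2,V)$ carrying a natural module over a pull-back of $\mathcal C_0$ (encoding the families of quadrics of the web containing the lines parametrized by $X$), and this yields a Fourier--Mukai functor $\Db(\P(W),\mathcal C_0)\to\Db(X)$ whose essential image, after removing $\LL=\{L_1,\dots,L_{10}\}$, is $\Ku(X,\LL)$. Composing these identifications together with the intervening mutations produces the Fourier--Mukai equivalence $\Ku(Y')\cong\Ku(X,\LL)$.

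The hardest step is Part (2): one must carefully match the semiorthogonal decompositions on the quadric-fibration and the congruence sides and track the mutations carrying the two residual Clifford components onto $\Ku(Y')$ and $\Ku(X,\LL)$; and, crucially, one must check that the sheaf of even Clifford algebras has trivial Brauer class on $Y'$ for general $s$, so that the Morita equivalence is an honest equivalence of Fourier--Mukai type. The Fourier--Mukai property is otherwise inexpensive, each elementary step (quadric-fibration projection functors, the Azumaya Morita equivalence, the incidence-correspondence functors) being given by an explicit kernel, while projections onto admissible subcategories are of Fourier--Mukai type by \cite[Theorem 7.1]{Kuz11}, cf.\ Remark \ref{rmk:FMfun0onorth}.
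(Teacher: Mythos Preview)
The paper does not prove this theorem at all: as the header indicates, both assertions are quoted verbatim from \cite{KI15} (Corollary~3.8 and Theorem~4.3 there), and the present paper only \emph{uses} them. So there is no ``paper's own proof'' to compare against; what you have written is a sketch of the arguments of Ingalls--Kuznetsov, which is exactly what the citation points to.

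Your outline is broadly faithful to \cite{KI15}. For (1), the Serre-functor identity there is obtained essentially as you describe: one writes $\mathsf{S}_{\AA_{Y'}}=\mathsf{L}_{\BB_{Y'}}\circ\mathsf{S}_{Y'}$ and exploits the double-cover structure $p\colon Y'\to\mathrm{Bl}_{10\,\mathrm{pts}}(\P(W))$ and the relation $\omega_{Y'}\cong\OO_{Y'}(-R)$ to trade $(-)\otimes\omega_{Y'}$ for $\iota^{*}$ modulo objects pulled back from the base; the functorial triangle you wrote is the standard one for a flat double cover and makes the comparison with $\iota^{*}(-)[2]$ go through after projecting to $\AA_{Y'}$. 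For (2), the equivalence in \cite{KI15} is indeed built through the even Clifford algebra $\mathcal C_0$ of the quadric fibration: Kuznetsov's quadric-fibration decomposition identifies the nontrivial part of $\Db(Y')$ with $\Db(\P(W),\mathcal C_0)$, and the Reye-congruence description of the associated Enriques surface supplies a Fourier--Mukai functor from the Clifford side to $\Db(X)$; matching the two semiorthogonal decompositions and mutating away the ten exceptional objects on each side yields $\Ku(Y')\cong\Ku(X,\LL)$. Your caveats about the delicate steps (triviality of the Brauer class on $Y'$ for general $s$, and tracking the mutations) are the right ones; the Fourier--Mukai property is, as you say, automatic from the kernel description of each step together with \cite[Theorem~7.1]{Kuz11}.

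In short: your proposal is a correct high-level summary of the cited proofs, and since the paper itself supplies no argument beyond the citation, there is nothing further to compare.
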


This result strongly suggests that $\AA_{Y'}$ should be very much related to the derived category of $X$. In other words, it is very suggestive to guess that the correspondence between general Artin\textendash Mumford quartic double solids and associated Enriques surfaces might have a nice categorical counterpart. This was made precise by the following \cite[Conjecture 4.2]{KI15}:

\begin{Con}[Ingalls\textendash Kuznetsov]\label{conj:IK}
If $Y'$ is the blow-up of a general Artin\textendash Mumford quartic double solid $Y$ at its $10$ singular points, then there is an equivalence $\AA_{Y'}\cong\Db(X)$, where $X$ is the Enriques surface associated to $Y$.
\end{Con}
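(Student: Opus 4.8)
The plan is to promote the Fourier--Mukai equivalence $\mathsf{F}_0\colon\Ku(Y')\xrightarrow{\sim}\Ku(X,\cL)$ of Theorem \ref{thm:IK}(2) to an equivalence $\AA_{Y'}\cong\Db(X)$ by adding back, one at a time, the ten completely orthogonal exceptional objects in the semiorthogonal decompositions $\AA_{Y'}=\langle\Ku(Y'),G_1,\dots,G_{10}\rangle$ and $\Db(X)=\langle\Ku(X,\cL),L_1,\dots,L_{10}\rangle$. By Remark \ref{rmk:FMfun0onorth} we may assume $\mathsf{F}_0$ is the restriction of a Fourier--Mukai functor $\Phi_0\colon\Db(Y')\to\Db(X)$ with $\Phi_0({}^\perp\Ku(Y'))=0$. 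Writing $\cA_k:=\langle\Ku(Y'),G_1,\dots,G_k\rangle$ and $\BB_k:=\langle\Ku(X,\cL),L_{\sigma(1)},\dots,L_{\sigma(k)}\rangle$ for a permutation $\sigma$ of $\{1,\dots,10\}$ to be determined, I would run an induction: at the $k$-th step $\Phi_k\colon\Db(Y')\to\Db(X)$ is a Fourier--Mukai functor with $\Phi_k({}^\perp\cA_k)=0$ restricting to an equivalence $\cA_k\xrightarrow{\sim}\BB_k$, and one applies Proposition \ref{prop:extension2} with $(\cA,E,\BB,F)=(\cA_k,G_{k+1},\BB_k,L_{\sigma(k+1)})$ — complete orthogonality of $\cG$ and of $\cL$ guarantees $G_{k+1}\in{}^\perp\cA_k$ and $L_{\sigma(k+1)}\in{}^\perp\BB_k$ — to produce $\Phi_{k+1}$ restricting to an equivalence $\cA_{k+1}\xrightarrow{\sim}\BB_{k+1}$ with $\Phi_{k+1}({}^\perp\cA_{k+1})=0$ (by Proposition \ref{prop:extension}(2)). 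After ten steps, $\Phi_{10}|_{\AA_{Y'}}\colon\AA_{Y'}\xrightarrow{\sim}\langle\Ku(X,\cL),L_1,\dots,L_{10}\rangle=\Db(X)$ is the desired Fourier--Mukai equivalence, proving Conjecture \ref{conj:IK}.

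The only hypothesis of Proposition \ref{prop:extension2} requiring work is (b): an isomorphism $\rho\colon\Phi_k(\alpha\alpha^!(G_{k+1}))\xrightarrow{\sim}\beta\beta^!(L_{\sigma(k+1)})$. First I would observe that, since $G_{k+1}$ is completely orthogonal to $G_1,\dots,G_k$, its right-adjoint projection $\alpha\alpha^!(G_{k+1})$ onto $\cA_k$ is independent of $k$ and coincides with the image $\Psi_{Y'}(G_{k+1})\in\Ku(Y')$ of $G_{k+1}$ under the gluing functor of $\AA_{Y'}=\langle\Ku(Y'),\cG\rangle$: the mutation triangle $\alpha_{\Ku(Y')}\Psi_{Y'}(G_{k+1})\to G_{k+1}\to\mathsf{L}_{\Ku(Y')}(G_{k+1})$ already has its third term right-orthogonal to $\cA_k$, because $\Hom(G_i,\mathsf{L}_{\Ku(Y')}(G_{k+1}))=0$ for $i\le k$. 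The analogous statement on the $X$-side gives $\beta\beta^!(L_{\sigma(k+1)})=\Psi_X(L_{\sigma(k+1)})$, and since $\Phi_k$ restricts to $\mathsf{F}_0$ on $\Ku(Y')$ for every $k$ (Proposition \ref{prop:extension}(1)), hypothesis (b) for all $k$ at once amounts to a single family of isomorphisms
\[
\mathsf{F}_0(\Psi_{Y'}(G_i))\;\cong\;\Psi_X(L_{\sigma(i)}),\qquad i=1,\dots,10 .
\]

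To construct these I would classify the $3$-spherical objects — objects $S$ with $\RHom(S,S)\cong\K\oplus\K[-3]$ — in $\Ku(X,\cL)$; this is the technical heart of the argument. Using Theorem \ref{thm:IK}(1) (or a direct computation from $\mathsf{S}_X^2=[4]$ together with the orthogonality of $\cL$), the Serre functor of $\Ku(X,\cL)$ has the form $\tau\circ[2]$ for a non-trivial involution $\tau$. I would show that the $\Psi_X(L_j)$ are $3$-spherical and, conversely, that every $3$-spherical object of $\Ku(X,\cL)$ is, up to shift, one of the projections $\Psi_X(L)$ attached to an exceptional object $L$ completing $\Ku(X,\cL)$ to $\Db(X)$ — and likewise for $\Ku(Y')$ and the $\Psi_{Y'}(G_i)$ (or simply transport the classification through $\mathsf{F}_0$). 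Since an exact equivalence sends $3$-spherical objects to $3$-spherical objects, $\mathsf{F}_0$ matches $\{\Psi_{Y'}(G_i)\}_i$ with $\{\Psi_X(L_j)\}_j$ up to shift; the shift ambiguity is then removed by comparing classes in the numerical Grothendieck group, on which a Fourier--Mukai functor acts by an isometry, and by noting that among its shifts only $\Psi_X(L_{\sigma(i)})$ itself is right-orthogonal to $\BB_k$ with one-dimensional degree-$0$ self-$\Hom$. This produces $\sigma$ and the required isomorphisms, and the induction above then applies verbatim.

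The main obstacle is the converse half of the $3$-spherical classification: ruling out ``exotic'' $3$-spherical objects in $\Ku(X,\cL)$ not of the form $\Psi_X(L_j)[n]$, and guaranteeing that the ten objects singled out by $\mathsf{F}_0$ are the projections of a genuine completion of $\Ku(X,\cL)$ (and not, say, their twists by $\omega_X$). I expect this to require a hands-on study of $\Ku(X,\cL)$ — for instance through the fixed-point-free involution on the K3 double cover of $X$, or via the two-term resolutions of the $\Psi_X(L_j)$ coming from \eqref{eqn:LeftMutationExceptional} — together with a finiteness bound on the numerical classes that spherical objects can carry. Once this classification is secured, everything else (the inductive extension and the verification of the hypotheses of Propositions \ref{prop:extension} and \ref{prop:extension2}) is formal.
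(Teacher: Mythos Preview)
Your plan is exactly the paper's proof, packaged there as Theorem~\ref{thm:gen}: identify the right-adjoint projections $\kappa^!(L_i)\cong S_i$ (Remark~\ref{rmk:isomun}(ii)) as the $(2m-1)$-spherical objects, use the equivalence to match them up to permutation and shift, and run the induction via Proposition~\ref{prop:extension2} just as you describe. The one refinement worth noting is that your ``main obstacle''---the classification of $3$-spherical objects---is carried out in Proposition~\ref{prop:10Ts} by a purely categorical argument using only Setup~\ref{setup} (an $m$-Enriques category with an orthogonal exceptional collection), so no K3 cover, numerical lattice, or resolution analysis is needed and the same proof applies verbatim on the $\cA_{Y'}$ side; the shift ambiguity is harmless (simply take $F=L_{\sigma(i)}[k_i]$ in Proposition~\ref{prop:extension2}), and your $\omega_X$-twist worry evaporates since Proposition~\ref{prop:10Ts} already pins every $3$-spherical object to a shift of one of the ten $S_j$ attached to the \emph{fixed} collection~$\cL$.
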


This conjecture was proved by Hosono and Takagi in \cite{HT}. As we explained in the introduction, one of the aims of this paper is to give a shorter, simpler proof (see Theorem \ref{thm:IKConj} and Theorem \ref{thm:IKConjgen} in Section \ref{subsec:mainthm1} for a more precise statement).

\section{Spherical objects in Enriques categories}\label{sec:3spherical}

The proof of Theorem \ref{thm:derived_torreli} is based on the classification of spherical objects in $\Ku(X,\cL)$, for $X$ an Enriques surface. We will explain this in a slightly more general setting which is suited to deal with the case of Artin\textendash Mumford quartic double solids as well.

Recall that if $\AA$ is a $\K$-linear triangulated category with Serre functor $\mathsf{S}_\AA$ and $d$ is a positive integer, we have the following.

\begin{Def}
An object $E\in\AA$ is \emph{$d$-spherical} if
\[
\Hom(E,E[p])\cong\begin{cases}
	\K & \mbox{if } p=0, d, \\
	0  & \mbox{otherwise}\end{cases}
\]
and $\mathsf{S}_\AA(E)\cong E[d]$.
\end{Def}

In particular, the graded $\Ext$-algebra of a $d$-spherical object is isomorphic to the cohomology of a sphere of dimension $d$.

We are interested in studying this kind of objects in categories which resemble the derived category of an Enriques surface $X$. 
 
\begin{Def}\label{def:EnriquesCat}
A $\K$-linear triangulated category $\AA$ is an \emph{$m$-Enriques category}, for $m\geq 0$ a rational number in $\frac{1}{2}\Z$, if it possesses a Serre functor $\mathsf{S}_{\AA}$ with the property that $\mathsf{S}^2_{\AA}\cong [2m]$.
\end{Def}

\begin{Rem}\label{rmk:noex}
(i) The expert reader has certainly noticed that $m$-Enriques categories are in particular fractional Calabi\textendash Yau categories of dimension $m$ (see \cite{KuzCYcat}).

(ii) If $\AA$ is an $m$-Enriques category with $m\in\Z$, then we can write $\mathsf{S}=\mathsf{I}[m]$, where $\mathsf{I}$ is an autoequivalence of $\AA$ such that $\mathsf{I}^2\cong\id$. Indeed, we can just take $\mathsf{I}:=\mathsf{S}[-m]$.

(iii) In the special case when $\AA$ is an $m$-Enriques category, $m\geq 1$ is an integer and $\mathsf{S}\cong[m]$, then, by Serre duality, $\AA$ cannot contain exceptional objects.
\end{Rem}

\begin{Ex}\label{ex:ECat}
There are several examples of $m$-Enriques categories available in the literature. We discuss some of them here. The first two are particularly relevant given the scope of this paper.

(i) If $X$ is an Enriques surface, then we already observed that the dualizing sheaf $\omega_X=\mathcal{O}_X(K_X)$ is $2$-torsion. Thus the Serre functor $\mathsf{S}_X(-)=(-)\otimes\omega_X[2]$ satisfies $\mathsf{S}_X^2\cong[4]$ and $\Db(X)$ is a $2$-Enriques category.

(ii) By the discussion in Section \ref{subsec:AMdoublesolids}, if $Y'$ is the blow-up of a general Artin\textendash Mumford quartic double solid at its $10$ singular points, then the category $\cA_{Y'}$ defined in \eqref{eqn:AQDS} is a $2$-Enriques category by Theorem \ref{thm:IK}(1).

(iii) Let $X$ be a Gushel\textendash Mukai variety of dimension $n$. It is a smooth intersection of the cone in $\mathbb{P}^{10}$ over the Grassmannian $\mathrm{Gr}(2,5)\hookrightarrow\mathbb{P}^9$ with $\mathbb{P}^{n+4}\subseteq\mathbb{P}^{10}$ and a quadric hypersurface $Q\subseteq\mathbb{P}^{n+4}$. As explained in \cite{KP}, $\Db(X)$ always contains an admissible subcategory $\cA_X$ which, by \cite[Proposition 2.6]{KP}, is a $2$-Enriques category. If $n$ is even, then $\mathsf{S}_{\cA_X}=[2]$ and thus we are in the situation described in Remark \ref{rmk:noex} (iii). If $n$ is odd, then $\mathsf{S}_{\cA_X}$ is not a shift.

(iii) Other examples of $m$-Enriques categories with $m\neq 2$ can be found in \cite{KuzCYcat}.
\end{Ex}

We now work in the following setting:

\begin{Set}\label{setup}
$\AA$ is an $m$-Enriques category with a semiorthogonal decomposition $$\AA=\langle\cK,L_1,\dots,L_N\rangle,$$ where $\KK$ is admissible, $N\geq 1$ is an integer, and $\cL:=\{L_1,\dots,L_N\}$ is an orthogonal exceptional collection. 

\end{Set}

Note that since $\cA$ contains at least one exceptional object, by Remark \ref{rmk:noex} (iii), the Serre functor of $\cA$ cannot be isomorphic to a shift.

\begin{Ex}\label{ex:EnriquesCat}
By Sections \ref{subsec:Enriques} and \ref{subsec:AMdoublesolids} and Example \ref{ex:ECat} (i) and (ii), $\AA=\Db(X)$, where $X$ is either an Enriques surface, and $\AA=\AA_{Y'}$, where $Y'$ is the blow-up of an Artin\textendash Momford quartic double solid, are both as in Setup \ref{setup}. Indeed, in both cases, the $2$-Enriques categories in Example \ref{ex:ECat} (i) and (ii) contain $10$ orthogonal exceptional objects.
\end{Ex}

Suppose that we are in Setup \ref{setup}, and let us simplify notation by setting $\mathsf{S}:=\mathsf{S}_{\AA}$ and letting $\kappa\colon\cK\to\cA$ be the natural inclusion with left adjoint $\kappa^*$. By Serre duality, we have $\Hom(L_i,\mathsf{S}(L_i))\cong\K$, so we can define $S_i\to L_i$ to be the cocone of the unique non-zero (up to scaling) morphism $L_i\to \mathsf{S}(L_i)$.  These objects fit in the distinguished triangle
\begin{equation}\label{eq:mutation1}
S_i\to L_i\to\mathsf{S}(L_i).
\end{equation}
All the objects in \eqref{eq:mutation1} are clearly contained in $\cA$. We claim that more is true

\begin{Lem}\label{lem:inK}
For $i=1,\dots,N$, we have $S_i\in\KK$.
\end{Lem}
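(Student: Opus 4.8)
The plan is to show that $S_i$ lies in $\KK = \langle L_1,\dots,L_N\rangle^{\perp}$ inside $\AA$, i.e.\ that $\Hom(L_j, S_i[p]) = 0$ for all $j$ and all $p\in\Z$. Since $S_i$ sits in the triangle \eqref{eq:mutation1}, namely $S_i\to L_i\to\mathsf{S}(L_i)$, applying $\Hom(L_j,-[p])$ gives a long exact sequence, so it suffices to understand $\Hom(L_j, L_i[p])$ and $\Hom(L_j,\mathsf{S}(L_i)[p])$ and the connecting map between them. The first of these is trivial to handle: since $\{L_1,\dots,L_N\}$ is an \emph{orthogonal} exceptional collection, $\Hom(L_j,L_i[p]) = \K$ if $j=i$ and $p=0$, and is $0$ otherwise.

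The second term requires Serre duality. We have $\Hom(L_j,\mathsf{S}(L_i)[p]) \cong \Hom(L_i[-p], L_j)^{\vee} = \Hom(L_i, L_j[p])^{\vee}$, which by orthogonality again is $\K$ when $j=i$ and $p=0$, and $0$ otherwise. So the only potential obstruction to $\Hom(L_j, S_i[p]) = 0$ occurs when $j=i$, where the relevant piece of the long exact sequence reads
\[
\Hom(L_i, L_i[-1]) \to \Hom(L_i,\mathsf{S}(L_i)[-1]) \to \Hom(L_i,S_i) \to \Hom(L_i,L_i) \xrightarrow{\;\mu\;} \Hom(L_i,\mathsf{S}(L_i)),
\]
where the outer groups $\Hom(L_i,L_i[-1])$ and $\Hom(L_i,\mathsf{S}(L_i)[-1])$ both vanish. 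Thus $\Hom(L_i,S_i) = \ker\mu$, and we must show $\mu\colon \Hom(L_i,L_i)\to\Hom(L_i,\mathsf{S}(L_i))$ is injective (both sides being $\K$, injective is equivalent to an isomorphism). But $\mu$ is precisely postcomposition with the morphism $L_i\to\mathsf{S}(L_i)$ used to define $S_i$, which was chosen to be the nonzero element of the $1$-dimensional space $\Hom(L_i,\mathsf{S}(L_i))$. Postcomposing $\id_{L_i}$ with this nonzero map yields the map itself, hence is nonzero, so $\mu$ is an isomorphism and $\ker\mu = 0$. This also forces $\Hom(L_i,S_i[1]) = 0$ from the next term of the sequence, and all other $\Hom(L_i,S_i[p]) = 0$ follow immediately since both neighbouring groups vanish.

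The step I expect to require the most care is the bookkeeping of the connecting homomorphism and the verification that the relevant map $\mu$ is genuinely nonzero — i.e.\ that the ``unique nonzero morphism $L_i\to\mathsf{S}(L_i)$'' really does induce an isomorphism on $\Hom(L_i,-)$ rather than being killed. This is essentially the observation that an exceptional object together with its image under the Serre functor behaves like a ``spherical-type'' pair, and the cocone $S_i$ is built to split off exactly the part of $L_i$ that pairs nontrivially with the collection. Once that is pinned down, the argument for $j\neq i$ is formal from orthogonality plus Serre duality, and nothing depends on the specific value of $m$ beyond the existence of the Serre functor $\mathsf{S}$.
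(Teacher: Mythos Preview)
Your proof is correct and follows essentially the same approach as the paper: both argue by applying $\RHom(L_j,-)$ to the triangle $S_i\to L_i\to\mathsf{S}(L_i)$, handle $j\neq i$ via orthogonality and Serre duality, and for $j=i$ observe that the map $\RHom(L_i,L_i)\to\RHom(L_i,\mathsf{S}(L_i))$ induced by the defining nonzero morphism is an isomorphism. Your write-up is in fact more detailed than the paper's, which simply asserts this last isomorphism without spelling out the evaluation on $\id_{L_i}$.
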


\begin{proof}
We must show that $\RHom(L_j,S_i)=0$ for $j=1,\dots,N$.  If $j\neq i$, then $\RHom(L_j,L_i)\cong\RHom(L_j,\mathsf{S}(L_i))=0$, by assumption (b) in Setup \ref{setup} and Serre duality.  Thus $\RHom(L_j,S_i)\cong0$ for $j\ne i$.   On the other hand, the morphism $L_i\to\mathsf{S}(L_i)$ in \eqref{eq:mutation1} yields an isomorphism $\RHom(L_i,L_i)\cong\RHom(L_i,\mathsf{S}(L_i))$. Thus $\RHom(L_i,S_i)\cong 0$ as well.
\end{proof}

It then follows that in-fact we have 
\begin{equation}\label{eqn:difdef}
S_i=\kappa^*(\mathsf{S}(L_i))[-1],
\end{equation}
for $i=1,\dots,N$.

The following result summarizes the important properties of the $S_i$.

\begin{Lem}\label{lem:Twy}
In Setup \ref{setup}, the object $S_i$ is $(2m-1)$-spherical in $\cK$, for all $i=1,\dots,N$. Moreover, $\RHom(S_i,S_j)\cong 0$ and thus $S_i\not\cong S_j[k]$ for all $i\neq j\in\{1,\dots,N\}$ and for any $k\in\Z$. 
\end{Lem}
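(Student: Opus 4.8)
The claim has two parts: first that each $S_i$ is $(2m-1)$-spherical in $\cK$, and second that $\RHom(S_i,S_j)\cong 0$ for $i\neq j$. For the first part, the key input is the identification \eqref{eqn:difdef}, namely $S_i=\kappa^*(\mathsf{S}(L_i))[-1]$, together with the fact that the left adjoint $\kappa^*$ of the inclusion $\kappa\colon\cK\to\cA$ lets us compute $\Hom$-spaces in $\cK$ via $\Hom_{\cK}(\kappa^*(A),B)\cong\Hom_{\cA}(A,\kappa(B))$. The plan is to compute $\RHom_{\cK}(S_i,S_i)$ by first applying the triangle \eqref{eq:mutation1} in the first variable: since $\RHom_{\cA}(\mathsf{S}(L_i),S_i)$ can be evaluated by Serre duality as $\RHom_{\cA}(S_i,L_i)^\vee[-2m]$, and $\RHom_{\cA}(L_i,S_i)$ is already known to vanish (it was shown in the proof of Lemma \ref{lem:inK} that $\RHom(L_i,S_i)\cong 0$), the triangle reduces the computation of $\RHom(S_i,S_i)$ to understanding $\RHom(S_i,L_i)$. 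Dualizing \eqref{eq:mutation1} and using Serre duality once more should close the loop and produce exactly $\K$ in degrees $0$ and $2m-1$.

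**Carrying it out.** Concretely, I would proceed as follows. Apply $\RHom(-,S_i)$ to \eqref{eq:mutation1}: the term $\RHom(\mathsf{S}(L_i),S_i)$ is, by Serre duality in $\cA$, isomorphic to $\RHom(S_i,\mathsf{S}^{-1}\mathsf{S}(L_i))^\vee=\RHom(S_i,L_i)^\vee$ — wait, more carefully, using $\mathsf{S}^2\cong[2m]$ one gets $\mathsf{S}^{-1}\cong\mathsf{S}[-2m]$, so $\RHom(\mathsf{S}(L_i),S_i)\cong\RHom(S_i,\mathsf{S}(L_i))^\vee[-2m]$. Meanwhile $\RHom(L_i,S_i)=0$. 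Hence from the triangle $\RHom(\mathsf{S}(L_i),S_i)\to\RHom(L_i,S_i)\to\RHom(S_i,S_i)$ we obtain $\RHom(S_i,S_i)\cong\RHom(\mathsf{S}(L_i),S_i)[1]\cong\RHom(S_i,\mathsf{S}(L_i))^\vee[1-2m]$. Next, apply $\RHom(S_i,-)$ to \eqref{eq:mutation1}: since $\RHom(S_i,L_i)$ appears, and applying $\RHom(S_i,-)$ to \eqref{eq:mutation1} gives a triangle $\RHom(S_i,S_i)\to\RHom(S_i,L_i)\to\RHom(S_i,\mathsf{S}(L_i))$, one bootstraps. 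The cleanest route is probably: show directly $\RHom(S_i,\mathsf{S}(L_i))\cong\K$ (concentrated in degree $0$) by applying $\RHom(-,\mathsf{S}(L_i))$ to \eqref{eq:mutation1} and using $\RHom(\mathsf{S}(L_i),\mathsf{S}(L_i))\cong\K$ and $\RHom(L_i,\mathsf{S}(L_i))\cong\K$ in degree $0$, noting the connecting map is the identity. Then $\RHom(S_i,S_i)\cong\K^\vee[1-2m]\oplus(\text{degree }0\text{ part})$; tracking the degree-$0$ contribution separately gives $\Hom(S_i,S_i)\cong\K$, yielding exactly the spherical pattern. Finally the condition $\mathsf{S}_\cK(S_i)\cong S_i[2m-1]$ should follow because the Serre functor of $\cK$ is $\mathsf{S}_\cK\cong\kappa^!\circ\mathsf{S}_\cA\circ\kappa$ (or its left-adjoint variant), combined with \eqref{eqn:difdef} and $\mathsf{S}_\cA^2\cong[2m]$; alternatively one invokes that the $\Ext$-algebra computation plus the general fact that an object with sphere-like $\Ext$-algebra in a category with Serre functor whose square is a shift is automatically spherical up to the Serre-twist relation, which one verifies by the same adjunction juggling.

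**Orthogonality of distinct $S_i$.** For $\RHom(S_i,S_j)\cong 0$ when $i\neq j$: apply $\RHom(-,S_j)$ to \eqref{eq:mutation1}. The term $\RHom(L_i,S_j)$ vanishes — this is exactly what was shown in Lemma \ref{lem:inK}, since $S_j\in\cK=\langle L_1,\dots,L_N\rangle^\perp$. The term $\RHom(\mathsf{S}(L_i),S_j)$ vanishes by Serre duality: $\RHom(\mathsf{S}(L_i),S_j)\cong\RHom(S_j,\mathsf{S}(L_i))^\vee[-2m]$, and $\RHom(S_j,\mathsf{S}(L_i))\cong\RHom(S_j,L_i)^{\vee\vee}$-type reasoning, or more directly: apply $\RHom(S_j,-)$ to \eqref{eq:mutation1} for the index $i$; since $S_j\in\cK$ and $\cK=\langle L_i\rangle^\perp$ gives $\RHom(S_j,L_i)=0$, hence $\RHom(S_j,L_i)=0$, and similarly one needs $\RHom(S_j,\mathsf{S}(L_i))=0$, which by Serre duality is $\RHom(L_i,S_j)^\vee[-2m]$ — and $\RHom(L_i,S_j)=0$ was already established. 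So both outer terms of the triangle vanish and $\RHom(S_i,S_j)\cong 0$. The consequence $S_i\not\cong S_j[k]$ is then immediate: if $S_i\cong S_j[k]$ then $\RHom(S_i,S_j)$ would contain $\Hom(S_j[k],S_j)\cong\K$ in degree $-k$ (as $S_j$ is nonzero, being part of a nonzero triangle), contradicting vanishing.

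**Main obstacle.** The routine adjunction computations are straightforward; the one point demanding care is keeping the Serre-duality shifts and the $\mathsf{S}^2\cong[2m]$ substitution consistent, and in particular pinning down that the relevant connecting morphism (the one induced by $L_i\to\mathsf{S}(L_i)$) is an isomorphism on the degree-$0$ $\Hom$ — this is where the definition of $S_i$ as the cocone of the \emph{unique up to scaling} nonzero morphism is essential. Verifying the Serre-twist condition $\mathsf{S}_\cK(S_i)\cong S_i[2m-1]$ intrinsically in $\cK$ (rather than just the $\Ext$-algebra being sphere-like) is the only part that genuinely uses that $\cK$ is admissible in $\cA$ and that $\cA$ is an $m$-Enriques category, so that is the step I would write out most carefully.
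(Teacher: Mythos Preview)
Your strategy is essentially the paper's: apply $\RHom(-,S_j)$ to the triangle \eqref{eq:mutation1}, use $\RHom(L_i,S_j)=0$ to reduce to $\RHom(\mathsf{S}(L_i),S_j)$, then handle that term via Serre duality and another pass through the triangle; the Serre-functor condition is deduced from the standard formula relating $\mathsf{S}_\cK$ to $\mathsf{S}_\cA$ and projection. However, your execution has several errors that make the argument fail as written. First, your Serre-duality step is off by one application of $\mathsf{S}$: one has $\Hom(\mathsf{S}(L_i),S_i[p])\cong\Hom(S_i[p],\mathsf{S}^2(L_i))^\vee\cong\Hom(S_i,L_i[2m-p])^\vee$, so the relevant dual involves $\RHom(S_i,L_i)$, \emph{not} $\RHom(S_i,\mathsf{S}(L_i))$. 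Second, your claim $\RHom(S_i,\mathsf{S}(L_i))\cong\K$ is self-contradictory: you correctly observe that the map $\RHom(\mathsf{S}(L_i),\mathsf{S}(L_i))\to\RHom(L_i,\mathsf{S}(L_i))$ is an isomorphism, but that forces the cone $\RHom(S_i,\mathsf{S}(L_i))$ to be $0$, not $\K$. Third, in the orthogonality part you assert $\RHom(S_j,L_i)=0$ ``since $S_j\in\cK=\langle L_i\rangle^\perp$''; this is a direction error, as membership in $\cL^\perp$ kills $\RHom(\cL,-)$, not $\RHom(-,\cL)$. The vanishing $\RHom(S_j,L_i)=0$ for $i\neq j$ is true but requires applying $\RHom(-,L_i)$ to the triangle for $S_j$ and using $\RHom(L_j,L_i)=0=\RHom(\mathsf{S}(L_j),L_i)$.

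Once these bookkeeping issues are fixed your computation coincides with the paper's, which is organized more cleanly: the paper first computes $\RHom(\mathsf{S}(L_i),L_j)\cong\K^{\delta_{ij}}[-2m]$ and $\RHom(\mathsf{S}(L_i),\mathsf{S}(L_j))\cong\K^{\delta_{ij}}$ directly, then reads off $\RHom(\mathsf{S}(L_i),S_j)\cong\K^{\delta_{ij}}[-1]\oplus\K^{\delta_{ij}}[-2m]$ and finally $\RHom(S_i,S_j)\cong\K^{\delta_{ij}}\oplus\K^{\delta_{ij}}[1-2m]$ from two applications of \eqref{eq:mutation1}, handling $i=j$ and $i\neq j$ uniformly with no circularity. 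For $\mathsf{S}_\cK(S_i)\cong S_i[2m-1]$ the paper invokes $\mathsf{S}_\cK^{-1}\cong\mathsf{L}_\cL\circ\mathsf{S}_\cA^{-1}$ from \cite[Lemma~2.6]{KuzCYcat} (equivalent to your $\mathsf{S}_\cK\cong\kappa^!\mathsf{S}_\cA\kappa$) and carries out the mutation explicitly; your sketch stops short of this, so that step still needs to be written out.
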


\begin{proof}
We begin by proving the first condition for being $(2m-1)$-spherical as well as the second statement of the lemma.  

Since $\mathsf{S}$ is an autoequivalence, we get isomorphisms of graded vector spaces
\[
\RHom(\mathsf{S}(L_i),\mathsf{S}(L_j))\cong\RHom(L_i,L_j)\cong\K^{\delta_{ij}},
\]
for $i,j=1,\dots,N$. We also get the isomorphisms of graded vector spaces
\begin{align}\label{eqn:orto2}
\RHom(\mathsf{S}(L_i),L_j)&\cong\RHom(\mathsf{S}^2(L_i),\mathsf{S}(L_j))\\
\nonumber
&\cong\RHom(L_i[2m],\mathsf{S}(L_j))\\\nonumber
&\cong\RHom(L_j,L_i[2m])^\vee\\\nonumber
&\cong\K^{\delta_{ij}}[-2m],
\end{align}
where the first, second, and third isomorphisms follow from $\mathsf{S}$ being an autoequivalence, the fact that $\mathsf{S}^2\cong[2m]$, and Serre duality, respectively.  These two remarks show that if we apply $\RHom(\mathsf{S}(L_i),-)$ to \eqref{eq:mutation1}, we get an isomorphism of graded vector spaces
\begin{equation}\label{eqn:orto}
\RHom(\mathsf{S}(L_i),S_j)\cong\K^{\delta_{ij}}[-1]\oplus\K^{\delta_{ij}}[-2m].
\end{equation}
Hence, if we apply $\RHom(-,S_j)$ to \eqref{eq:mutation1} and we use that $S_j\in\cK\subseteq L_i^\perp$ by Lemma \ref{lem:inK}, we get an isomorphism of graded vector spaces
\[
\RHom(S_i,S_j)\cong\K^{\delta_{ij}}\oplus\K^{\delta_{ij}}[1-2m],
\]
as required.

It remains to show that $\mathsf{S}_\KK(S_i)\cong S_i[2m-1]$, for $i=1,\dots,N$. This easily follows from the following chain of isomorphisms
\begin{align*}
\mathsf{S}_{\cK}^{-1}(S_i)&\cong\mathsf{L}_{\langle L_1,\dots,L_{N}\rangle}(\mathsf{S}^{-1}(S_i))\\ &\cong\mathsf{L}_{L_i}(\mathsf{S}^{-1}(S_i)) \\
&\cong\mathsf{L}_{L_i}(\mathrm{Cone}(\mathsf{S}^{-1}(L_i)[-1]\to L_i[-1])) \\
&\cong\mathsf{L}_{L_i}(\mathsf{S}^{-1}(L_i))\\
&\cong S_i[-2m+1].
\end{align*}
Observe that the first isomorphism follows from \cite[Lemma 2.6]{KuzCYcat}, while for the second one we used \eqref{eqn:LeftMutationExceptional}, the fact that $\mathsf{S}$ is an equivalence, and \eqref{eqn:orto}. The third one and the last one use \eqref{eq:mutation1} (to which we apply $\mathsf{S}^{-1}$) and \eqref{eqn:orto2}. The fourth isomorphism follows from $\mathsf{L}_{L_i}(L_i)\cong 0$.
\end{proof}

\begin{Rem}\label{rmk:isomun}
(i) If we combine Serre duality and Lemma \ref{lem:inK}, then we get that $$\Hom(S_i,\mathsf{S}(L_i)[p-1])\cong\Hom(L_i,S_i[1-p])^\vee\cong0,$$ for all $p$. So by applying $\RHom(S_i,-)$ to \eqref{eq:mutation1}, we see that there is a unique (up to scalars) morphism $f_i\colon S_i\to L_i$.  For the same reason, the composition with $f_i$ provides an isomorphism $\Hom(E,S_i) \cong \Hom(E,L_i)$,  for all $E\in\cK$.

(ii) Let us reconsider the embedding $\kappa\colon\cK\to\cA$ realizing $\cK$ as an admissible subcategory. An easy exercise with Serre duality shows that its right adjoint $\kappa^!$ is described by the formula
\[
\kappa^!=\mathsf{S}_\cK\circ\kappa^*\circ\mathsf{S}_\cA^{-1},
\]
where $\mathsf{S}_\cK$ and $\mathsf{S}_\cA$ are the Serre functors of $\cK$ and $\cA$, respectively. Thus, as an application of \eqref{eqn:difdef} and Lemma \ref{lem:Twy}, we get the isomorphisms
\[
\kappa^!(L_i)\cong \mathsf{S}_\cK\circ\kappa^*\circ\mathsf{S}_\cA^{-1}(L_i)\cong\mathsf{S}_\cK\circ\kappa^*\circ\mathsf{S}_\cA(L_i)[-2m]\cong\mathsf{S}_\cK(S_i)[-2m+1]\cong S_i,
\]
for $i=1,\dots,N$. For the second isomorphism we used that $\mathsf{S}_\cA^2\cong[2m]$.
\end{Rem}

The lemma shows how to construct $(2m-1)$-spherical objects in $\cK$. We want to prove now that the list is complete.

\begin{Prop}\label{prop:10Ts}
Let $\AA$ be an $m$-Enriques category with a semiorthogonal decomposition $$\AA=\langle\cK,L_1,\dots,L_N\rangle,$$ where $\KK$ is admissible, $N\geq 1$ is an integer, and $\cL:=\{L_1,\dots,L_N\}$ is an orthogonal exceptional collection. Let $F$ be a $(2m-1)$-spherical object in $\cK$. If $m\geq\frac{3}{2}$, then $F\cong S_i[k]$ for some $i=1,\dots, N$ and some integer $k$.
\end{Prop}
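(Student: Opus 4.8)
The plan is to run a classification argument by analyzing how the $(2m-1)$-spherical object $F \in \cK$ sits inside the ambient category $\cA$. First I would push $F$ forward: consider $\kappa(F) \in \cA$ and use the semiorthogonal decomposition $\cA = \langle \cK, L_1, \dots, L_N\rangle$ together with the exceptional collection's orthogonality to understand the graded vector spaces $\RHom_\cA(\kappa(F), L_i)$ and $\RHom_\cA(L_i, \kappa(F))$. The key constraint is that $F$ is $(2m-1)$-spherical in $\cK$, so $\mathsf{S}_\cK(F) \cong F[2m-1]$; combining this with the formula $\kappa^! = \mathsf{S}_\cK \circ \kappa^* \circ \mathsf{S}_\cA^{-1}$ from Remark \ref{rmk:isomun}(ii) and $\kappa^!(L_i) \cong S_i$ established there, adjunction gives
\[
\RHom_\cA(L_i, \kappa(F)) \cong \RHom_\cK(\kappa^!(L_i)[\text{shift}], F)
\]
up to a shift controlled by $2m$ and $2m-1$, so I can read off $\RHom_\cK(S_i, F)$ from $\RHom_\cA(L_i,\kappa(F))$. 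The first substantive step is to show that $\RHom_\cK(S_i, F)$ cannot vanish for all $i$: if it did, then $\kappa(F)$ would be right-orthogonal to each $L_i$ (after accounting for the shift and Serre duality), forcing $\kappa(F) \in \cK \cap \langle L_1,\dots,L_N\rangle^{\text{something}}$, and a dimension/Euler-characteristic count — using that $\cA$ is an $m$-Enriques category so $\mathsf{S}_\cA^2 = [2m]$ — would contradict $F$ being spherical rather than, say, exceptional. This is where the hypothesis $m \geq \tfrac32$ must enter: it guarantees $2m-1 \geq 2$, so a spherical object genuinely has cohomology in a degree $\neq 0$, which is what breaks the tie with exceptional objects and keeps the numerology rigid.

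The second step is the heart of the matter: fix $i$ with $\RHom_\cK(S_i, F) \neq 0$ and produce a nonzero morphism $F \to S_i[k]$ (or $S_i[k] \to F$) for a suitable $k$, then argue it is an isomorphism. Since both $F$ and $S_i$ are $(2m-1)$-spherical in $\cK$ and $\cK$ has Serre functor $\mathsf{S}_\cK$ with $\mathsf{S}_\cK(S_i) \cong S_i[2m-1]$ and $\mathsf{S}_\cK(F) \cong F[2m-1]$, the graded vector space $\RHom_\cK(F, S_i)$ is self-dual up to the shift $[2m-1]$ by Serre duality. I would argue that $\RHom_\cK(F,S_i)$, being nonzero and self-dual with the sphere's symmetry, must be exactly $\K \oplus \K[-(2m-1)]$ concentrated so that there is a nonzero map $g: F \to S_i[k]$ in a single degree $k$; the cone $C$ of $g$ then has $\RHom_\cK(F, C)$ and $\RHom_\cK(C, S_i[k])$ computable from the long exact sequences, and the spherical structure forces $\RHom_\cK(F,F) \cong \RHom_\cK(S_i,S_i) \cong \K \oplus \K[-(2m-1)]$, so the cone must be zero. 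This is a standard "spherical objects with a nonzero hom are isomorphic (up to shift)" maneuver, but it requires $\RHom_\cK(F,S_i)$ to be genuinely one-dimensional in its lowest degree, which again leans on $2m-1 \geq 2$ so that the degree-$0$ and top-degree pieces don't collide.

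I expect the main obstacle to be the first step — ruling out the case where $F$ is "invisible" to all the $S_i$, i.e.\ $\RHom_\cK(S_i, F) = 0$ for every $i$. Geometrically this would say $\kappa(F)$, suitably twisted by $\mathsf{S}_\cA$, lies in the subcategory generated by the $L_i$ together with $\cK \cap {}^\perp\!\langle L_1,\dots,L_N\rangle$; one must show no nonzero such $F$ can be $(2m-1)$-spherical. The cleanest route is probably to observe that $\langle L_1,\dots,L_N\rangle^\perp$ inside $\cA$ equals $\cK$ (by the defining semiorthogonal decomposition), so if $\kappa(F)$ were also left-orthogonal to all $L_i$ up to the shift imposed by $\mathsf{S}_\cA$, then $F$ would be fixed (up to shift) by $\mathsf{S}_\cA$ restricted to $\cK$; chasing the triangle \eqref{eq:mutation1} and the relation $\mathsf{S}_\cA^2 \cong [2m]$ produces a contradiction with $F \not\cong 0$ and $F$ having the sphere's $\Ext$-algebra in degrees $0$ and $2m-1$ only. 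Making this precise — keeping careful track of all the shifts coming from $2m$ versus $2m-1$ — is the delicate part, but it is bookkeeping rather than a conceptual gap, and the hypothesis $m \geq \tfrac32$ is exactly what makes all the inequalities strict enough to close.
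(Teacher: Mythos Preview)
Your proposal has a genuine gap in the second step, and the first step is too vague to stand as written.

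\textbf{The second step does not go through.} You claim that once you have a nonzero morphism $g\colon F\to S_i[k]$ between two $(2m-1)$-spherical objects, a ``standard maneuver'' shows the cone is zero. This is false in general: two non-isomorphic spherical objects can admit nonzero morphisms (think of line bundles on a Calabi\textendash Yau threefold, which are $3$-spherical). Concretely, the cone argument requires that the map
\[
\Hom(F,F[2m-1])\longrightarrow \Hom(F,S_i[k+2m-1]),\qquad \phi\longmapsto g[2m-1]\circ\phi,
\]
be an isomorphism, and nothing in your outline forces this composition to be nonzero. The paper spends the bulk of its proof on exactly this kind of issue: it does \emph{not} compare $F$ to $S_i$ directly, but instead uses the mutation triangle
\[
L[-1]\oplus L[2m-2]\to \mathsf{S}_\cA(F)[-1]\to F
\]
(valid once one knows $\RHom(F,L_i)$ is concentrated in the right degrees for a single $i$), builds an auxiliary cone $C$ of $L[2m-2]\to\mathsf{S}_\cA(F)[-1]$, proves $C$ is exceptional, and then carefully chases five explicit morphisms $k_1,\dots,k_4,f,g$ to show that a specific composition is nonzero, forcing $C\cong\mathsf{S}_\cA(L)[-1]$ and hence $F\cong S_1$. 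That composition argument is the heart of the proof and has no shortcut of the kind you propose.

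\textbf{The first step is also not as you describe.} You want an Euler-characteristic or orthogonality contradiction to rule out $\RHom(S_i,F)=0$ for all $i$. The paper instead applies $\RHom(F,-)$ to the mutation triangle for $F$ and reads off directly that the graded vector space $\bigoplus_i \RHom(F,L_i)\otimes\RHom(F,L_i)^\vee$ is four-dimensional, with degree-zero part of dimension two. This simultaneously gives nonvanishing, pins down that exactly \emph{one} index $i$ contributes, and determines $\RHom(F,L_i)\cong\K\oplus\K[1-2m]$ for that $i$. Your outline has no mechanism for the uniqueness of $i$ or for the precise two-dimensionality of $\RHom(F,S_i)$; self-duality under Serre gives symmetry of the graded pieces but no dimension bound.
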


\begin{proof}
By \cite[Lemma 2.6]{KuzCYcat} and since $\mathsf{S}^2(F)\cong F[2m]$ (as in Definition \ref{def:EnriquesCat}), the isomorphism $\mathsf{S}_{\cK}(F)\cong F[2m-1]$ is equivalent to the existence of an isomorphism $F\cong\mathsf{L}_{\langle L_1,\dots,L_{N}\rangle}(\mathsf{S}(F))[-1]$. Define the object $P:=\bigoplus_{1\leq i\leq N} L_i$. It is \emph{polyexceptional} in the sense that it is a direct sum of objects forming an orthogonal exceptional collection and so its endomorphism algebra is such that $$\RHom(P,P)\cong\K^N,$$. It follows from \eqref{eqn:LeftMutationExceptional} that $F$ sits in a distinguished triangle
\begin{equation}\label{eq:lmut}
F\to P\otimes_{\K^N} \RHom(P,\mathsf{S}(F))\to \mathsf{S}(F).
\end{equation}

If we apply $\RHom(F,-)$ to \eqref{eq:lmut}, then, by Serre duality and taking into account that $F$ is $(2m-1)$-spherical, we get a distinguished triangle of graded $\K$-vector spaces
\begin{equation}\label{eq:trialm}
\K\oplus\K[-2m+1]\to\RHom(F,P)\otimes_{\K^N}\RHom(F,P)^\vee\to\K\oplus\K[2m-1].
\end{equation}
Hence, since $2m\geq 3$, the graded vector space $V:=\RHom(F,P)\otimes_{\K^N}\RHom(F,P)^\vee$ is $4$-dimensional.  Moreover, since $\RHom(P,P)$ consists only of componentwise morphisms, $V$ decomposes as
\[
V\cong\bigoplus_{i=1}^N V_i\otimes V_i^\vee,
\]
where $V_i:=\RHom(F,L_i)$. For dimension reasons, we can only have two possibilities: either there are four non-trivial $1$-dimensional $V_i$ or there is only one non-trivial $2$-dimensional $V_i$. But in the former case, as $V_i\otimes V_i^\vee$ is concentrated in degree $0$ for each $i$, we get a contradiction since $V$ has 2-dimensional degree $0$ piece by \eqref{eq:trialm}. Thus there must be only one non-trivial $V_i$ which then must satisfy
\[
V_i\cong\K[k-2m+1]\oplus\K[k],
\]
for some $k\in\Z$. Thus, up to reordering the objects in $\cL$ and shifting, we can assume $i=1$ and $k=0$. To simplify the notation, for the rest of the proof, we set $L:=L_1$. Hence
\begin{equation}\label{eqn:computations}
\Hom(F,L[p])\cong\begin{cases}
\K & \text{if $p=0,2m-1$,}\\
0 & \text{otherwise.}
\end{cases}
\end{equation}
Then \eqref{eq:lmut} gets the following simplified form, after a rotation:
\begin{equation}\label{eq:fkf}
L[-1]\oplus L[2m-2]\rightarrow \mathsf{S}(F)[-1]\rightarrow F.
\end{equation}
We will show that $F\cong S_1$, as required.

To that end, let $C$ be the cone of the unique (up to scalars) morphism $L[2m-2]\to \mathsf{S}(F)[-1]$ so that we get the following distinguished triangle
\begin{equation}
L[2m-2]\to \mathsf{S}(F)[-1]\to C.
\label{eq:conec}
\end{equation}
Note that it can put in the following commutative diagram of distinguished triangles
\begin{center}
	\begin{tikzcd}
		L[2m-2] \arrow{d} \arrow{r}
		& \mathsf{S}(F)[-1] \arrow{r} \arrow{d}{\id} & C\arrow{d} \\
		L[-1]\oplus L[2m-2] \arrow{r} \arrow{d}& \mathsf{S}(F)[-1]
		\arrow{d} \arrow{r} & F \arrow{d}\\
		L[-1] \arrow{r} & 0\arrow{r} & L
	\end{tikzcd}
\end{center}
in view of the octahedron axiom. Thus $C$, $F$ and $L$ sit in a distinguished triangle
\begin{equation}\label{eq:trafin}
C\stackrel{k_1}{\longrightarrow} F\longrightarrow L.
\end{equation}

By applying $\RHom(L,-)$ to \eqref{eq:trafin} (resp. $\RHom(-,L)$ to \eqref{eq:conec}) and using the fact that $F\in\cK$ and Serre duality, we see that
\begin{equation}\label{eqn:cases}
\Hom(L,C[j])\cong\begin{cases}
\K & \text{if $j=1$} \\
0 & \text{otherwise}
\end{cases}\qquad\text{and}\qquad\Hom(C,L[j])\cong\begin{cases}
\K & \text{if $j=2m-1$} \\
0 & \text{otherwise.}
\end{cases}
\end{equation}
This and \eqref{eqn:computations} show that the morphism $F\to L$ in \eqref{eq:trafin} is not trivial. Otherwise $C\cong F\oplus L[-1]$, contradicting the second part of \eqref{eqn:cases}.

We claim that to show $F\cong S_1$, it suffices to show that $C\cong \mathsf{S}(L)[-1] $. Indeed, observe first that $F\not\cong\mathsf{S}(L)[-1]\oplus L$ because $\Hom(F,F)\cong\K$ as $F$ is a $(2m-1)$-spherical object. Thus, in view of \eqref{eq:trafin}, $C\cong\mathsf{S}(L)[-1]$ would imply that $F$ is isomorphic to the cone of the (unique up to scalars) non-trivial map $L[-1]\to \mathsf{S}(L)[-1]$, which is $S_1$ by \eqref{eq:mutation1}.

To show $C\cong \mathsf{S}(L)[-1] $, we first prove that it is exceptional and then produce non-trivial morphisms $f\colon \mathsf{S}(L)[-1] \to C$ and $g\colon C\to \mathsf{S}(L)[-1] $ whose composition $g\circ f$ is non-zero. As $\mathsf{S}(L)[-1]$ is exceptional, up to scaling $f$ or $g$, we may assume that $g\circ f=\id$.  But then $\mathsf{S}(L)[-1]$ must be a direct summand of $C$, contradicting the fact that $C$ is exceptional. Hence $C\cong\mathsf{S}(L)[-1]$.  

By Serre duality and \eqref{eqn:cases} we conclude that 
\begin{align*}&\Hom(\mathsf{S}(L)[-1],C)^\vee\cong\Hom(C,\mathsf{S}^2(L)[-1])\cong\Hom(C,L[2m-1])\cong\K,\\
&\Hom(C,\mathsf{S}(L)[-1])^\vee\cong\Hom(L,C[1])\cong\K.\end{align*}
Thus we let $f\in\Hom(\mathsf{S}(L)[-1],C)$ and $g\in\Hom(C,\mathsf{S}(L)[-1])$ be the unique (up to scalars) non-zero morphisms.

Before we prove that $g\circ f\ne 0$, we prove that $C$ is exceptional, recording a useful result along the way.  We claim indeed that $$\Hom(C,\mathsf{S}(F)[j])=0,\;\text{for all}\; j\in\Z,\qquad\text{and}\qquad\Hom(C,C[j])\cong\begin{cases}
\K & \text{if $j=0$} \\
0 & \text{otherwise.}
\end{cases}$$ To that end, applying $\RHom(F,-)$ to \eqref{eq:trafin}, we get $\Hom(F,C[j])=0$ for every $j\leq 2m-2$. By Serre duality, we get $\Hom(C,\mathsf{S}(F)[j])=0$ for every $j\geq 2-2m\geq -1$ (note that $m\geq \frac{3}2$).
At this point, we apply $\RHom(-,\mathsf{S}(F))$ to \eqref{eq:conec} and get $\Hom(C,\mathsf{S}(F)[j])=0$ for every $j\leq -2$. Therefore,  we conclude that $\Hom(C,\mathsf{S}(F) [j])=0$, for every $j\in \Z$. Finally, if we apply $\RHom(C,-)$ to \eqref{eq:conec}, we get $C$ is exceptional.

It remains to show that $g\circ f$ is non-zero. Let $k_2\in \Hom(\mathsf{S}(F)[-1],\mathsf{S}(L)[-1])\cong\K$ be the unique (up to scalars) non-trivial morphism, and similarly, let $k_3$ be the unique (up to scalars) non-trivial morphism in $\Hom(\mathsf{S}(L)[-1],F)\cong\K$.  We claim that $k_3\circ k_2\ne 0$.  Indeed,  applying $\RHom(\mathsf{S}(-),F)$ to \eqref{eq:fkf}, we get an exact sequence 
\begin{gather*}
\Hom(\mathsf{S}^2(F)[-1],F)\to\Hom(\mathsf{S}(L)[-1],F)\oplus\Hom(\mathsf{S}(L)[2m-2],F)\to\\
\to\Hom(\mathsf{S}(F)[-1],F)\to\Hom(\mathsf{S}^2(F)[-2],F).
\end{gather*}
Now for $i=0,1$, $\Hom(\mathsf{S}^2(F)[-(i+1)],F)\cong\Hom(F[2m-(i+1)],F)\cong 0$ because $F$ is $(2m-1)$-spherical and $\mathsf{S}^2\cong[2m]$. Moreover, as $m\geq\frac{3}{2}$, it follows from \eqref{eqn:computations} that
\[
\Hom(\mathsf{S}(L)[2m-2],F)\cong\Hom(F,\mathsf{S}^2(L)[2m-2])^\vee\cong\Hom(F,L[4m-2])^\vee\cong 0.
\]
Thus precomposition with $k_2$ induces an isomorphism
\begin{equation}\label{eqn:iso112}
\Hom(\mathsf{S}(L)[-1],F)\xrightarrow{\sim}\Hom(\mathsf{S}(F)[-1],F).
\end{equation}
In particular, $k_3\ne 0$ has non-trivial image, i.e. $k_3\circ k_2\ne 0$.

Applying $\RHom(-,C)$ to \eqref{eq:conec} and using \eqref{eqn:cases}, we see that there is a unique (up to scalars) non-trivial morphism $k_4\in\Hom(\mathsf{S}(F)[-1],C)\cong\K$.  The same argument as in the previous paragraph, by applying $\RHom(-,\mathsf{S}(L)[-1])$ to \eqref{eq:conec}, yields that precomposition with $k_4$ induces an isomorphism  \begin{equation}\label{eqn:iso113}
\Hom(C,\mathsf{S}(L)[-1])\xrightarrow{\sim}\Hom(\mathsf{S}(F)[-1],\mathsf{S}(L)[-1]).
\end{equation}

Now we claim that the composition of non-trivial morphisms
\[
\mathsf{S}(F)[-1]\stackrel{k_4}{\longrightarrow} C\stackrel{g}{\longrightarrow} \mathsf{S}(L)[-1]\stackrel{k_3}{\longrightarrow} F
\]
is a non-trivial morphism. Indeed, by \eqref{eqn:iso113} $g\circ k_4$ is non-zero, so there exists $0\neq\mu\in\K$ such that $0\neq g\circ k_4=\mu k_2$. But then
\[
k_3\circ g\circ k_4=\mu k_3\circ k_2\neq 0.
\]
In particular, it follows that the composition $k_3\circ g$ is non-zero.  

Applying $\RHom(C,-)$ to \eqref{eq:trafin}, we see that $\Hom(C,F)\cong\K$ from a simple computation using \eqref{eqn:cases} and the fact that $\Hom(C,C)\cong \K$. Thus there exists $0\neq\lambda\in\K$ such that
\[
 k_3\circ g=\lambda k_1.
\]

Finally, applying $\RHom(\mathsf{S}(L)[-1],-)$ to \eqref{eq:trafin}, we get the isomorphism
\begin{equation}\label{eqn:isoimport}
\Hom(\mathsf{S}(L)[-1],C)\xrightarrow{\sim}\Hom(\mathsf{S}(L)[-1],F),
\end{equation}
defined by the postcomposition by the non-trivial map $k_1$. In particular, since $f\ne 0$, we have $k_1\circ f\ne 0$.  But then 
\[0\ne \lambda (k_1\circ f)=(\lambda k_1)\circ f=k_3\circ g\circ f,
\]
so $g\circ f\ne 0$, as required.
\end{proof}

\begin{Rem}\label{rmk:general2}
It would be interesting to study whether Proposition \ref{prop:10Ts} can be generalized to situations where one has semiorthogonal decompositions as in Proposition \ref{prop:exceptionalcollectionexists} when the surface does not have an ample Fano polarization. Unfortunately, the proof provided above does not apply in this setting.
\end{Rem}

\section{Proof of the main results}\label{sec:proof}

The proof of the main results in this paper is explained in Section \ref{subsec:mainthm1} and it follows from a more general result which will be proved in the next section.

\subsection{A general extension result}\label{sec:genextresult}

Let $Z_1$ and $Z_2$ be smooth projective varieties over $\K$ admitting admissible subcategories $\AA_{Z_i}$ which fall under Setup \ref{setup}.  That is, the $\AA_{Z_i}$ are $m$-Enriques and admit semiorthogonal decompositions $$\AA_{Z_i}=\langle\Ku(Z_i),L^i_1,\dots,L^i_{N_i}\rangle,$$ where $\Ku(Z_i)$ is admissible in $\AA_{Z_i}$, $N_i\geq 1$ is an integer, and $\cL_i:=\{L^i_1,\dots,L^i_{N_i}\}$ is an orthogonal exceptional collection.  We can then prove the following general result.

\begin{Thm}\label{thm:gen}
Under the assumptions above, let $\mathsf{F}\colon \Ku(Z_1)\to\Ku(Z_2)$ be an equivalence which is of Fourier\textendash Mukai type. Then $N_1=N_2=N$ and there exists a Fourier\textendash Mukai functor $\Phi_{\tilde{\cE}}\colon\Db(Z_1)\to\Db(Z_2)$ such that
\begin{itemize}
\item[{\rm (1)}] $\Phi_{\tilde{\cE}}|_{\Ku(Z_1)}\cong\mathsf{F}$;
\item[{\rm (2)}] $\Phi_{\tilde{\cE}}|_{\cA_{Z_1}}\colon\cA_{Z_1}\to\cA_{Z_2}$ is an equivalence; and
\item[{\rm (3)}] Up to reordering and shift, $\Phi_{\tilde{\cE}}(L_i^1)\cong L_i^2$, for all $i=1,\dots,N$. 
\end{itemize}
\end{Thm}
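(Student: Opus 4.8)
The plan is to extend $\mathsf F$ to a Fourier--Mukai functor on all of $\Db(Z_1)$ by adjoining the exceptional objects $L^i_1,\dots,L^i_{N_i}$ one at a time, alternately to source and target, using the extension machinery of Section~\ref{subsec:extending} (the Derived Torelli Theorem enters only afterwards, in the deduction of Theorems~\ref{thm:derived_torreli} and \ref{thm:IKConj}).

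\emph{Matching the spherical objects.} Since $\mathsf F$ is an equivalence it commutes with Serre functors and shifts, so by Lemma~\ref{lem:Twy} it sends each $(2m-1)$-spherical object $S^1_j$ to a $(2m-1)$-spherical object of $\Ku(Z_2)$ (both categories being $m$-Enriques for a common $m$, which I take $\ge\frac{3}{2}$, as holds in all our applications). By Proposition~\ref{prop:10Ts} and Lemma~\ref{lem:Twy} the $(2m-1)$-spherical objects of $\Ku(Z_i)$ are precisely the shifts of $S^i_1,\dots,S^i_{N_i}$, and distinct indices give objects that are non-isomorphic even up to shift. Thus $\mathsf F$ induces a bijection of these finite sets, forcing $N_1=N_2=:N$; reordering $\cL_2$ and replacing each $L^2_j$ by an appropriate shift --- which shifts $S^2_j=\mathrm{cocone}(L^2_j\to\mathsf S(L^2_j))$ correspondingly --- I may assume $\mathsf F(S^1_j)\cong S^2_j$ for all $j$. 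Finally, since $\mathsf F$ is of Fourier--Mukai type, Remark~\ref{rmk:FMfun0onorth} yields $\cE^{(0)}\in\Db(Z_1\times Z_2)$ with $\Phi_{\cE^{(0)}}|_{\Ku(Z_1)}\cong\mathsf F$ and $\Phi_{\cE^{(0)}}({}^\perp\Ku(Z_1))\cong 0$.

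\emph{The inductive extension.} For $k=1,\dots,N$ I would build $\cE^{(k)}\in\Db(Z_1\times Z_2)$ so that $\Phi_{\cE^{(k)}}$ restricts to an equivalence $\langle\Ku(Z_1),L^1_1,\dots,L^1_k\rangle\xrightarrow{\sim}\langle\Ku(Z_2),L^2_1,\dots,L^2_k\rangle$ taking $L^1_j\mapsto L^2_j$ and restricting to $\mathsf F$ on $\Ku(Z_1)$, and with $\Phi_{\cE^{(k)}}({}^\perp\langle\Ku(Z_1),L^1_1,\dots,L^1_k\rangle)\cong 0$. The step from $k-1$ to $k$ is an application of Proposition~\ref{prop:extension2} with $\cA:=\langle\Ku(Z_1),L^1_1,\dots,L^1_{k-1}\rangle$, $\BB:=\langle\Ku(Z_2),L^2_1,\dots,L^2_{k-1}\rangle$, $\Phi_\cE:=\Phi_{\cE^{(k-1)}}$, $E:=L^1_k\in{}^\perp\cA$ and $F:=L^2_k\in{}^\perp\BB$: assumption~(a) and the vanishing on ${}^\perp\cA$ are the inductive hypotheses, while assumption~(b) requires an isomorphism $\Phi_{\cE^{(k-1)}}(\alpha\alpha^!(L^1_k))\cong\beta\beta^!(L^2_k)$. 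To see this, note that left mutation through an exceptional object orthogonal to $L^1_k$ is the identity, so $\mathsf L_\cA(L^1_k)\cong\mathsf L_{\Ku(Z_1)}(L^1_k)$, which by \eqref{eq:mutation1} and Remark~\ref{rmk:isomun}(ii) is $\mathsf S(L^1_k)$; hence $\alpha\alpha^!(L^1_k)\cong S^1_k$, and symmetrically $\beta\beta^!(L^2_k)\cong S^2_k$. As $S^1_k\in\Ku(Z_1)$ by Lemma~\ref{lem:inK}, the desired isomorphism is exactly $\mathsf F(S^1_k)\cong S^2_k$, established above. Proposition~\ref{prop:extension2} then produces $\cE^{(k)}$, and parts~(1)--(3) of Proposition~\ref{prop:extension} ensure $\Phi_{\cE^{(k)}}$ still agrees with $\mathsf F$ on $\Ku(Z_1)$, still annihilates ${}^\perp\langle\Ku(Z_1),L^1_1,\dots,L^1_k\rangle$, and sends $L^1_k$ to $L^2_k$ (its values on $L^1_j$, $j<k$, being unchanged). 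After $N$ steps $\Phi_{\tilde{\cE}}:=\Phi_{\cE^{(N)}}$ is a Fourier--Mukai functor $\Db(Z_1)\to\Db(Z_2)$ satisfying (1), (2), (3).

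\emph{Main obstacle.} The real content lies upstream, in the classification of $(2m-1)$-spherical objects (Proposition~\ref{prop:10Ts}): it is precisely this that makes the matching $\mathsf F(S^1_j)\cong S^2_j$, hence assumption~(b) of Proposition~\ref{prop:extension2} at every stage, available. With that in hand the extension is a formal, bookkeeping-heavy induction built on Section~\ref{subsec:extending}, the only substantive computational input being the mutation identities $\alpha\alpha^!(L^i_k)\cong S^i_k$.
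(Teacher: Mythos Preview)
Your proposal is correct and follows essentially the same route as the paper: classify the $(2m-1)$-spherical objects via Proposition~\ref{prop:10Ts} to force $N_1=N_2$ and to match $\mathsf F(S^1_j)\cong S^2_j$ after reordering and shifting, normalize the kernel via Remark~\ref{rmk:FMfun0onorth}, then extend one exceptional object at a time with Proposition~\ref{prop:extension2}, using orthogonality of the $L^i_j$ to identify $\alpha\alpha^!(L^1_k)\cong S^1_k$ at each stage. You are also right to flag the implicit hypothesis $m\ge\tfrac{3}{2}$ needed for Proposition~\ref{prop:10Ts}, which the paper uses without restating.
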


\begin{proof}
We first observe that $N_1=N_2=N$. Indeed, by Lemma \ref{lem:Twy} and Proposition \ref{prop:10Ts} applied to $\AA_{Z_i}$, $\Ku(Z_i)$ has to contain, up to shift, exactly $N_i$ pairwise non-isomorphic $(2m-1)$-spherical objects. Since such a number is invariant under equivalence and $\Ku(Z_1)\cong\Ku(Z_2)$, we must have $N_1=N_2$.

 Now let $\cE\in\Db(Z_1\times Z_2)$ be such that $\mathsf{F}=\Phi_{\EE}|_{\Ku(Z_1)}$.  We would like to apply Proposition \ref{prop:extension2} with $$\AA=\AA_{Z_1}\qquad X=Z_1,\qquad Y=Z_2,\qquad \BB=\AA_{Z_2}.$$ By Remark \ref{rmk:FMfun0onorth}, we may choose $\cE$ such that $\Phi_\EE|_{\!^\perp\!\Ku(Z_1)}=0$ without changing $\Phi_\EE|_{\Ku(Z_1)}$.  Thus we can suppose without loss of generality that $\Phi_\EE$ is as in Proposition \ref{prop:extension2} and that assumption (a) of the proposition is verified. Consider, for any $j=1,2$ and any $i=1,\ldots,N$, the $(2m-1)$-spherical object $S_i^j\in\Ku(Z_j)$ defined by the distinguished triangle
\[
S_i^j\to L_i^j\to\mathsf{S}_{\AA_{Z_j}}(L_i^j).
\]
By Proposition \ref{prop:10Ts}, we can assume $\mathsf{F}(S^1_i)\cong S^2_i$, up to reordering and shift, for all $i=1,\dots,N$.

Denote by $$\zeta_i\colon\Ku(Z_i)\hookrightarrow\Db(Z_i)\qquad\text{and}\qquad\kappa_i\colon\Ku(Z_i)\into\AA_{Z_i}$$ the relevant embeddings for $i=1,2$.  Note that by Remark \ref{rmk:isomun} (ii), the object $\kappa_j\kappa_j^!(L^j_i)\cong S_i^j$.  An analogous computation shows that $\zeta_j\zeta_j^!(L^j_i)\cong S_i^j$, so we get an isomorphism 
$$\mathsf{F}(\zeta_1\zeta_1^!(L^1_i))\cong\mathsf{F}(S_i^1)\cong S_i^2\cong\zeta_2\zeta_2^!(L^2_i).$$
Thus assumption (b) of Proposition \ref{prop:extension2} is also satisfied if we set
\[
X=Z_1\qquad Y=Z_2\qquad\cA=\Ku(Z_1)\qquad\BB=\Ku(Z_2)\qquad E=L_1^1\qquad F=L_1^2.
\] 

Therefore, by Proposition \ref{prop:extension2}, we get a Fourier\textendash Mukai type functor $\Phi_{\tilde\EE_1}$ such that
\begin{gather*}
    \Phi_{\tilde\EE_1}|_{\langle\Ku(Z_1),L_1^1\rangle}\colon \langle\Ku(Z_1),L_1^1\rangle\isomor\langle\Ku(Z_2),L_1^2\rangle,\qquad
\Phi_{\tilde\EE_1}|_{\Ku(Z_1)}\cong\mathsf{F},\qquad\Phi_{\tilde\EE_1}(L_1^1)\cong L_1^2,\;\text{and}\\
\Phi_{\tilde\EE_1}(\!^\perp\!\langle\Ku(Z_1),L_1^1\rangle)\cong 0.
\end{gather*} 
The argument proceeds inductively. To simplify the notation slightly, let us set $L_i:=L_i^1$ and $M_i:=L_i^2$. For $k\geq 2$, we assume that we have a Fourier\textendash Mukai kernel $\tilde\EE_{k-1}$ such that $$\Phi_{\tilde\EE_{k-1}}|_{\langle\Ku(Z_1),L_1,\dots,L_{k-1}\rangle}\colon\langle\Ku(Z_1),L_1,\dots,L_{k-1}\rangle\to \langle\Ku(Z_2),M_1,\dots,M_{k-1}\rangle$$ is an equivalence satisfying
$$\Phi_{\tilde \EE_{k-1}}|_{\Ku(Z_1)} \cong \Phi_{\tilde\EE_{k-2}}|_{\Ku(Z_1)}\qquad\text{and}\qquad\Phi_{\tilde \EE_{k-1}}(L_{i})\cong M_{i},$$  for $i=1,\dots,k-1$, where we set $\tilde\EE_0:=\EE$. To proceed to stage $k$, we apply again Proposition \ref{prop:extension2} with
\[
\cA=\langle\Ku(Z_1),L_1,\dots,L_{k-1}\rangle,\qquad\cB=\langle\Ku(Z_2),M_1,\dots,M_{k-1}\rangle,\qquad E=L_k,\qquad F=M_k.
\]
Here it is important to note that, since the exceptional objects $L_1,\ldots,L_k$ (resp.\ $M_1,\ldots,M_k$) are orthogonal, by Remark \ref{rmk:isomun} (ii) the object $\alpha\alpha^!(L_k)\cong S_k^1$ (resp. $\beta\beta^!(M_k)\cong S_k^2$) is actually contained in $\Ku(Z_1)$ (resp. $\Ku(Z_2)$) and not just in $\cA$ (resp. $\BB$).  Here $\alpha\colon\cA\hookrightarrow\Db(Z_1)$ and $\beta\colon\BB\into\Db(Z_2)$ are the admissible embeddings. Hence $\Phi_{\tilde\EE_{k-1}}$ satisfies the assumptions of Proposition \ref{prop:extension2} by induction, and we can produce $\Phi_{\tilde\EE_{k}}$.
	Continuing in this way, by Proposition \ref{prop:extension2}, we get an equivalence $\Phi_{\tilde\EE_{N}}\colon\Db(\cA_{Z_1})\isomor\Db(\cA_{Z_2})$ and we just set $\tilde\EE:=\tilde\EE_N$.
\end{proof}

\subsection{Proof of Theorems \ref{thm:derived_torreli} and \ref{thm:IKConj}}\label{subsec:mainthm1}

As for Theorem \ref{thm:derived_torreli}, we can actually prove the following more general version of it.

\begin{Thm}\label{thm:derived_torreligen}
Let $X_1$ and $X_2$ be Enriques surfaces. Then the following are equivalent:
\begin{itemize}
\item[{\rm (i)}] There is a semiorthogonal decomposition
$\Db(X_i)=\langle \Ku(X_i,\cL_i),\cL_i\rangle,$
satisfying \eqref{cond}, for $i=1,2$ and an exact equivalence $$\mathsf{F}: \Ku(X_1,\cL_1)\xrightarrow{\sim}\Ku(X_2,\cL_2)$$ of Fourier\textendash Mukai type;
\item[{\rm(ii)}] $X_1\cong X_2$ and either $\Db(X_1)$ or $\Db(X_2)$ has a semiorthogonal decomposition satisfying condition \eqref{cond}.
\end{itemize}
\end{Thm}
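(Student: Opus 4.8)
The plan is to obtain the equivalence of (i) and (ii) by combining the general extension statement of Theorem \ref{thm:gen} with the classical Derived Torelli Theorem \ref{thm:dertordercat}. The implication (i)$\Rightarrow$(ii) is, essentially verbatim, the proof of Theorem \ref{thm:derived_torreli}, while (ii)$\Rightarrow$(i) is the ``trivial converse'' announced in the introduction.

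For (i)$\Rightarrow$(ii): by Example \ref{ex:ECat}(i) each $\Db(X_i)$ is a $2$-Enriques category, and the given semiorthogonal decomposition $\Db(X_i)=\langle\Ku(X_i,\cL_i),\cL_i\rangle$ with $\cL_i$ an orthogonal exceptional collection of $10$ line bundles (this is what \eqref{cond} provides) and $\Ku(X_i,\cL_i)=\langle\cL_i\rangle^{\perp}$ admissible puts us exactly in Setup \ref{setup}, with $Z_i=X_i$, $\cA_{Z_i}=\Db(X_i)$, $m=2$ and $N_i=10$. I would then apply Theorem \ref{thm:gen} to the Fourier--Mukai equivalence $\mathsf{F}$: it produces a Fourier--Mukai functor $\Phi_{\tilde{\cE}}\colon\Db(X_1)\to\Db(X_2)$ whose restriction to $\cA_{Z_1}=\Db(X_1)$ is an equivalence onto $\cA_{Z_2}=\Db(X_2)$. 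Hence $\Db(X_1)\cong\Db(X_2)$, and Theorem \ref{thm:dertordercat} yields $X_1\cong X_2$; since $\Db(X_1)$ already carries a semiorthogonal decomposition satisfying \eqref{cond}, this proves (ii).

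For (ii)$\Rightarrow$(i): fix an isomorphism $f\colon X_1\xrightarrow{\sim}X_2$. As the two surfaces are interchanged by $f$, I may assume that $\Db(X_1)=\langle\Ku(X_1,\cL_1),\cL_1\rangle$ is the semiorthogonal decomposition satisfying \eqref{cond}. Let $\Phi:=\Phi_{\OO_{\Gamma_f}}\colon\Db(X_1)\xrightarrow{\sim}\Db(X_2)$ be the Fourier--Mukai equivalence with kernel the structure sheaf of the graph $\Gamma_f\subseteq X_1\times X_2$; since $\Phi\cong f_*$ it sends line bundles to line bundles. Put $\cL_2:=\{\Phi(L)\mid L\in\cL_1\}$, a collection of $10$ line bundles on $X_2$, and $\Ku(X_2,\cL_2):=\Phi(\Ku(X_1,\cL_1))=\langle\cL_2\rangle^{\perp}$. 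Applying $\Phi$ to the semiorthogonal decomposition of $\Db(X_1)$ gives $\Db(X_2)=\langle\Ku(X_2,\cL_2),\cL_2\rangle$, and \eqref{cond} is inherited since $\RHom(\Phi(L_i),\Phi(L_j))\cong\RHom(L_i,L_j)=\K^{\delta_{ij}}$. Finally $\mathsf{F}:=\Phi|_{\Ku(X_1,\cL_1)}\colon\Ku(X_1,\cL_1)\xrightarrow{\sim}\Ku(X_2,\cL_2)$ is an exact equivalence, and it is of Fourier--Mukai type because its composition with the inclusion $\Ku(X_2,\cL_2)\hookrightarrow\Db(X_2)$ equals $\Phi_{\OO_{\Gamma_f}}|_{\Ku(X_1,\cL_1)}$. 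This establishes (i).

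The only non-formal ingredients are Theorem \ref{thm:gen} (which rests on the classification of spherical objects in Proposition \ref{prop:10Ts}) and the Derived Torelli Theorem \ref{thm:dertordercat}; the rest is bookkeeping, and I do not expect a serious obstacle. The two points that deserve a little care are: applying Theorem \ref{thm:gen} with $\cA_{Z_i}$ taken to be the \emph{whole} of $\Db(X_i)$ --- rather than a proper admissible subcategory --- so that its conclusion is an equivalence of the full derived categories and Theorem \ref{thm:dertordercat} becomes applicable; and, in the converse direction, remembering that (ii) only asserts that \emph{one} of $\Db(X_1),\Db(X_2)$ admits a decomposition satisfying \eqref{cond}, so the decomposition must be transported across $f$ to obtain the other.
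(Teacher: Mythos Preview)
Your proposal is correct and follows essentially the same route as the paper: for (i)$\Rightarrow$(ii) you invoke Theorem \ref{thm:gen} with $\cA_{Z_i}=\Db(X_i)$ and then Theorem \ref{thm:dertordercat}, exactly as the paper does, and for (ii)$\Rightarrow$(i) you transport the decomposition across the isomorphism via a Fourier--Mukai equivalence (the paper uses $f^*$ rather than $f_*$, a purely cosmetic difference). One very minor slip: condition \eqref{cond} does not force the $L_i$ to be \emph{line} bundles, only an orthogonal exceptional collection of bundles---but nothing in your argument actually uses that they have rank one, so this does not affect correctness.
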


\begin{proof}
The proof of the fact that (i) implies (ii) amounts to showing that, under the assumptions in (i), $X_1\cong X_2$. For this, we just apply Theorem \ref{thm:gen} with $$Z_i=X_i,\qquad \AA_{Z_i}=\Db(X_i),\qquad N=10,\;\text{and}\qquad \Ku(Z_i)=\Ku(X_i,\LL_i),\LL_i\rangle,$$
where $\LL_i=\{L_1^i,\dots,L_{10}^i\}$ is as in \eqref{cond}, for $i=1,2$ (see Example \ref{ex:EnriquesCat}). Indeed, the exact equivalence $\mathsf{F}\colon\Ku(X_1,\LL_1)\to\Ku(X_2,\LL_2)$ of Fourier-Mukai type extends to an equivalence $\Db(X_1)\cong\Db(X_2)$. By Theorem \ref{thm:dertordercat}, we deduce that $X_1\cong X_2$.

On the other hand, assume that there exists an isomorphism $f\colon X_1\to X_2$ and a semiorthogonal decomposition for $\Db(X_2)$ satisfying \eqref{cond}. Since $f^*\colon\Db(X_2)\to\Db(X_1)$ is an equivalence, we can take on $\Db(X_1)$ the semiorthogonal decomposition satisfying \eqref{cond} which is the image of the given one on $\Db(X_2)$ under $f^*$. Clearly, such a semiorthogonal decomposition on $\Db(X_1)$ satisfies \eqref{cond}. Finally, $\mathsf{F}:=(f^*)^{-1}|_{\Ku(X_1,\cL_1)}\colon\Ku(X_1,\cL_1)\xrightarrow{\sim}\Ku(X_2,\cL_2)$ is an exact equivalence of Fourier\textendash Mukai type by construction. The argument when $\Db(X_1)$ satisfies such a property is identical using the (quasi-)inverse of $f^*$ instead of $f^*$.
\end{proof}

\begin{Rem}\label{rmk:general3}
One way to generalize Theorem \ref{thm:derived_torreligen} to non-generic Enriques surfaces could be to consider semiorthogonal decompositions as in Proposition \ref{prop:exceptionalcollectionexists} when the surface does not have an ample Fano polarization. Let us briefly sketch the idea which was suggested to us by A.\ Perry. Indeed, one might try to deform $X_1$, $X_2$ and the equivalence $\mathsf{F}\colon\Ku(X_1,\cL_1)\to\Ku(X_2,\cL_2)$ to the generic case. In the generic case, we can then apply Theorem \ref{thm:derived_torreligen} and conclude that the deformed Enriques surfaces are isomorphic. The separatedness of the moduli space of (polarized) Enriques surfaces should then allow us to conclude that $X_1\cong X_2$.

Unfortunately, while in the generic case first order deformations of $X_i$ coincide with first order deformations of the subcategory $\Ku(X_i,\cL)$, this seems not to be the case for nodal non-generic Enriques surfaces. Thus one needs to add some natural assumptions on $\mathsf{F}$. Namely, we need the equivalence to preserve commutative first order deformations of $X_1$ and $X_2$. This will be investigated in future work.
\end{Rem}

\medskip

The proof of Theorem \ref{thm:IKConj} is along the same lines. Indeed, let $Y'$ be the blow-up at the $10$ singular points of a general Artin\textendash Mumford quartic double solid $Y$ with associated Enriques surface $X$. Consider the semiorthogonal decompositions of the $2$-Enriques categories (see Section \ref{subsec:AMdoublesolids} and Example \ref{ex:EnriquesCat})
\[
\AA_{Y'}:=\langle\Ku(Y'),\{G_i\}_{i=1}^{10}\rangle\qquad\Db(X)=\langle\Ku(X,\LL),L_1\dots,L_{10}\rangle
\]
discussed in Section \ref{subsec:AMdoublesolids}. Moreover, we know that there is an exact equivalence $\Ku(X,\LL)\cong\Ku(Y')$ of Fourier\textendash Mukai type (see Theorem \ref{thm:IK} (2)). We then apply Theorem \ref{thm:gen}.

In particular, we get the following more precise version of Theorem \ref{thm:IKConj}.

\begin{Thm}\label{thm:IKConjgen}
Under our assumptions, there is an exact equivalence $\Db(X)\cong\AA_{Y'}$ induced by a Fourier\textendash Mukai functor $\Phi_\EE\colon\Db(X)\to\Db(Y')$ such that
\begin{itemize}
\item[{\rm (1)}] $\Phi_\EE|_{\Ku(X,\cL)}\colon\Ku(X,\cL)\to\Ku(Y'))$ is the exact equivalence in Theorem \ref{thm:IK} (2), and
\item[{\rm (2)}] Up to reordering and shifts, $\Phi_\EE(L_i)=G_i$, for $i=1,\dots,10$.
\end{itemize}
\end{Thm}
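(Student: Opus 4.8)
The plan is to obtain Theorem \ref{thm:IKConjgen} as a direct specialization of Theorem \ref{thm:gen}, so the work consists in checking that both sides fit the framework of Section \ref{sec:genextresult}. First I would set $Z_1 := X$ and $Z_2 := Y'$; both are smooth projective over $\K$ (recall that $Y'$ is smooth, being the blow-up of the nodal threefold $Y$ at its $10$ ordinary double points). On the Enriques side take $\cA_{Z_1} := \Db(X)$, which is a $2$-Enriques category by Example \ref{ex:ECat}(i); since $X$ carries the Reye polarization, which is an ample Fano polarization, Proposition \ref{prop:exceptionalcollectionexists} furnishes the semiorthogonal decomposition $\Db(X) = \langle \Ku(X,\LL), L_1, \dots, L_{10}\rangle$ with $\{L_1,\dots,L_{10}\}$ an orthogonal exceptional collection of line bundles. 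On the double-solid side take $\cA_{Z_2} := \AA_{Y'}$, which is admissible in $\Db(Y')$ by \eqref{eqn:semiAM1}, is a $2$-Enriques category by Example \ref{ex:ECat}(ii) (here one uses Theorem \ref{thm:IK}(1), namely $\mathsf{S}_{\AA_{Y'}} \cong \mathsf{I}[2]$ with $\mathsf{I}$ an involution), and carries the semiorthogonal decomposition $\AA_{Y'} = \langle \Ku(Y'), G_1, \dots, G_{10}\rangle$ with $\{G_1,\dots,G_{10}\}$ orthogonal exceptional by \cite[Corollary 3.8 and Lemma 3.12]{KI15}. Thus both $\cA_{Z_i}$ fall under Setup \ref{setup} with $m = 2$ and $N_1 = N_2 = 10$.

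Next I would feed in the geometric equivalence: by Theorem \ref{thm:IK}(2) there is an exact equivalence $\mathsf{F} \colon \Ku(X,\LL) \xrightarrow{\sim} \Ku(Y')$ of Fourier\textendash Mukai type. Applying Theorem \ref{thm:gen} to this data yields a Fourier\textendash Mukai functor $\Phi_{\tilde{\EE}} \colon \Db(X) \to \Db(Y')$ such that $\Phi_{\tilde{\EE}}|_{\Ku(X,\LL)} \cong \mathsf{F}$, such that $\Phi_{\tilde{\EE}}$ restricts to an equivalence $\Db(X) = \cA_{Z_1} \xrightarrow{\sim} \cA_{Z_2} = \AA_{Y'}$, and such that $\Phi_{\tilde{\EE}}(L_i) \cong G_i$ up to reordering and shifts. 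Writing $\EE := \tilde{\EE}$, this gives precisely the desired exact equivalence $\Db(X) \cong \AA_{Y'}$ together with assertions (1) and (2) of the statement; in particular Theorem \ref{thm:IKConj}, hence Conjecture \ref{conj:IK}, follows.

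The point is that there is no serious obstacle left at this stage, since everything substantive has already been established in Theorem \ref{thm:gen} — i.e.\ in the extension Propositions \ref{prop:extension} and \ref{prop:extension2} and in the classification of $3$-spherical objects in Proposition \ref{prop:10Ts} (note that $2m - 1 = 3$ here). The only genuinely non-formal input specific to this application is the verification that $\AA_{Y'}$ is a $2$-Enriques category in the sense of Definition \ref{def:EnriquesCat}, which rests on the identification of its Serre functor in Theorem \ref{thm:IK}(1) proved in \cite{KI15}; once that and the Fourier\textendash Mukai-type equivalence on Kuznetsov components (Theorem \ref{thm:IK}(2)) are granted, Theorem \ref{thm:IKConjgen} is a one-line consequence of Theorem \ref{thm:gen}.
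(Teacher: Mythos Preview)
Your proposal is correct and follows essentially the same approach as the paper: set $Z_1=X$, $Z_2=Y'$, verify that $\Db(X)$ and $\AA_{Y'}$ are $2$-Enriques categories fitting Setup \ref{setup} with their respective orthogonal exceptional collections of length $10$, feed in the Fourier\textendash Mukai equivalence $\Ku(X,\LL)\cong\Ku(Y')$ from Theorem \ref{thm:IK}(2), and apply Theorem \ref{thm:gen}. The paper's argument is terser but identical in substance.
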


In a sense, this result proves a stronger version of Conjecture \ref{conj:IK} since the equivalence we construct is automatically compatible with the semiorthogonal decompositions of $\Db(X)$ and $\cA_{Y'}$.


\bigskip

{\small\noindent{\bf Acknowledgements.} This work began in a problem-solving working group as part of the \emph{Workshop on  ``Semiorthogonal decompositions, stability conditions and sheaves of categories''} held at the University of Toulouse in 2018. It is our pleasure to thank this institution and the organizers of the workshop for the very stimulating atmosphere. Marcello Bernardara, Daniele Faenzi, Sukhendu Mehrotra, and Franco Rota were part of our working group in Toulouse. We would like to warmly thank them for the very interesting discussions we had with them. We are also grateful to Arend Bayer, Alberto Canonaco, Andreas Hochenegger, Alexander Kuznetsov, Christian Liedtke, Emanuele Macr\`i, Riccardo Moschetti, Alex Perry, Giorgio Scattareggia and Sofia Tirabassi for the many suggestions and comments we received from them. Finally, we would like to thank the anonymous referee whose suggestions and comments helped us greatly improve the exposition in the paper.}


\end{document}